\newcommand{\scT}{\mathscr{T}}
\newcommand{\LL}{\mathbb{L}}
\newcommand{\cR}{\mathcal{R}}
\newcommand{\RR}{\mathbb{R}}
\newcommand{\hor}{\mathrm{hor}}
\newcommand{\PP}{\mathbb{P}}
\newcommand{\fp}{\mathfrak{p}}
\newcommand{\TT}{\mathbb{T}}
\newcommand{\cT}{\mathcal{T}}
\newcommand{\EE}{\mathbb{E}}
\newcommand{\fluc}{\mathrm{Fluc}}
\newcommand{\NN}{\mathbb{N}}
\newcommand{\ZZ}{\mathbb{Z}}
\newcommand{\fq}{\mathfrak{q}}
\newcommand{\sqr}{\mathrm{Square}}
\newtheorem{theorem}{Theorem}
\newtheorem{lemma}[theorem]{Lemma}
\newtheorem{conjecture}[theorem]{Conjecture}
\newtheorem{question}[theorem]{Question}
\newtheorem{proposition}[theorem]{Proposition}
\theoremstyle{definition}
\newcommand{\ind}{\mathbbm{1}}
\newcommand{\wgt}{\mathrm{Wgt}}
\def\Var{{\rm Var}}
\newcommand{\cO}{\mathcal{O}}
\newcommand{\Cov}{\mathrm{Cov}}
\newcommand{\boxx}{\mathrm{Box}}
\newcommand{\slope}{\mathrm{slope}}
\newcommand{\0}{\mathbf{0}}
\newcommand{\n}{\mathbf{n}}
\begin{document}
\title[]{Near-existence of bigeodesics in dynamical exponential last passage percolation}%

\author[]{Manan Bhatia}
\address{Manan Bhatia, Department of Mathematics, Massachusetts Institute of Technology, Cambridge, MA, USA}
\email{mananb@mit.edu}
\date{}
\makeatletter
\let\thefootnote\relax
\footnotetext{This preprint is one of two works that together replace the earlier preprint [arXiv:2504.12293v1]. The companion article \cite{Bha25+} analyses geodesic switches and exceptional times in dynamical Brownian last passage percolation.}
\begin{abstract}
  It is believed that, under very general conditions, bi-infinite geodesics (or bigeodesics) do not exist for planar first and last passage percolation (LPP) models. However, if one endows the model with a natural dynamics, thereby gradually perturbing the geometry, then it is plausible that there could exist a non-trivial set $\scT$ of exceptional times at which such bigeodesics exist. For dynamical exponential LPP, we show that $\scT$ is ``very close'' to being non-trivial; namely, we obtain an $\Omega( 1/\log n)$ lower bound on the probability that there exists a random time $t\in [0,1]$ at which a non-trivial geodesic of length $n$ passes through the origin at its midpoint; note that if the above probability were $\Omega(1)$, then it would imply the non-triviality of $\scT$. We conjecture that, even if $\scT\neq \emptyset$, it a.s.\ has Hausdorff dimension exactly zero. 
\end{abstract}
\maketitle
\tableofcontents
\section{Introduction}
\label{sec:intro}

First passage percolation (FPP) is a natural lattice model of random geometry where the Euclidean metric is distorted by i.i.d.\ noise. To define it on the planar lattice, one considers a family of i.i.d.\ random variables $\omega=\{\omega_{\{x,y\}}\}_{x\sim y\in \ZZ^2}$, where $x\sim y$ denotes adjacency in $\ZZ^2$ and simply defines the length of a lattice path $\gamma$ as the total weight of all the edges it utilises. Thereafter, the distance between points is defined as the infimum over the length of all lattice paths between the two points, and any path which attains this minimum is called a geodesic. While the above model is simple to define, it exhibits rich mathematical structure-- indeed, it is believed that for a wide class of weight distributions of the vertex weights, planar FPP is in the Kardar-Parisi-Zhang (KPZ) \cite{KPZ86} universality class, which is a class of random growth models that are expected to share the same universal behaviour.

While geodesics between any two points always exist, a long-standing question for FPP is whether any ``bigeodesics'' exist, where the latter refers to a bi-infinite lattice path whose every finite segment is a geodesic. The first reference to the above question appears to be \cite{Kes86}, where it is attributed to Furstenberg. It is believed that under very mild restrictions on the weight distribution, bigeodesics a.s.\ do not exist in FPP. In fact, the question of bigeodesics in FPP can be formulated in a completely different context-- that of the disordered Ising ferromagnet. The latter can be formally defined as the planar statistical physics model corresponding to the Hamiltonian $H(\sigma)=-\sum_{x\sim y\in \ZZ^2}\eta_{\{x,y\}}\sigma_x\sigma_y$, where $\eta=\{\eta_{\{x,y\}}\}_{x\sim y}$ is an i.i.d.\ positive noise field and $\sigma_x\in \{+1,-1\}$ for all $x\in \ZZ^2$. It turns out that there is a direct correspondence turning an instance of FPP into an instance of the latter and in this correspondence, bigeodesics correspond to non-constant ground states, where we note that a $\sigma$ which is globally constant is trivially a ground state since $\eta$ is positive. In this context, the conjecture states that for very general coupling constant distributions, there a.s.\ do not exist any non-constant ground states for the disordered Ising ferromagnet. Unfortunately, as is the case for most questions regarding FPP, the above question remains open. %
We refer the reader to the survey \cite{ADH17} for a discussion on the question of the non-existence of bigeodesics and of the connection to the disordered Ising ferromagnet. We note that while this paper is concerned with the planar case, it is also possible to consider high dimensional versions of the question discussed above, where one now looks at the disordered Ising ferromagnet on $\ZZ^d$ for $d\geq 3$ and the corresponding FPP question now involves ``minimal surfaces'' instead of bigeodesics and recently, there have been several interesting works in this direction \cite{BGP23,DEP24,DG23}.

Instead of studying the `static' model of planar FPP, one might wonder what happens if one dynamically evolves the noise $\omega=\{\omega_{\{x,y\}}\}_{x\sim y\in \ZZ^2}$-- how does this evolve the associated random geometry? For instance, a natural dynamics is to simply consider the noise field $\omega^t=\{\omega^t_{\{x,y\}}\}_{x\sim y\in \ZZ^2}$ obtained by updating the noise field $\omega$ via independent resampling according to independent exponential clocks associated to each vertex. For the corresponding planar disordered Ising ferromagnet, the above corresponds to an independent resampling dynamics of the coupling constant field $\eta$. Assuming that the conjecture from the previous paragraph on the non-existence of bigeodesics in static FPP is indeed true, one might ask the following question, and this question is the guiding force behind this work.

\begin{question}
  \label{que:1.1}
Does dynamical first passage percolation have any exceptional times at which bigeodesics exist? Equivalently, does the dynamical disordered Ising ferromagnet possess any exceptional times at which non-constant ground states exist? If such exceptional times do exist, how frequently do they occur, as measured by their Hausdorff dimension? %
\end{question}

For us, an important motivation for considering the above question is the analogous study of noise sensitivity \cite{BKS99} and exceptional phenomena in the context of critical percolation, a classical and very well-studied model. From the work of \cite{Har60,Kes80}, it has long been known that at criticality, for critical percolation on the square lattice, there a.s.\ does not exist any infinite cluster. In an exciting sequence of works \cite{SS10, GPS10, GPS18}, a quantitative study of noise-sensitivity in critical percolation was carried out, where noise-sensitivity refers to the phenomenon in which the resampling of a microscopic amount of noise leads to a macroscopic change in the connectivity properties. Using this, it was further established that for dynamical critical site percolation on the triangular lattice, there exist exceptional times at which a giant cluster exists and that the set of such exceptional times a.s.\ has Hausdorff dimension $31/36$. We refer the reader to \cite{GS15} for an exposition discussing the above line of research and the background discrete Fourier analytic techniques.

Now, even for FPP, it is expected that one has noise-sensitivity in the sense that resampling a small number of edge weights should lead to a macroscopic change in the geodesic structure-- while this has not been shown for lattice FPP, such results have been established for the closely related last passage percolation models for which much more is now known \cite{Cha14,GH24,ADS24}, and which we shall shortly discuss. Thus, in view of the above expected noise-sensitivity of FPP, one might wonder whether bigeodesics, which are not expected to exist for static FPP can in fact exist at some exceptional times in dynamical FPP, and this is the content of Question \ref{que:1.1}.

However, considering that even the more basic question of the non-existence of bigeodesics has not been answered yet for static FPP, Question \ref{que:1.1}, as stated, does not seem tractable at the moment. Thus, in this paper, we in fact do not work with FPP but instead consider Question \ref{que:1.1} in the context of integrable last passage percolation models, and the most canonical such model is exponential last passage percolation, which we now define. Let $\{\omega_{z}\}_{z\in \ZZ^2}$ be a field of i.i.d.\ $\exp(1)$ random variables. Now, for any two points $p\leq q\in \ZZ^2$, by which we mean that the inequality holds coordinate-wise, and any lattice path $\gamma$ from $p$ to $q$ which takes only up and right steps, we define $\wgt(\gamma)=\sum_{z\in \gamma}\omega_z$ and finally, we define the last passage time $T_p^q=\max_{\gamma: p\rightarrow q}\wgt(\gamma)$, where the maximum is over all ``up-right'' paths $\gamma$ from $p$ to $q$. Further, there is a.s.\ a unique path $\gamma$ attaining the above maximum, and this path is called the geodesic from $p$ to $q$ and is denoted by $\Gamma_p^q$. %

While FPP is believed to be in the KPZ universality class, exponential LPP is known to be so and in particular, it is expected that, under mild restrictions on the weight distribution, the scaling limit of FPP is the also the directed landscape \cite{DOV18}, the scaling limit of exponential LPP \cite{DV21}. In contrast with FPP, much is known about exponential LPP owing to its integrability. For instance, in the case of exponential LPP, the question of the non-existence of bigeodesics has been settled. Indeed, it was established in \cite{BHS22} that non-trivial bigeodesics do not exist in exponential LPP, where trivial bigeodesics refer to lattice paths which are either completely vertical or completely horizontal; note that such paths are always bigeodesics due to the directed nature of LPP and thus it is only interesting to consider non-trivial bigeodesics. As a result of the above, if we consider a dynamical version of exponential LPP, where we have an independent exponential clock at each vertex $z\in \ZZ^2$, and we simply independently resample the weight $\omega_z$ when the clock at $z$ rings, then we can consider the following analogue of Question \ref{que:1.1}.

\begin{question}
  \label{que:1}
  Does dynamical exponential LPP have exceptional times $t$ at which non-trivial bigeodesics exist? If so, what is the Hausdorff dimension of this set?
\end{question}

While working with dynamical exponential LPP, we shall use $T^t$ to denote the LPP at time $t\in \RR$ and for any points $p\leq q\in \ZZ^2$, we shall use $T_p^{q,t}$ to denote the passage time from $p$ to $q$ for the LPP at time $t$. Further, we shall use $\Gamma_p^{q,t}$ to denote the geodesic from $p$ to $q$; it is not difficult to see that $\Gamma_p^{q,t}$ is a.s.\ unique simultaneously for all $t\in \RR$ and all $p\leq q\in \ZZ^2$.

\subsection{Main result}
\label{sec:main-results}
We are now ready to state the main result of this work. While we have not been able to resolve Question \ref{que:1} in this work, we show that ``exceptional times are very close to existing''-- namely, we prove the following quantitative subpolynomial bound on the presence of times admitting unusually long geodesics. In the following and throughout the paper, we use the functions $\phi(x,y)=x+y,\psi(x,y)=x-y$ for $x,y\in \RR$; also, we shall often use $\0,\n$ to denote the points $(0,0),(n,n)\in \ZZ^2$.

\begin{figure}
  \centering
  \includegraphics[width=0.4\linewidth]{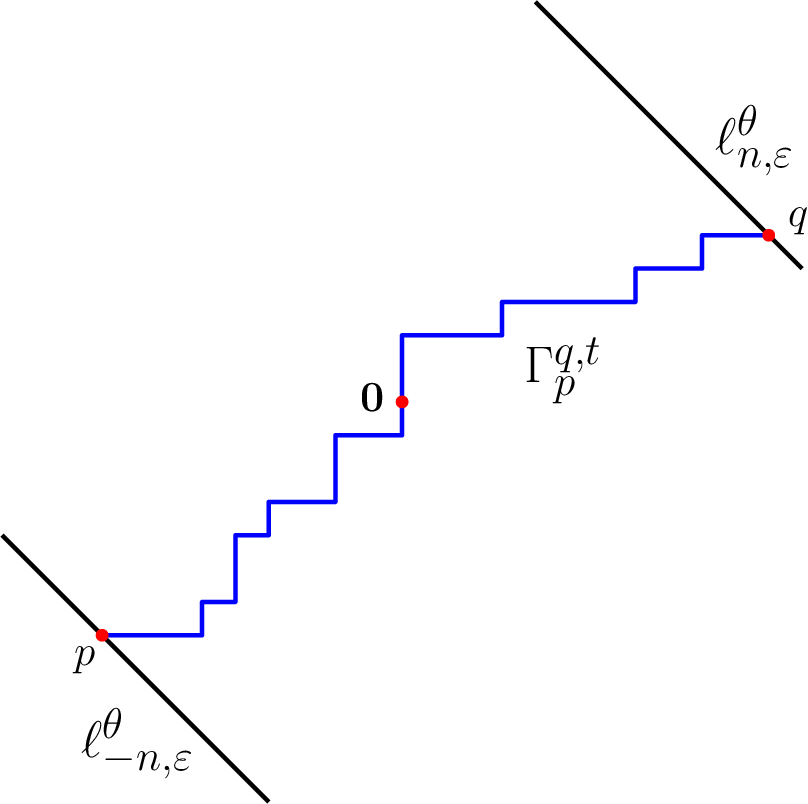}
  \caption{Statement of Theorem \ref{thm:4}: There is at least a $C(\log n)^{-1}$ probability of there existing a $t\in [0,1]$ for which there is a geodesic $\Gamma_p^{q,t}$ between two points $p,q$ on the linear length segments $\ell_{-n,\varepsilon}^\theta, \ell_{n,\varepsilon}^\theta$ which additionally satisfies $\0\in \Gamma_p^{q,t}$.}
  \label{fig:lb-thm}
\end{figure}
\begin{theorem}
  \label{thm:4}
  Consider dynamical exponential LPP and fix $\theta\in (-1,1),\varepsilon>0$. For $n\in \ZZ$, let $\ell_{n,\varepsilon}^\theta$ denote the line segment defined by $\ell_{n,\varepsilon}^\theta=\{p: \phi(p)=n, |\psi(p)-\theta n|\leq \varepsilon |n|\}$. Then there exists a constant $C$ such that for all large enough $n\in \NN$, we have
  \begin{equation}
    \label{eq:336}
    \PP(\exists t\in [0,1]\textrm{ and points } p\in \ell_{-n,\varepsilon}^\theta, q\in \ell_{n,\varepsilon}^\theta \textrm { with } \0\in \Gamma_{p}^{q,t})\geq C(\log n)^{-1}.
  \end{equation}
\end{theorem}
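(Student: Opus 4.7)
The plan is a second-moment (Paley--Zygmund) argument applied to
\begin{equation*}
X := \int_0^1 \ind_{E_t}\,dt, \qquad E_t := \{\exists\, p\in \ell_{-n,\varepsilon}^\theta,\ q\in \ell_{n,\varepsilon}^\theta : \0 \in \Gamma_p^{q,t}\}.
\end{equation*}
Since the event in \eqref{eq:336} is exactly $\{X>0\}$, it suffices to bound $\EE[X]$ from below and $\EE[X^2]$ from above and then invoke $\PP(X>0)\ge \EE[X]^2/\EE[X^2]$.

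By stationarity of the dynamics, $\EE[X] = \PP(E_0)$ is the static probability. The first subgoal would be to show $\PP(E_0)\ge c > 0$, which is a pure static exponential LPP statement asserting that the geodesic tree at $\0$ admits a branch with source in $\ell_{-n,\varepsilon}^\theta$ and sink in $\ell_{n,\varepsilon}^\theta$. The standard inputs for this are: the $n^{2/3}$ transversal-fluctuation scale is dwarfed by the macroscopic segment length $\varepsilon n$; the backward/forward geodesic trees at $\0$ carry positive density of branches in every interior direction of the limit shape (in particular the chosen $\theta$); and coalescence allows one to match a backward branch to $\ell_{-n,\varepsilon}^\theta$ with a forward branch to $\ell_{n,\varepsilon}^\theta$ in such a way that the concatenated path through $\0$ is genuinely the optimal path between its endpoints, i.e.\ $T_p^{q,0}=T_p^{\0,0}+T_{\0}^{q,0}$.

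The main technical step is the bound $\EE[X^2] = \iint_{[0,1]^2}\PP(E_s\cap E_t)\,ds\,dt \le C \PP(E_0)^2 \log n$. The natural plan is to decompose $[0,1]^2$ dyadically by the time gap: for each $k\in\{0,\ldots,K\}$ with $K\sim \log_2 n$, one estimates the contribution from the slab $|s-t|\in [2^{-k-1},2^{-k}]$. On each slab the goal is a near-independence bound $\PP(E_s\cap E_t)\le C\PP(E_0)^2$, using that in time $|s-t|$ a positive fraction of the $\Theta(n)$ weights determining the geodesic has been resampled, partially decorrelating the geodesic events at times $s$ and $t$. Summing over the $O(\log n)$ scales and plugging into Paley--Zygmund gives $\PP(X>0)\ge C(\log n)^{-1}$.

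The main obstacle is precisely the per-slab correlation bound, which amounts to a quantitative noise-sensitivity statement for the geodesic event under the independent-clock resampling dynamics. This is delicate because $E_0$ depends on $\Theta(n)$ continuous weights through a Boolean-of-a-max structure; a plausible route is to reduce $\PP(E_s\cap E_t)$ to the overlap between the backward/forward geodesic structures at $\0$ at the two times and invoke the quantitative noise-sensitivity technology developed for exponential LPP in \cite{Cha14,GH24,ADS24}. The logarithmic loss in the final answer reflects the difficulty of propagating near-independence down to arbitrarily short time gaps, and it is precisely the gap between this $\Omega((\log n)^{-1})$ bound and the $\Omega(1)$ bound that would, by a compactness/tightness argument, yield non-triviality of the set $\scT$ of exceptional bigeodesic times.
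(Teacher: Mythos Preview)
Your proposal has a fatal circularity in the first-moment step. You claim $\PP(E_0)\ge c>0$ as the ``first subgoal'', but observe that $E_0\subseteq\{X>0\}$; if $\PP(E_0)\ge c$ were known, the theorem would follow immediately with the stronger bound $c$ in place of $C(\log n)^{-1}$, and the whole second-moment machinery would be unnecessary. In fact the paper explicitly remarks (right after the theorem) that upgrading the $(\log n)^{-1}$ to $\Omega(1)$ is open and would essentially resolve Question~\ref{que:1}. There is no known proof that the static probability of $\0$ lying on \emph{some} geodesic between the two macroscopic segments is bounded away from zero; the coalescence heuristic you sketch does not yield this, because the set of midline points visited by the family $\{\Gamma_p^q:p\in\ell_{-n,\varepsilon}^\theta,\,q\in\ell_{n,\varepsilon}^\theta\}$ is a random subset of an interval of length $\Theta(n^{2/3})$, not the whole interval.

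The paper's argument avoids this obstacle by \emph{not} using the indicator $\ind_{E_t}$. Instead it works with the additive statistic
\[
X_n=\sum_{|j|,|k|\le\varepsilon n^{1/3}}\int_0^1 \ind\big(\fp_n\in\Gamma_{u_j}^{v_k,t}\big)\,dt,
\]
where $u_j,v_k$ are $n^{2/3}$-spaced points on the two segments and $\fp_n$ is an independent uniform point in an $O(n)$-box around $\0$. This makes the first moment a straightforward calculation: $\EE X_n=\Theta(n^{-1/3})$. The second moment is then controlled not by a time-scale decomposition and noise sensitivity, but by the dynamical Russo--Margulis formula (Lemma~\ref{lem:26}), which converts $\int_0^1\EE|\Gamma_{u_{j_1}}^{v_{k_1},0}\cap\Gamma_{u_{j_2}}^{v_{k_2},t}|\,dt$ into the static covariance $\Cov(T_{u_{j_1}}^{v_{k_1}},T_{u_{j_2}}^{v_{k_2}})$. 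The work then reduces to proving the covariance decay $|\Cov(T_{u_j}^{v_k},T_{u_j}^{v_{k+\Delta}})|\le Cn^{2/3}|\Delta|^{-2}$ (Proposition~\ref{prop:36}), and the $\log n$ arises from the harmonic sum $\sum_{|\Delta|\le n^{1/3}}|\Delta|^{-1}$, not from a dyadic time decomposition. Finally one derandomizes $\fp_n\rightsquigarrow\0$ by translation invariance. Your per-slab noise-sensitivity bound $\PP(E_s\cap E_t)\le C\PP(E_0)^2$ is a different, and currently unavailable, input.
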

We refer the reader to Figure \ref{fig:lb-thm}. The crucial aspect of Theorem \ref{thm:4} is that the factor $(\log n)^{-1}$ decays subpolynomially in $n$ and thus the bound above is better than any lower bound of the form $n^{-\alpha+o(1)}$. We note that if one upgrades the above $(\log n)^{-1}$ lower bound to one that does not decay with $n$, then it is plausible that this (combined with an ergodicity argument) would answer Question \ref{que:1} in the affirmative. In fact, even if bigeodesics as in Question \ref{que:1} do exist, we expect that they ``barely'' do so, and this is made precise by the following conjecture.%
\begin{conjecture}
  \label{conj:1}
Fix a dynamical LPP model and consider the set of times $\scT$ at which non-trivial bigeodesics exist. Then the set $\scT$ almost surely has Hausdorff dimension $0$.
\end{conjecture}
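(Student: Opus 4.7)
The natural approach is a first-moment covering argument. For $\alpha>0$, partition $[0,1]$ into $\lceil\delta^{-1}\rceil$ subintervals of length $\delta$; letting $N_\delta$ denote the number of such intervals meeting $\scT$, we have $\cH^\alpha(\scT\cap[0,1])\leq \delta^\alpha N_\delta$ and $\EE[N_\delta]\leq \delta^{-1}p_\delta$ with
\[
p_\delta := \PP\bigl(\scT\cap[0,\delta]\neq \emptyset\bigr).
\]
Applying Borel--Cantelli along the dyadic sequence $\delta=2^{-k}$, it therefore suffices to establish the sub-polynomial estimate $p_\delta = \delta^{1-o(1)}$ as $\delta\to 0$ in order to conclude $\dim_H\scT=0$ almost surely.

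To bound $p_\delta$, the plan is as follows. If some $t\in[0,\delta]$ admits a non-trivial bigeodesic with asymptotic direction $\theta\in(-1,1)$, then for every $n$ there must exist $p\in\ell_{-n,\varepsilon}^\theta$ and $q\in\ell_{n,\varepsilon}^\theta$ with $\0\in\Gamma_p^{q,t}$; denote this event by $A_n^\theta(t)$. Discretising $\theta$ over a fine mesh and taking a union bound reduces matters to estimating $\PP\bigl(\exists t\in[0,\delta]:A_n^\theta(t)\bigr)$ for a single $\theta$, and then optimising over the scale $n=n(\delta)$. At a fixed $t$, the static probability of $A_n^\theta(t)$ decays polynomially in $n$ via transversal fluctuation and coalescence inputs. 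The dynamical correction -- the additional probability incurred by allowing $t$ to range over $[0,\delta]$ -- is controlled by the expected number of ``switches'' of a length-$n$ geodesic in unit time, which is bounded by $n^{5/3+o(1)}$ in the Brownian LPP setting of this paper, with an analogous bound expected in exponential LPP.

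The main obstacle lies here. Combining the above yields, schematically,
\[
\PP\bigl(\exists t\in[0,\delta]:A_n^\theta(t)\bigr)\lesssim n^{-c}+\delta\cdot n^{a},
\]
which upon optimising gives only a strictly polynomial bound $p_\delta\lesssim \delta^{c/(a+c)}$. This reproduces a weakened form of the dimension bound $\dim\scT\leq 1/2$ already established in this paper, but is far from the $\delta^{1-o(1)}$ needed for Conjecture \ref{conj:1}. Breaking past this polynomial barrier appears to require a genuine \emph{multi-scale decorrelation} input: along an exponentially spaced sequence of scales $n_k$, one would need to show that the events $A_{n_k}^\theta(t)$ are sufficiently close to independent that their joint probability decays faster than any fixed polynomial in $n_k$. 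This parallels the multi-scale arm-event analysis underlying the noise-sensitivity theory of critical planar percolation \cite{GPS10}, but the directed and non-monotone nature of LPP geodesics makes such a decoupling substantially more delicate, and at present it lies out of reach.
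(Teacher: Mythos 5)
This statement is Conjecture~\ref{conj:1} of the paper: it is left \emph{open}, the paper proving only the weaker bounds $\dim\scT\leq 1/2$ and $\dim\scT^\theta=0$ (Theorems~\ref{thm:3} and \ref{thm:5}) together with a heuristic for the conjecture in Section~\ref{sec:are-upper-bounds}. So there is no proof to match yours against, and your proposal --- candidly --- does not close the gap either. One remark on your reduction before the main point: as literally written, $p_\delta=\PP(\scT\cap[0,\delta]\neq\emptyset)$ need not be small, since a bigeodesic may pass anywhere in the plane; you must first localize, replacing $\scT$ by the set of times at which a bigeodesic passes through a fixed coarse cell containing $\0$, and then use countable stability of Hausdorff dimension over cells and over directions. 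This is exactly how the paper reduces Theorems~\ref{thm:3} and \ref{thm:5}, and your events $A_n^\theta(t)$ implicitly perform this localization anyway, so the issue is cosmetic.

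The genuine gap is the one you name, and your diagnosis of its source agrees with the paper's own discussion in Section~\ref{sec:heur-disc-why}: the inequality \eqref{eq:544} bounding the hitset by the initial hitset plus the switch count is lossy, because during a ``period of instability'' the geodesic may oscillate many times between two configurations, inflating $\switch$ without enlarging $\hitset$; the resulting bound $n^{5/3+o(1)}(t-s)$ from Theorem~\ref{prop:30} can never yield dimension below $1/2$. Where you diverge from the paper is in the proposed missing ingredient. You ask for a multi-scale decorrelation of the arm-type events $A_{n_k}^\theta(t)$ across an exponentially spaced sequence of scales, in the spirit of the percolation analysis of \cite{GPS10}. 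The paper's heuristic instead targets a \emph{single-scale} strengthening, namely the hitset bound \eqref{eq:575}: over the chaos timescale $n^{-1/3}$ the union of all geodesics between on-scale segments should remain of linear size, and feeding this into the existing first-moment covering machinery gives dimension $0$ directly, with no cross-scale independence required. The paper suggests approaching \eqref{eq:575} via a multi-peak analysis as in \cite{GH24} and isolates the missing technical input as superpolynomial concentration of $T_{\0}^{\n,t}-T_{\0}^{\n,0}$ for $t\leq n^{-1/3}$ (its Question~\ref{ques:3}). Finally, note that Theorem~\ref{thm:4} gives an $\Omega(1/\log n)$ lower bound sitting just below the first-moment prediction; at exponent $0$ the subpolynomial corrections decide everything, which is precisely why no covering argument of the type you outline can, by itself, be expected to settle the conjecture.
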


In fact, as we shall see in the heuristic argument for the above conjecture in Section \ref{sec:are-upper-bounds}, we expect the value $0$ to be ``tight'' in the sense that for any fixed $K>0$, we expect to have
\begin{equation}
  \label{eq:337}
  \PP(\exists t\in [0,\varepsilon], \theta \in [K^{-1},K]\textrm{ and a }  \theta \textrm{-directed bigeodesic } \Gamma^{t}\ni \0)=\varepsilon^{1-o(1)},
\end{equation}
where we say that an unbounded set $A\subseteq \RR^2$ is $\theta$-directed if for any sequence $(x_n,y_n)\in A$ with $|y_n|\rightarrow \infty$, we have $\lim_{n\rightarrow \infty}x_n/y_n=\theta$.

We note that there are quite a few settings where the first moment analysis of an exceptional set yields an exponent corresponding precisely to dimension $0$-- in such settings it is usually very difficult to prove/disprove the existence of such exceptional points or even prove any quantitative estimates about such points, since one can no longer ignore subpolynomial errors.

  \paragraph{\textbf{Notational comments}} For $p\neq q\in \RR^2$, we use $\LL_p^q$ to denote the line joining $p$ and $q$. Frequently, for $a<b\in \RR$, we shall work with the discrete intervals $[\![a,b]\!]=[a,b]\cap \ZZ$. Often, we shall use the boldface letters $\0,\mathbf{m},\mathbf{n}$ to denote $(0,0),(m,m),(n,n)\in \RR^2$. For points $p=(x_1,y_1),q=(x_2,y_2)\in \RR^2$, we shall write $p\leq q$ if $x_1\leq x_2$ and $y_1\leq y_2$. For points $p=(x_1,y_1)\neq (x_2,y_2)\in \RR^2$, we define $\slope(p,q)=\frac{x_2-x_1}{y_2-y_1}$: note that this is the inverse of the usual definition of the slope of a line. For a finite set $A\subseteq \ZZ^2$, we shall use $|A|$ to denote the cardinality of $A$. For $\theta\in \RR$, an unbounded set $A\subseteq \RR^2$ is said to be $\theta$-directed if for any sequence $(x_n,y_n)\in A$ with $|y_n|\rightarrow \infty$, we have $\lim_{n\rightarrow \infty}x_n/y_n=\theta$.

\paragraph{\textbf{Acknowledgements}} We thank Riddhipratim Basu for the discussions. The author acknowledges the partial support of the NSF grant DMS-2153742 and the MathWorks fellowship.
\section{Model definitions and background}
\label{sec:prelim}
\subsection{Dynamical Exponential last passage percolation}
\label{sec:model-exp}
We start with $\ZZ^2$ endowed with a field $\{\omega_{z}\}_{z\in \ZZ^2}$ of i.i.d.\ $\exp(1)$ random variables. Now, for each up-right path $\gamma$ between points $p\leq q\in \ZZ^2$, we define
\begin{equation}
  \label{eq:334}
  \wgt(\gamma)=\sum_{z\in \gamma}\omega_z,
\end{equation}
We define the last passage time
\begin{equation}
  \label{eq:335}
  T_p^q=\max_{\gamma: p\rightarrow q}\wgt(\gamma),
\end{equation}
where the maximum is over all up-right paths $\gamma$ from $p$ to $q$. It is easy to see that almost surely, for any $p\leq q$ as above, there exists a unique path $\Gamma_p^q$ attaining the above maximum and this path is called the geodesic from $p$ to $q$. Sometimes, we shall think of the geodesic $\Gamma_p^q$ as a function, that is, for $r\in [\![\phi(p),\phi(q)]\!]$, we shall use $\Gamma_p^q(r)$ to denote $\psi(z)$, where $z$ is the unique point in $\ZZ^2$ with $\phi(z)=r$ and $z\in \Gamma_p^q$. This completes the discussion of the model of (static) exponential LPP, and we now describe the dynamics that we shall work with.

We start with the field $\omega^0=\{\omega^0_z\}_{z\in \ZZ^2}\stackrel{d}{=}\omega$. Now, we attach an independent exponential clock of rate $1$ to each vertex $z\in \ZZ^2$; if the clock corresponding to $z$ rings at time $t$, then we independently resample the value $\omega^t_z$. The above defines the process $\{\omega^t_z\}_{z\in \ZZ^2,t\geq 0}$, and we note that this is stationary in $t$. Finally, by using a Kolmogorov extension argument, we can extend the above definition to obtain the process $\{\omega^t_z\}_{z\in \ZZ^2,t\in \RR}$.

With the above at hand, we define the LPP $T^t$ by using the definition \eqref{eq:335} with the environment $\omega$ now replaced by $\omega^t$. We can correspondingly define the geodesics $\Gamma_{p}^{q,t}$ for points $p\leq q\in \ZZ^2$ associated to the LPP $T^t$.

\subsection{Bigeodesics and their non-existence in static exponential LPP}
\label{sec:bigeod}
Recall that in static exponential LPP, geodesics a.s.\ exist between any two points. As opposed to finite geodesics which are analogues of line segments in Euclidean geometry, one might also consider bi-infinite geodesics, or more simply, bigeodesics. In exponential LPP, a bigeodesic is a bi-infinite up-right path $\gamma$ such that every segment of it is a finite geodesic. Note that any entirely horizontal or vertical bi-infinite lattice path (resp.\ staircase) is trivially a bigeodesic. It is believed that under very general conditions, non-trivial bigeodesics a.s.\ do not exist in last passage percolation models. For exponential LPP, this was proved in \cite{BHS22} (see also \cite{BSS20} for another proof).
\begin{proposition}[{\cite[Theorem 1]{BHS22}, \cite[Theorem 1.1]{BBS20}}]
  \label{prop:27}
  Almost surely, there exist no non-trivial bigeodesics in static exponential LPP.
\end{proposition}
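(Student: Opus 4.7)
The plan is to argue by contradiction, so suppose that with positive probability there exists a non-trivial bigeodesic; by translation invariance and ergodicity of the resampling-free LPP environment, we may then assume positive probability of a non-trivial bigeodesic passing through $\0$. Any such bigeodesic $\gamma$ decomposes at $\0$ into two non-trivial semi-infinite geodesics $\gamma^+, \gamma^-$ (going through $+\infty$ and $-\infty$ respectively). By the $\theta$-directedness result cited from \cite{FP05}, each of $\gamma^\pm$ is $\theta^\pm$-directed for some $\theta^\pm \in (0,\infty)$, and by ergodicity I would first reduce to considering pairs $(\theta^-,\theta^+)$ lying in some fixed compact subset of $(0,\infty)^2$.

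The first main step would be to rule out the \emph{coalescing} case $\theta^+ = \theta^-=\theta$. The key input is the well-known coalescence/uniqueness theorem for semi-infinite geodesics in exponential LPP: for any fixed rational $\theta \in (0,\infty)$, almost surely all $\theta$-directed semi-infinite geodesics coalesce, and moreover, almost surely no vertex simultaneously emits a $\theta$-directed semi-infinite geodesic in both the up-right and the down-left directions. Taking a countable dense set of rational $\theta$ and using monotonicity/interleaving of geodesics in the direction parameter would upgrade this to rule out any matched pair $(\theta,\theta)$.

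The harder part is the \emph{non-coalescing} case $\theta^-\neq \theta^+$. Here I would use the planar tree structure of the family of semi-infinite geodesics in a fixed direction together with their monotonicity in the starting point. If a bigeodesic in directions $(\theta^-,\theta^+)$ existed through $\0$, then by translation invariance one would get a translation-invariant random family of such bigeodesics in the plane; a Burton--Keane style counting / entropy argument, or alternatively a direct ``crossing of two coalescing trees'' argument, would show that this is incompatible with the a.s.\ uniqueness of the Busemann function in each of the two directions $\theta^\pm$. A clean way to package this is to observe that the two semi-infinite arms $\gamma^\pm$ correspond to distinct extreme points of the Busemann cone, and then to derive a contradiction with the fact that the competition interface between two directions has zero Lebesgue measure in the direction space.

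The main obstacle will be handling the non-coalescing case rigorously, since the directions $\theta^\pm$ are random and one cannot union-bound over all directions simultaneously. Overcoming this would require either (i) a quantitative coalescence estimate uniform in the direction on a scale-dependent grid, followed by a limiting argument (this is the route taken in \cite{BHS22} using integrable inputs), or (ii) a Busemann-function argument that avoids fixing a direction at all, instead working with the random direction dictated by $\gamma$ itself and deriving a contradiction from the tree structure of \emph{all} semi-infinite geodesics. I would attempt (ii) first because it is cleaner, but I expect the technical backbone to be the quantitative geodesic coalescence and transversal fluctuation estimates that underlie (i).
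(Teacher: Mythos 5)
The paper does not prove this proposition at all: it is imported verbatim as \cite[Theorem 1]{BHS22}, so there is no in-paper argument to compare against. Judged on its own merits, your sketch contains a genuine gap at the step you treat as routine. You propose to rule out the matched-direction case $\theta^+=\theta^-=\theta$ by establishing the fixed-direction statement for each rational $\theta$ and then ``upgrading'' via monotonicity/interleaving in the direction parameter. That upgrade does not work: the direction of a putative bigeodesic is random and need not be rational, and the a.s.\ statements for a countable dense set of directions leave open an uncountable (random, Lebesgue-null) set of exceptional directions in which a bigeodesic could still live. Monotonicity of geodesics in $\theta$ gives no contradiction here, because the existence of a $\theta_0$-directed bigeodesic for an exceptional $\theta_0$ is perfectly compatible with coalescence and non-bigeodesy in every rational direction. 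Closing this gap is precisely the content of \cite{BHS22}: one needs coalescence estimates that are \emph{quantitative and uniform over directions} at each scale, combined with a multi-scale/chaining argument, and there is no known soft (ergodicity or Burton--Keane) substitute.

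You also mislocate the difficulty. The non-coalescing case $\theta^-\neq\theta^+$ is in fact the easy one: if the two arms of a bigeodesic had distinct directions, then the midpoint would sit at linear distance from the line joining two far-apart points of the bigeodesic, contradicting the $O(n^{2/3+\varepsilon})$ transversal fluctuation bound for the finite geodesic segments. This is exactly how the present paper handles the analogous issue for dynamical BLPP in the proof of Proposition \ref{prop:22}, and it requires no Busemann or competition-interface machinery. So the correct architecture is: transversal fluctuations kill $\theta^-\neq\theta^+$ cheaply, axial directions are excluded separately, and the entire weight of the proof falls on the equal-and-random-direction case, where your proposal is weakest.
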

Before moving on, we remark that the work \cite{Ale23} shows that, in the setting of planar FPP, under certain strong unproven assumptions, bigeodesics a.s.\ do not exist (see also \cite{DEP24+}).

\subsection{Motivation: exceptional times in dynamical percolation}
\label{sec:motiv-perc}
An important motivation for this paper is the advancement in the understanding of dynamical percolation in the past two decades, and we now briefly discuss this. Prompted by a question of Paul Malliavin during a seminar at the Mittag-Leffler institute in the spring of 1995, the model of dynamical percolation was introduced in the work \cite{HPS97}; we now briefly introduce a version of this model. Consider the triangular lattice $\TT\subseteq \RR^2$ with the usual graph structure and consider critical Bernoulli site percolation on this lattice. That is, we have a field of i.i.d.\ $\mathrm{Ber}(1/2)$ variables $\{\omega_{v}\}_{v\in \TT}$, and we think of vertices with $\omega_{v}=1$ as open and the others as closed. The central question of interest in this model is whether there exists an infinite connected cluster of vertices $v$ which are all open.
\begin{proposition}[{\cite{Kes80}}]
  \label{prop:28}
  Consider critical Bernoulli site percolation on $\TT$. Almost surely, there does not exist any infinite open cluster.
\end{proposition}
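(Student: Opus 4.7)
The plan is to follow Kesten's classical argument, which combines self-duality at $p = 1/2$, Russo--Seymour--Welsh (RSW) crossing estimates, and a Zhang-type contradiction argument.

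First I would record the self-duality property of site percolation on the triangular lattice: since the faces of $\TT$ are triangles, the lattice coincides with its own matching lattice, which implies that for any rhombus-shaped region exactly one of the events ``there is an open left-right path'' and ``there is a closed top-bottom path'' occurs almost surely. At $p = 1/2$, interchanging open and closed is a measure-preserving symmetry, so each of these two complementary events has probability $1/2$. This yields a uniform lower bound of $1/2$ on the probability of an open left-right crossing of a fixed rhombus.

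Next I would invoke the RSW theorem to upgrade this bound: given that the crossing probability of a square is bounded below by a constant, one obtains a scale-independent lower bound on crossing probabilities of rectangles of any fixed aspect ratio. The proof glues together $O(1)$ many overlapping square crossings with vertical bridges using the FKG inequality.

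The final step is Zhang's argument. Suppose for contradiction that an infinite open cluster exists almost surely at $p = 1/2$. By the Burton--Keane uniqueness theorem, it must be unique. By the open/closed symmetry together with the matching property, a unique infinite closed cluster must also exist simultaneously. Applying the RSW lower bound with FKG to a large box, one shows that with probability arbitrarily close to $1$ for $n$ large, both infinite clusters touch all four sides of $[-n,n]^2$. Two such vertex-disjoint planar clusters, each connecting all four sides of the same square, cannot coexist by a planarity argument, yielding the desired contradiction.

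The main technical obstacle is the RSW theorem itself, where extending crossings from one aspect ratio to arbitrary ones requires delicate geometric gluing and a careful use of FKG; by comparison, self-duality is essentially by inspection on $\TT$, and Zhang's argument becomes a short planarity deduction once the uniform crossing bounds are in hand.
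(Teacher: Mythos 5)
The paper does not prove this statement at all: Proposition \ref{prop:28} is quoted as a classical background result with a citation to \cite{Kes80}, so there is no in-paper argument to compare yours against. Judged on its own, your sketch is the standard Harris--Kesten-style proof and its architecture is sound: self-matching of $\TT$ plus the open/closed symmetry at $p=1/2$ gives the crossing dichotomy, and Zhang's uniqueness-plus-planarity argument finishes. One correction, though: RSW is not actually needed, and the way you deploy it is slightly off. RSW controls crossing probabilities of \emph{finite} rectangles; it does not by itself show that the infinite clusters reach all four sides of $[-n,n]^2$. In Zhang's argument that step comes directly from the contradiction hypothesis $\theta(1/2)>0$ (so that $\partial[-n,n]^2$ is connected to infinity outside the box with probability tending to $1$), combined with FKG and the square-root trick applied to the four boundary-arm events, whose probabilities agree by the symmetries of $\TT$. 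With that substitution your proof closes; as written, the ``main technical obstacle'' you identify (RSW) is a detour rather than a required ingredient, and the genuinely load-bearing inputs are Burton--Keane uniqueness, FKG, and the planarity contradiction between an open path joining the two open arms and a closed path joining the two closed arms.
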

Now, one can also define a dynamical version of the above model. Indeed, we could just define $\omega^0\stackrel{d}{=}\omega$ and further have i.i.d.\ exponential clocks for all $v\in \TT$; if a clock rings at (say) time $t$, then we just independently resample the value $\omega^t_v$. This leads to a stationary process $\{\omega^t\}_{t\geq 0}=\{\omega^t_v\}_{v\in \TT,t\geq 0}$ which can be extended to a stationary process $\{\omega^t\}_{t\in \RR}=\{\omega^t_v\}_{v\in \TT,t\in \RR}$. As a result, one can now look at the percolations given by $\omega^t$ simultaneously for all $t\in \RR$.

In the works \cite{BKS99, SS10, GPS10}, it was shown that the above critical percolation model is in fact ``noise-sensitive'' in the sense that small perturbations to the system can lead to measurable changes in the macroscopic behaviour of the system. A refined understanding of this noise sensitivity behaviour led to the following remarkable statement about the behaviour of dynamical percolation.

\begin{proposition}[{\cite[Theorem 1.4]{GPS10}}]
  \label{prop:29}
 Let $\scT$ denote the set of times $t\in \RR$ such that the $\omega^t$ has an infinite open cluster. Then almost surely, the set $\scT$ is non-empty and has Hausdorff dimension $31/36$.
\end{proposition}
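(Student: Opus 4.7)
My plan is to combine a second moment method with the spectral/noise-sensitivity framework of \cite{BKS99,SS10,GPS10}. For a large radius $R$, let $A_R(t)$ denote the event that the origin is connected to distance $R$ by an open path in the configuration $\omega^t$, and introduce the time-averaged indicator
\[
L_R=\int_0^1 \ind[A_R(t)]\,dt.
\]
By stationarity of the dynamics and the one-arm exponent on the triangular lattice, $\EE[L_R]=\PP(A_R(0))=R^{-5/48+o(1)}$. The technical crux is to estimate $\EE[L_R^2]=\int_0^1\int_0^1 \PP(A_R(s)\cap A_R(t))\,ds\,dt$ by expanding $\ind[A_R]$ in the Walsh-Fourier basis on the Boolean cube and using pivotal/spectral bounds to show that $\PP(A_R(0)\cap A_R(t))-\PP(A_R(0))^2$ is governed by the expected mass of the spectral sample at frequencies that get refreshed within time $t$; together with the four-arm exponent $\alpha_4(R)=R^{-5/4+o(1)}$ this yields $\EE[L_R^2]\leq C\EE[L_R]^2$ uniformly in $R$. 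Paley-Zygmund then delivers $\PP(L_R>0)\geq c>0$, and sending $R\to\infty$ via a compactness argument shows that with positive probability some time $t\in[0,1]$ admits $\0\leftrightarrow\infty$; ergodicity of the time shift promotes this to probability one.

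For the Hausdorff dimension, the upper bound comes from a first moment calculation: partition $[0,1]$ into intervals of length $R^{-\gamma}$, bound the probability that a given window contains a time at which the arm event to distance $R$ occurs by the product of the arm probability and the pivotal refresh rate inside the window, and optimise over $\gamma$ and $R$ to obtain $\dim\scT\leq 31/36$. For the matching lower bound, I would construct a random measure on $\scT$ as a weak subsequential limit of $L_R\,dt/\EE[L_R]$ as $R\to\infty$; the second moment estimate translates into finiteness of the $\beta$-energy
\[
\iint |t-s|^{-\beta}\,d\mu(s)\,d\mu(t)
\]
for all $\beta<31/36$, and Frostman's lemma closes the argument.

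The main obstacle is the quantitative noise-sensitivity input driving the second moment bound: knowing only the one- and four-arm exponents does not suffice, and one needs sharp control on the Walsh-Fourier spectrum of the one-arm indicator, in particular near-optimal lower-tail bounds on the associated spectral sample as developed in \cite{GPS10}. Granting this spectral machinery as a black box, the remaining steps collapse to a fairly standard second moment plus Frostman energy package, with ergodicity invoked only at the very end to upgrade positive probability to probability one.
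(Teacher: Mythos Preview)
The paper does not prove this proposition: it is stated as a citation of \cite[Theorem 1.4]{GPS10} and serves purely as motivation in Section~\ref{sec:motiv-perc} for the analogous questions investigated in dynamical LPP. There is therefore no ``paper's own proof'' to compare against.

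That said, your outline is a fair high-level summary of the strategy in \cite{GPS10}: existence via a second moment bound on the time-integrated one-arm indicator, the dimension upper bound via a first moment covering argument, and the dimension lower bound via Frostman's energy method applied to a limiting measure. You correctly identify that the decisive technical input is the sharp lower-tail estimate for the spectral sample of the one-arm event, which is the main contribution of \cite{GPS10} and which you treat as a black box. One minor point: the four-arm exponent enters more indirectly than your sketch suggests---it governs the expected number of pivotals and hence the rate at which the arm event decorrelates, but the actual second moment bound relies on the finer spectral concentration statement rather than a direct pivotal count. If this were a result the paper actually set out to prove, you would need to unpack that black box; as it stands, the paper simply quotes the result, and your summary is adequate context.
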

The above result is an important motivation for this work. We note that in Propositions \ref{prop:28}, \ref{prop:29}, a certain structure (presence of an infinite open cluster) is a.s.\ not present in the static model, but in fact, is a.s.\ present for the dynamical version at a random non-empty exceptional set of times. The goal of this work is to initiate a corresponding study of exceptional times in dynamical last passage percolation models with the statistic of interest being the presence of bigeodesics. As we saw in Proposition \ref{prop:27}, these are known to a.s.\ not exist in static exponential LPP, and in this work, we investigate the exceptional set of times at which bigeodesics exist in dynamical LPP.

Before moving on, we note that there have recently been some works investigating the presence of exceptional times at which there is a change in the limit shape for the model of dynamical ``critical'' FPP; we note that for static standard FPP, a macroscopic deviation from the limit shape is known to be superpolynomially rare and using this, such exceptional times are known to not exist \cite{Ahl15}. However, in critical FPP, where the probability of the weight of an edge being zero is equal to the critical probability for Bernoulli bond percolation, such exceptional times have been shown to exist for some regimes \cite{DHHL23,DHHL23+}.  We note that the behaviour of planar critical FPP is very different from standard (subcritical FPP) and LPP, and in particular, planar critical FPP is not in the KPZ universality class.

\subsection{Noise-sensitivity and the $n^{-1/3}$ time scale for the onset of chaos in LPP}
\label{sec:chaos-onset}
As discussed in Section \ref{sec:motiv-perc}, a crucial property of critical percolation is its noise-sensitivity. At an intuitive level, for any dynamics, noise sensitivity is directly linked to the presence of exceptional times. Indeed, heuristically, the more noise-sensitive a model is, the more ``independent'' chances it has to exhibit the exceptional configuration as the dynamics proceeds making it more ``likely'' for exceptional times to exist. As a result, for Question \ref{que:1}, it is imperative to investigate the noise-sensitivity properties of LPP.

In fact, there have been significant advances with regard to the above in recent years. First, the concept of noise-sensitivity and its connection to the notation of influences from the analysis of Boolean functions was introduced in \cite{BKS99}-- this machinery was later used in \cite{BKS03} to obtain an $O(n/\log n)$ bound for the variance of distances in FPP in the special case where the edge distribution is supported on only two values. Later, in the work \cite{Cha14}, a correspondence of superconcentration and chaos was discussed for various models, with one of them being LPP with Gaussian weights. In particular, with the help of a certain ``dynamical formula'' that holds for Gaussian LPP, it was shown that for many models, a statistic is superconcentrated in the static case if and only if it is noise-sensitive in the dynamical version. More recently, a finer study \cite{GH24} of noise-sensitivity was done in the context of dynamical LPP. Here, it was shown that the correct time scale at which chaos manifests in the setting of Brownian LPP with the Ornstein-Uhlenbeck dynamics is $n^{-1/3}$, and we now provide a statement for the above without formally defining Brownian LPP (see \cite[Sections 1.1,1.2]{GH24} for a definition). Locally, we shall define $\pi_1(x,y)=(x,0)$ and for a bounded set $A\subseteq \RR^2$, we shall use $|A|_{\hor}=\mathrm{Leb}(\pi_1(A))$, where $\mathrm{Leb}$ denotes the one dimensional Lebesgue measure.

\begin{proposition}[{\cite[Theorem 1.3]{GH24}}]
  \label{prop:44}
  Consider dynamical Brownian LPP defined using a family of independent Brownian motions $\{W_n^t\}_{n\in \ZZ,t\in \RR}$ independently evolving according to the Ornstein-Uhlenbeck dynamics. Use $\widetilde{\Gamma}_{\0}^{\n,t}$  to denote a geodesic from $\0$ to $\n$ at time $t$. Consider the quantity $\cO_n(t)= |\widetilde{\Gamma}_{\0}^{\n,0}\cap \widetilde{\Gamma}_{\0}^{\n,t}|_{\hor}$. Then for any fixed $\delta>0$, and for all $n$ large enough, we have
  \begin{align}
    \label{eq:520}
    &\EE \cO_n(t)= \Theta(n) \textrm{ for all } t<n^{-1/3-\delta},\nonumber\\
    &\EE \cO_n(t)=o(n) \textrm{ for all } t>n^{-1/3+\delta}.
  \end{align}
\end{proposition}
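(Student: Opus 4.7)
The critical scale $n^{-1/3}$ arises from standard KPZ scaling for BLPP: the static passage time $T_{\0}^{\n}$ has fluctuations of order $n^{1/3}$, while under the OU dynamics for time $t$ each Brownian increment has its contribution perturbed by an independent Gaussian of standard deviation $O(\sqrt{t})$, so summing along a fixed staircase of length $\sim n$ produces a weight change of order $\sqrt{nt}$. Matching this with the static fluctuation scale gives the threshold $t \sim n^{-1/3}$. I would treat the two regimes of the statement separately, with this heuristic as the guide.

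For the stability regime $t<n^{-1/3-\delta}$ I would use the explicit Mehler coupling $W_m^t=e^{-t/2}W_m^0+\sqrt{1-e^{-t}}\widetilde{W}_m$, where the $\widetilde{W}_m$ are standard Brownian motions independent of $W^0$. Partition a macroscopic interior segment of $\Gamma_{\0}^{\n,0}$ into cells of an intermediate scale $m=n^{\alpha}$ chosen so that $m^{-1/3}\gg n^{-1/3-\delta}$. Inside each cell, static Brownian Gibbs estimates together with standard one-point passage-time tail bounds yield, with high probability, a local weight gap of at least $cm^{1/3}$ between the cell piece of $\Gamma_{\0}^{\n,0}$ and every competing crossing; the relevant family of competitors has polynomial complexity once transversal fluctuation bounds restrict attention to on-scale detours, allowing a crude union bound. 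Since the OU perturbation shifts the weight of any fixed crossing by a Gaussian of order $\sqrt{mt}\ll m^{1/3}$, a positive fraction of cells retain their geodesic piece, which forces $\EE\cO_n(t)=\Omega(n)$.

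The chaos regime $t>n^{-1/3+\delta}$ is the harder direction. The plan is to implement a Gaussian version of Chatterjee's superconcentration-to-chaos correspondence. Writing $\cO_n(t)=\int_0^n\ind\{\Gamma_{\0}^{\n,0}(x)=\Gamma_{\0}^{\n,t}(x)\}\,dx$, it suffices to bound, on average in $x$, covariances of the form $\Cov\bigl(\ind\{\Gamma_{\0}^{\n,0}(x)=y\},\ind\{\Gamma_{\0}^{\n,t}(x)=y\}\bigr)$. Decomposing such indicators into Wiener chaoses of the Gaussian field $\{W_m^{\cdot}\}$, the OU semigroup damps the $k$-th chaos by $e^{-kt/2}$, so these covariances are small whenever the bulk of the $L^2$ mass sits on chaoses of order $k\gg t^{-1}$. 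The required spectral concentration is the Gaussian translate of noise sensitivity and would follow from the superconcentration of BLPP: $\Var(T_{\0}^{\n})\sim n^{2/3}$ while each Brownian increment on the geodesic carries per-increment influence $\sim n^{-1/3}$, and an influence-to-chaos estimate (via Malliavin calculus) pushes the spectral support of geodesic-location indicators past level $n^{1/3}$.

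The main obstacle is this spectral concentration step in the chaos regime. In the Boolean setting the analogous estimate is what the GPS hypercontractive-pivotal framework delivers, and the Gaussian analogue here requires bounding higher Malliavin derivatives of geodesic-location indicators using the Brownian Gibbs property together with quantitative coalescence and transversal fluctuation estimates. Making this sharp at the critical scale $t\sim n^{-1/3}$ is what I expect to dominate the technical work.
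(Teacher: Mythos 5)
First, note that the paper offers no proof of this proposition: it is quoted verbatim from \cite[Theorem 1.3]{GH24} and is explicitly flagged as not being used in any of the paper's rigorous arguments, so the comparison is really between your sketch and the proof in \cite{GH24}. Your identification of the $n^{-1/3}$ scale is correct, but the chaos regime contains a genuine gap, and it is exactly the step you defer. The Gaussian dynamical Russo--Margulis identity controls $\int_0^t\EE\,\cO_n(s)\,ds$ by $\Var(T_{\0}^{\n})-\Cov(T_{\0}^{\n,0},T_{\0}^{\n,t})=O(n^{2/3})$, which shows $\EE\,\cO_n(s)=o(n)$ for \emph{most} $s\le t$ but says nothing about $s=t$ itself, since $s\mapsto\EE\,\cO_n(s)$ is not known to be monotone. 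Upgrading this time-averaged statement to a pointwise one is the entire difficulty. The Wiener-chaos/Malliavin spectral concentration you propose for geodesic-location indicators has not been carried out and is not the route of \cite{GH24}: there, substantial overlap between $\Gamma_{\0}^{\n,0}$ and $\Gamma_{\0}^{\n,t}$ is shown to force, in the static time-zero environment, a near-optimal path substantially disjoint from the geodesic (a ``twin peaks''/near-touching event), whose probability is controlled by Brownian Gibbs resampling --- the same circle of ideas as Proposition \ref{prop:26} and Section \ref{sec:excursions} of this paper. Without either monotonicity of the overlap or this geometric input, your argument does not close.

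Your stability sketch is closer in spirit to the known proof but the union-bound step would fail as stated. For a \emph{fixed} staircase the OU perturbation of its weight is indeed $O(\sqrt{mt})$, but the supremum over all crossings of a cell of scale $m$ is $\sqrt{1-e^{-t}}$ times (essentially) a passage time in the independent environment $\widetilde W$, hence of order $m\sqrt{t}$, not $\sqrt{mt}$; the competitor class is a continuum and cannot be reduced to polynomially many paths each receiving an independent $O(\sqrt{mt})$ shift. One can salvage the crude worst-case comparison $m\sqrt{t}\ll m^{1/3}$ only at scales $m\ll t^{-3/4}$; otherwise one must, as in \cite{GH20,GH24}, compare the perturbation received by a competitor with that received by the geodesic piece it replaces and beat the difference by the static weight deficit of excursions, which grows with their transversal extent (this is precisely the role of Proposition \ref{prop:10} in the present paper). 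Finally, per-cell retention of the geodesic piece does not by itself yield $\EE\,\cO_n(t)=\Omega(n)$: one must also ensure that $\Gamma_{\0}^{\n,t}$ enters and exits each retained cell where $\Gamma_{\0}^{\n,0}$ does, i.e.\ a coalescence step is needed to convert local optimality into agreement of the two global geodesics.
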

While, strictly speaking, the above result is not used in the proofs of the rigorous results of this paper, the $n^{-1/3}$ time scale for chaos is important at an intuitive level for this work. Indeed, it shall feature in Section \ref{sec:are-upper-bounds}, where we discuss the reasoning behind Conjecture \ref{conj:1}.

While Proposition \ref{prop:44} is in the setting of Brownian LPP, there have recently been works obtaining partial versions of the $n^{-1/3}$ time scale of chaos for FPP under certain assumptions \cite{ADS23} and for exponential LPP \cite{ADS24}. In particular, in \cite{ADS24}, a version of the dynamical formula from \cite{Cha14} is obtained for exponential LPP-- a slight generalisation of this will be important for this paper and we now discuss this.

\subsection{Dynamical Russo-Margulis formula}
\label{sec:stab-chaos-dynam}
We now discuss the above-mentioned dynamical formula relating geodesic overlaps in exponential LPP with covariances of passage times. Similar formulae have also been recently used for dynamical critical percolation \cite{TV23}, and as explained therein, such formulae can be considered to be a dynamical version of the classical Russo-Margulis formula from static Bernoulli percolation.

Following the notation in \cite{ADS23}, for a CDF $F$, let $X=\{X(i)\}_{i=1}^m$ be i.i.d.\ samples drawn from $F$ and for $r\in [0,1]$, let $Y_r=\{Y_r(i)\}_{i=1}^m$ be variables obtained by resampling each $X(i)$ independently with probability $r$ each. For $i\in [\![1,m]\!]$, let $\sigma_i^x\colon \RR^m\rightarrow \RR$ be the function which simply replaces the $i$th coordinate by $x$ and leaves the remaining coordinates unchanged. Now, let $f\colon \RR^m\rightarrow \RR$ be a function and for $i\in [\![1,m]\!]$ and $x\in \RR$, consider the operator $D_i^x$ which is defined by
\begin{equation}
  \label{eq:657}
  D_i^xf= f\circ \sigma_i^x - \int dF(y) f\circ \sigma_i^y .
\end{equation}
Using the above, for functions $f,g\colon \RR^m\rightarrow \RR$, we define the co-influence of index $i$ with respect to $f$ and $g$ at times $0$ and $r$ by
\begin{equation}
  \label{eq:55}
  \mathrm{Inf}_i^{f,g}(r)=\int \EE[D_i^xf(Y_0)D_i^xg(Y_r)]dF(x).
\end{equation}
The following result on the derivative of the co-influences can be obtained by a slight modification of the proof of \cite[Proposition 6]{ADS23}.
\begin{proposition}
  \label{prop:10.1}
  For functions $f,g\colon \RR^m\rightarrow \RR$ satisfying $\EE [f(X)^2], \EE [g(X)^2]<\infty$, and any $r\in (0,1)$, we have
  \begin{equation}
    \label{eq:34}
    \frac{d}{dr}\EE[f(Y_0)g(Y_r)]=-\sum_{i=1}^m \mathrm{Inf}_i^{f,g}(r).
  \end{equation}
\end{proposition}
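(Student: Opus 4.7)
I will prove Proposition \ref{prop:10.1} by computing $\frac{d}{dr}\EE[f(Y_0)g(Y_r)]$ directly via an explicit coupling and some bookkeeping. My plan is to use the following coupling: draw $U_1,\dots,U_m$ i.i.d.\ $\mathrm{Unif}[0,1]$, independent fresh samples $X'_1,\dots,X'_m$ i.i.d.\ $F$, and set $Y_r(i) = X'_i \mathbf{1}[U_i\le r] + X(i)\mathbf{1}[U_i > r]$. This gives the correct joint law of $(Y_0,Y_r)$, it is monotone in $r$, and the only way $Y_r$ changes when $r$ increases is that a single coordinate $i$ (with $U_i$ just above $r$) flips from $X(i)$ to $X'_i$.

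Conditioning on the set $A_r = \{i : U_i \le r\}$ and writing $Y^S$ for the vector obtained from $X$ by resampling exactly the coordinates in $S$, the coupling yields
\begin{equation*}
u(r) := \EE[f(Y_0)g(Y_r)] = \sum_{S\subseteq [\![1,m]\!]} r^{|S|}(1-r)^{m-|S|}\,\EE\bigl[f(X)\,g(Y^S)\bigr].
\end{equation*}
The moment hypotheses $\EE[f(X)^2],\EE[g(X)^2]<\infty$ together with Cauchy--Schwarz allow termwise differentiation. Grouping the derivative of $r^{|S|}(1-r)^{m-|S|}$ by the index $i$ that ``flips'' yields
\begin{equation*}
\frac{d}{dr}u(r) = \sum_{i=1}^m \sum_{S\not\ni i} r^{|S|}(1-r)^{m-|S|-1}\,\EE\bigl[f(X)\bigl(g(Y^{S\cup\{i\}}) - g(Y^S)\bigr)\bigr].
\end{equation*}

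For $i\notin S$ we have $Y^S(i)=X(i)$ and $Y^{S\cup\{i\}} = \sigma_i^{X'_i}(Y^S)$, so integrating out the independent $X'_i$ produces $\int g(\sigma_i^x Y^S)\,dF(x)$. Combining with $g(Y^S) = g(\sigma_i^{X(i)}Y^S)$ and the definition of $D_i^x$, the inner bracket equals $-D_i^{X(i)} g(Y^S)$. Now I apply the key symmetrization: since $\int D_i^x g(Y^S)\,dF(x)=0$ and $D_i^{X(i)}g(Y^S)$ does not depend on $Y^S(i)$, integrating over $X(i)$ allows us to replace $f(X)$ by $D_i^{X(i)} f(X)$ in the expectation without change, giving
\begin{equation*}
\EE\bigl[f(X)(g(Y^{S\cup\{i\}})-g(Y^S))\bigr] = -\int \EE\bigl[D_i^x f(X)\,D_i^x g(Y^S)\bigr]\,dF(x).
\end{equation*}

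Finally I re-sum over $S \not\ni i$. Because $D_i^x f(X)\,D_i^x g(Y^S)$ is independent of the event $\{i\notin A_r\}$ (neither factor depends on $X(i)$ or $X'_i$), the restricted sum $\sum_{S\not\ni i} r^{|S|}(1-r)^{m-|S|-1}\EE[\cdot]$ exactly reassembles $\EE[D_i^x f(Y_0)\,D_i^x g(Y_r)]$: the factor $(1-r)^{m-|S|-1}$ versus $(1-r)^{m-|S|}$ is compensated by the probability $(1-r)$ that $i\notin A_r$. Substituting back yields $\frac{d}{dr}u(r) = -\sum_i \int \EE[D_i^x f(Y_0)\,D_i^x g(Y_r)]\,dF(x) = -\sum_i \mathrm{Inf}_i^{f,g}(r)$, as desired. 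The only nontrivial step is the symmetrization in the previous paragraph; the rest is essentially the Efron--Stein-style bookkeeping that appears in \cite{ADS23}, with the modification that two different functions $f$ and $g$ are tracked in parallel instead of one.
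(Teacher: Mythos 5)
Your proof is correct. The paper itself does not prove this proposition --- it defers to \cite[Proposition 6]{ADS23} with the remark that the argument there extends verbatim to $g\neq f$ --- and your argument is precisely that standard one: condition on the resampled set $A_r$, differentiate the binomial weights termwise (valid here since the sum is finite and Cauchy--Schwarz bounds each coefficient), and use that $D_i^x$ of a function does not depend on the $i$th coordinate of its argument to perform the symmetrization $\EE[f(X)\,D_i^{X(i)}g(Y^S)]=\EE[D_i^{X(i)}f(X)\,D_i^{X(i)}g(Y^S)]$ and to reassemble the restricted sum over $S\not\ni i$ into $\mathrm{Inf}_i^{f,g}(r)$.
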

We note that in \cite{ADS23}, the above result is proved for the case $g=f$. However, the proof therein works verbatim to yield the above more general statement as well. Recalling that exponential LPP comes with a family $\{\omega^t\}_{t\in \RR}=\{\omega^t_z\}_{z\in \ZZ^2,t\in \RR}$ of dynamically evolving i.i.d.\ $\mathrm{Exp}(1)$ variables, we shall take $F\sim \mathrm{Exp}(1)$. Also, in our setting, we have a Poisson clock at each $z\in \ZZ^2$ according to which the weights are being resampled-- that is, at time $t$, there is a probability $1-e^{-t}$ of having already resampled the weight at any given vertex. Thus, in order to respect the above time change, we consider the configuration $\tilde \omega^r=\omega^{-\log(1-r)}$ and note that $\tilde \omega^r$ is obtained by resampling each vertex of $\tilde \omega^0$ independently with probability $r$.

Now, for points $p\leq q\in \ZZ^2$ and $r\in (0,1)$, we shall work with $\widetilde{T}_{p}^{q,r}=T_{p}^{q,-\log(1-r)}$, which we note is measurable with respect to the finitely many values $\widetilde{\omega}_z^r$ for all $z\in \ZZ^2$ satisfying $p\leq z\leq q$. %

In view of the above, we define the function $f_{p,q}$ such that $\widetilde{T}_{p}^{q,r}=f_{p,q}(\widetilde{\omega}^r)$. Thus, we can now consider the quantities $D_z^xf_{p,q}$ from \eqref{eq:657} for all $p\leq z\leq q$. The following result is a minor modification of \cite[Lemma 3.3]{ADS24}.
\begin{proposition}
  \label{prop:11.1}
  There exists a constant $c>0$ such that for all $p_1\leq q_1, p_2\leq q_2$, and all $z\in \ZZ^2$ satisfying $p_1,p_2\leq z\leq  q_1,q_2$, and all $r\in (0,1)$, we have
  \begin{equation}
    \label{eq:56}
    \mathrm{Inf}_z^{f_{p_1,q_1},f_{p_2,q_2}}(r)\geq c \PP(z\in \widetilde{\Gamma}_{p_1}^{q_1,0}\cap \widetilde{\Gamma}_{p_2}^{q_2,r}).
  \end{equation}
\end{proposition}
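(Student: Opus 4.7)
The plan is to exploit the exact piecewise-linear dependence of the passage time on the weight at a single vertex, and then reduce the co-influence to a concrete covariance computation that can be matched against the target probability.

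First, I would fix $p\leq z\leq q$ and describe $\widetilde{T}_p^{q,0}$ as a function of the weight $\omega_z$ at $z$ with the other weights frozen. Splitting the maximum in \eqref{eq:335} according to whether the competing path uses $z$ or avoids it, one gets the identity
\begin{equation*}
  \widetilde{T}_p^{q,0}\bigl(\omega\setminus\omega_z;\,\omega_z=x\bigr)=A + \max(0,x-\alpha_{p,q}),
\end{equation*}
where $A$ and the non-negative ``gap'' $\alpha_{p,q}=\alpha_{p,q}(\omega\setminus\omega_z)$ are measurable with respect to the environment off $z$, and moreover the event $\{z\in \widetilde{\Gamma}_p^{q,0}\}$ coincides a.s.\ with $\{\omega_z>\alpha_{p,q}\}$. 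Since the additive constant $A$ is annihilated by $D_z^x$ and $\EE[\max(0,X-\alpha)]=e^{-\alpha}$ for $X\sim\mathrm{Exp}(1)$, one obtains the explicit formula
\begin{equation*}
  D_z^x f_{p,q}(\widetilde{\omega})=\max(0,x-\alpha_{p,q})-e^{-\alpha_{p,q}}.
\end{equation*}

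Next, writing $\alpha_1$ for the gap associated to $(p_1,q_1)$ at time $0$ and $\alpha_2$ for that associated to $(p_2,q_2)$ at time $r$ (both functions of the environment away from $z$), I would swap integrals to write
\begin{equation*}
  \mathrm{Inf}_z^{f_{p_1,q_1},f_{p_2,q_2}}(r)=\EE\bigl[\,\Cov_X\!\bigl(\max(0,X-\alpha_1),\max(0,X-\alpha_2)\bigr)\,\bigr],
\end{equation*}
where $X\sim \mathrm{Exp}(1)$ is independent of everything. A direct calculation (assume WLOG $\alpha_1\leq \alpha_2$; substitute $u=x-\alpha_2$ in the surviving integral) gives
\begin{equation*}
  \Cov_X\!\bigl(\max(0,X-\alpha_1),\max(0,X-\alpha_2)\bigr)=e^{-\alpha_2}\bigl(2+\alpha_2-\alpha_1-e^{-\alpha_1}\bigr)\geq e^{-\max(\alpha_1,\alpha_2)},
\end{equation*}
yielding $\mathrm{Inf}_z^{f_{p_1,q_1},f_{p_2,q_2}}(r)\geq \EE\bigl[e^{-\max(\alpha_1,\alpha_2)}\bigr]$.

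For the matching upper bound on the probability, observe that the pair $(\widetilde{\omega}_z^0,\widetilde{\omega}_z^r)$ is independent of the environment off $z$ (hence of $\alpha_1,\alpha_2$), and that under the resampling dynamics the coordinate at $z$ is either preserved (probability $1-r$) or independently resampled (probability $r$). Conditioning on $(\alpha_1,\alpha_2)$ then gives
\begin{equation*}
  \PP\bigl(\widetilde{\omega}_z^0>\alpha_1,\widetilde{\omega}_z^r>\alpha_2\mid \alpha_1,\alpha_2\bigr)=(1-r)e^{-\max(\alpha_1,\alpha_2)}+re^{-\alpha_1-\alpha_2}\leq 2e^{-\max(\alpha_1,\alpha_2)}.
\end{equation*}
Combining the two bounds yields $\mathrm{Inf}_z^{f_{p_1,q_1},f_{p_2,q_2}}(r)\geq \tfrac{1}{2}\PP(z\in \widetilde{\Gamma}_{p_1}^{q_1,0}\cap \widetilde{\Gamma}_{p_2}^{q_2,r})$, proving the claim with $c=1/2$.

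There is no essential obstacle beyond book-keeping; the main point to be careful about is the sign of the integrand when $x$ lies strictly between $\alpha_1$ and $\alpha_2$, where the covariance receives a negative contribution, and one must verify that the positive contributions from $x<\min(\alpha_1,\alpha_2)$ and $x>\max(\alpha_1,\alpha_2)$ dominate by at least $e^{-\max(\alpha_1,\alpha_2)}$. The covariance identity above makes this transparent and replaces the more delicate case analysis used in \cite[Lemma~3.3]{ADS24}, where the argument is given for $p_1=p_2,q_1=q_2$; the present version is a direct extension allowing distinct endpoint pairs.
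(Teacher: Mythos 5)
The paper does not actually prove this proposition: it defers entirely to \cite{ADS24}, asserting that the proof of Lemma 3.3 there "generalises verbatim" to distinct endpoint pairs. Your write-up supplies the missing computation, and its skeleton is the standard (and correct) one: the representation $T_p^q=\max(A',B+\omega_z)=A'+\max(0,\omega_z-\alpha_{p,q})$ with $\alpha_{p,q}=A'-B$, the identification $\{z\in\widetilde\Gamma_p^{q}\}=\{\omega_z>\alpha_{p,q}\}$, the reduction of the co-influence to $\EE[\Cov_X(\max(0,X-\alpha_1),\max(0,X-\alpha_2))]$ (valid because $D_z^x$ plugs the \emph{same} external $x$ into both functions and $(\alpha_1,\alpha_2)$ depend only on the environment off $z$), and the independence of $(\widetilde\omega_z^0,\widetilde\omega_z^r)$ from $(\alpha_1,\alpha_2)$. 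All of that is sound, and so is the covariance identity $e^{-\alpha_2}(2+\alpha_2-\alpha_1-e^{-\alpha_1})$ \emph{when} $0\leq\alpha_1\leq\alpha_2$.

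The gap is your assertion that the gap variable $\alpha_{p,q}$ is non-negative. It need not be: if every up-right path from $p$ to $q$ passes through $z$ (e.g.\ $z\in\{p,q\}$, or $p,q,z$ collinear along a coordinate axis), then $A'=-\infty$; and even in non-degenerate rectangles $\alpha$ can be negative, e.g.\ for $p=(0,0)$, $q=(2,1)$, $z=(1,0)$ one has $\alpha\leq \omega_{(0,1)}+\omega_{(1,1)}-\omega_{(2,0)}$, which is negative with positive probability. On the event $\{\alpha<0\}$ the identities $\EE[\max(0,X-\alpha)]=e^{-\alpha}$ and your covariance formula fail, and the intermediate bound $\Cov_X(g_1,g_2)\geq e^{-\max(\alpha_1,\alpha_2)}$ is literally false when both gaps are negative: there $g_i(X)=X-\alpha_i$ on the support of $F$, so the covariance equals $\Var(X)=1$, while $e^{-\max(\alpha_1,\alpha_2)}>1$. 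The argument is easily repaired by comparing everything to $e^{-\max(\alpha_1,\alpha_2)^+}$ instead (writing $\alpha^+=\max(\alpha,0)$): a short case check gives $\Cov_X(g_1,g_2)\geq e^{-\max(\alpha_1,\alpha_2)^+}$ in all three sign regimes (the mixed case yields $e^{-\alpha_2}(1+\alpha_2)$, the doubly negative case yields $1$), while the conditional probability equals $(1-r)e^{-\max(\alpha_1,\alpha_2)^+}+re^{-\alpha_1^+-\alpha_2^+}\leq e^{-\max(\alpha_1,\alpha_2)^+}$, since $\PP(\omega_z>\alpha)=e^{-\alpha^+}$. With that correction your argument closes, in fact with $c=1$ rather than $c=1/2$; please also note that the "$\leq 2e^{-\max}$" step is unnecessary since each of the two weighted terms is already bounded by $e^{-\max(\alpha_1,\alpha_2)^+}$.
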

We note that the source \cite{ADS24} proves the above result for the special case $(p_1,q_1)=(p_2,q_2)$. However, by inspecting the proof, it can be seen that the proof generalises verbatim to yield the above result. The following result is a consequence of the above propositions along with a simple time change; note that for a finite set $A\subseteq \ZZ^2$, $|A|$ simply refers to the cardinality of $A$.
\begin{lemma}
  \label{lem:26}
  There exists a constant $c>0$ such that for all $p_1\leq q_1, p_2\leq q_2$, we have
  \begin{equation}
    \label{eq:369}
    \mathrm{Cov}(T_{p_1}^{q_1},T_{p_2}^{q_2})\geq c\int_0^\infty (\EE|\Gamma_{p_1}^{q_1,0}\cap \Gamma_{p_2}^{q_2,t}|) e^{-t}dt.
  \end{equation}
\end{lemma}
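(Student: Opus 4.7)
The plan is to combine Propositions \ref{prop:10.1} and \ref{prop:11.1} via a fundamental-theorem-of-calculus argument in the parameter $r\in[0,1]$, followed by the time change $t=-\log(1-r)$ that converts the Russo--Margulis ``fraction-resampled'' parametrisation back into the Poisson resampling parametrisation used to define the dynamics.

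Set $f=f_{p_1,q_1}$ and $g=f_{p_2,q_2}$, so that $f(\widetilde{\omega}^r)=\widetilde{T}_{p_1}^{q_1,r}$ and $g(\widetilde{\omega}^r)=\widetilde{T}_{p_2}^{q_2,r}$. Since each $T_p^q$ is a maximum over finitely many sums of $O(n)$ independent $\mathrm{Exp}(1)$ variables, the moment hypothesis $\EE[f(X)^2],\EE[g(X)^2]<\infty$ of Proposition \ref{prop:10.1} is satisfied. Integrating the identity \eqref{eq:34} over $r\in[0,1]$ gives
\begin{equation*}
\EE[f(Y_0)g(Y_1)]-\EE[f(Y_0)g(Y_0)]=-\int_0^1\sum_z \mathrm{Inf}_z^{f,g}(r)\,dr.
\end{equation*}
At $r=0$ we recognise $\EE[f(Y_0)g(Y_0)]=\EE[T_{p_1}^{q_1}T_{p_2}^{q_2}]$, whereas at $r=1$ the configuration $Y_1$ is a fresh independent copy of $Y_0$ with the same marginal, so $\EE[f(Y_0)g(Y_1)]=\EE[T_{p_1}^{q_1}]\,\EE[T_{p_2}^{q_2}]$. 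Rearranging yields the clean identity
\begin{equation*}
\mathrm{Cov}(T_{p_1}^{q_1},T_{p_2}^{q_2})=\int_0^1\sum_z \mathrm{Inf}_z^{f,g}(r)\,dr,
\end{equation*}
where the sum is effectively over the finite set of $z\in\ZZ^2$ with $p_1,p_2\leq z\leq q_1,q_2$, since the operator $D_z^x$ annihilates $f_{p,q}$ for $z$ outside the box $[p,q]$.

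Now I apply Proposition \ref{prop:11.1} termwise to replace each co-influence by its lower bound in terms of a joint on-geodesic probability, and then use linearity of expectation to recognise the sum as an expected overlap:
\begin{equation*}
\sum_z \mathrm{Inf}_z^{f,g}(r)\geq c\sum_z \PP\bigl(z\in\widetilde{\Gamma}_{p_1}^{q_1,0}\cap\widetilde{\Gamma}_{p_2}^{q_2,r}\bigr)=c\,\EE\bigl|\widetilde{\Gamma}_{p_1}^{q_1,0}\cap\widetilde{\Gamma}_{p_2}^{q_2,r}\bigr|.
\end{equation*}
Finally, the substitution $t=-\log(1-r)$, so that $dr=e^{-t}\,dt$ and $\widetilde{\Gamma}^{\cdot,r}=\Gamma^{\cdot,-\log(1-r)}=\Gamma^{\cdot,t}$, rewrites the integral as $c\int_0^\infty \EE|\Gamma_{p_1}^{q_1,0}\cap \Gamma_{p_2}^{q_2,t}|\,e^{-t}\,dt$, which is the desired bound.

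The argument is essentially mechanical once Propositions \ref{prop:10.1} and \ref{prop:11.1} are in hand, so I do not expect a serious obstacle. The only points requiring mild care are (i) verifying the $L^2$ hypothesis for Proposition \ref{prop:10.1}, which is immediate from exponential tails; (ii) correctly evaluating the $r=1$ endpoint as a product of expectations, which relies on the fact that $Y_1$ is genuinely an independent resample of $Y_0$; and (iii) tracking the time change carefully so that the Poisson-clock parametrisation $t$ matches the ``fraction resampled'' parametrisation $r$ with the correct weight $e^{-t}$.
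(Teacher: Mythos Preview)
Your proof is correct and follows essentially the same approach as the paper's: integrate the derivative formula of Proposition~\ref{prop:10.1} over $r\in[0,1]$, identify the endpoints as $\EE[T_{p_1}^{q_1}T_{p_2}^{q_2}]$ and $\EE[T_{p_1}^{q_1}]\EE[T_{p_2}^{q_2}]$, apply the co-influence lower bound of Proposition~\ref{prop:11.1} termwise, and perform the substitution $r=1-e^{-t}$. Your treatment of the endpoints and the observation that the sum localises to $z$ with $p_1,p_2\leq z\leq q_1,q_2$ are in fact slightly more carefully worded than in the paper.
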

\begin{proof}
  By using Proposition \ref{prop:10.1} followed by Proposition \ref{prop:11.1}, we obtain
  \begin{align}
    \label{eq:57}
    \mathrm{Cov}(T_{p_1}^{q_1},T_{p_2}^{q_2})&= \EE[\widetilde{T}_{p_1}^{q_1,0}\widetilde{T}_{p_2}^{q_2,0}]-\EE[\widetilde{T}_{p_1}^{q_1,0}\widetilde{T}_{p_2}^{q_2,1}]\nonumber\\
    &=\int_0^1 \sum_{z\in \ZZ^2}\mathrm{Inf}_z^{f_{(p_1,q_1)},f_{(p_2,q_2)}}(r)dr\nonumber\\
                                             &=\sum_{z: p_1,q_1\leq z\leq p_2,q_2}\int_0^1\mathrm{Inf}_z^{f_{(p_1,q_1)},f_{(p_2,q_2)}}(r)dr\nonumber\\
    &\geq c\int_0^1\EE(| \widetilde{\Gamma}_{p_1}^{q_1,0}\cap \widetilde{\Gamma}_{p_2}^{q_2,r}|)dr=c \int_0^\infty(\EE|\Gamma_{p_1}^{q_1,0}\cap \Gamma_{p_2}^{q_2,t}|) e^{-t}dt.
  \end{align}
To obtain the last term above, we have performed the substitution $r=1-e^{-t}$.
\end{proof}

\section{Last passage percolation preliminaries}
\label{sec:usef-estim-last}
In this section, we shall collect certain useful estimates and results for exponential LPP.

\subsection{Transversal fluctuation estimates}
\label{sec:transfluc}
We shall frequently need estimates controlling the deviation of geodesics in LPP from the straight line joining their endpoints; recall that for points $p\leq q\in \ZZ^2$ and $r\in [\![\phi(p), \phi(q)]\!]$, we can define $\Gamma_p^q(r)$ as described in Section \ref{sec:model-exp}.
\begin{proposition}[{\cite[Theorem 11.1]{BSS14}, \cite[Proposition C.9]{BGZ19}}]
  \label{prop:39}
  Fix $K>1$. There exist constants $C,c$ such that for any point $q\in \ZZ^2$ satisfying $\phi(q)=2n$ and $\slope(\0,q)\in (K^{-1},K)$, we have
  \begin{equation}
    \label{eq:462}
    \PP(\sup_{r\in [\![0,2n]\!]}|\Gamma_{\0}^q(r)-\psi(rq/2n)|\geq \alpha n^{2/3})\leq Ce^{-c\alpha^3}.
  \end{equation}
\end{proposition}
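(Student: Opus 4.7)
The plan is to combine three classical ingredients: \emph{(i)} one-point upper- and lower-tail moderate deviation estimates for point-to-point passage times in exponential LPP (the Laguerre-unitary-ensemble/determinantal analogues of Proposition~\ref{prop:47}), \emph{(ii)} the strict concavity and non-degeneracy of the exponential LPP limit shape at directions of slope in $(K^{-1},K)$, and \emph{(iii)} monotonicity of the geodesic in its endpoints.

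First I would prove the single-intermediate-point version. Fix $r\in [\![0,2n]\!]$ and a lattice point $z\in \ZZ^2$ with $\phi(z)=r$ and $|\psi(z)-r\psi(q)/(2n)|\geq \alpha n^{2/3}$, and bound $\PP(z\in \Gamma_{\0}^q)$. On this event we have the exact splitting $T_{\0}^q = T_{\0}^z+ T_z^q$. By concavity of the limit shape $\mu$ near the direction of $q/(2n)$ (with a modulus uniform in slopes lying in $(K^{-1},K)$), one has $\EE[T_{\0}^z]+\EE[T_z^q]\leq \EE[T_{\0}^q]-c\alpha^2 n^{1/3}$ for some $c=c(K)>0$. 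Thus the event $\{z\in \Gamma_{\0}^q\}$ forces either the constrained sum to exceed its mean by $\tfrac{c}{2}\alpha^2 n^{1/3}$ (upper-tail event, probability $\leq C e^{-c'\alpha^3}$) or $T_{\0}^q$ to fall short of its mean by the same amount (lower-tail event, probability $\leq C e^{-c'\alpha^6}$). Either way, $\PP(z\in \Gamma_{\0}^q)\leq Ce^{-c\alpha^3}$.

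To upgrade to the supremum statement, I would use monotonicity of geodesics in their endpoints: for fixed source $\0$, the function $q'\mapsto \Gamma_{\0}^{q'}(r)$ is non-decreasing (in a coordinate-wise sense) at each $r$. Consequently, if $\sup_r \Gamma_{\0}^q(r)- r\psi(q)/(2n)\geq \alpha n^{2/3}$, then the geodesic to a $\psi$-shifted endpoint $q_+ = q+ (\alpha n^{2/3}, -\alpha n^{2/3})/2$ is forced to arrive at an atypical location (it must overshoot its own straight line), which is controlled by an endpoint version of Step~1; the symmetric shift $q_-$ handles the opposite deviation. This sandwich reduces the supremum event to two one-point events, each with probability $\leq Ce^{-c\alpha^3}$.

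The main obstacle is the Step~2 reduction: one has to set up the sandwich so that the shifted endpoints still lie in a regime where the limit-shape gap $c\alpha^2 n^{1/3}$ can be invoked with a uniform constant, and so that the cubic exponent $\alpha^3$ is not degraded to $\alpha^{3/2}$ by union-bounding. In the lattice setting this is routine via the endpoint monotonicity but requires careful bookkeeping at the boundary; once it is in place, the one-point moderate deviations and the concavity of the limit shape (both classical for exponential LPP) supply everything else.
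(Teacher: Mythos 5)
The paper does not prove this proposition at all; it is quoted directly from \cite[Theorem 11.1]{BSS14} and \cite[Proposition C.9]{BGZ19}, so the comparison below is with the arguments in those references.

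Your Step 1 is the standard one-point transversal fluctuation bound and is essentially right: on $\{z\in\Gamma_{\0}^q\}$ one has $T_{\0}^z+T_z^q\geq T_{\0}^q$, the limit-shape curvature gives a mean gap of order $\alpha^2 n^{1/3}$ (uniformly over $r$ in the bulk, and only larger near the ends), and Proposition \ref{prop:41} then yields $Ce^{-c\alpha^3}$ from the upper tail of the two constrained pieces and $Ce^{-c\alpha^6}$ from the lower tail of $T_{\0}^q$. (You still need to dispose of the points $z$ for which the sub-segments $\0\to z$ or $z\to q$ have slope outside a compact range, where Proposition \ref{prop:41} does not apply with uniform constants, but that is handled by crude bounds.)

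Step 2 has a genuine gap. Monotonicity of $q'\mapsto\Gamma_{\0}^{q'}(r)$ holds for each fixed $r$, but it does not convert the event $\{\sup_r(\Gamma_{\0}^q(r)-r\psi(q)/2n)\geq\alpha n^{2/3}\}$ into a one-point event: if $\Gamma_{\0}^q$ overshoots at some unknown $r$, then so does $\Gamma_{\0}^{q_+}$ (relative to its own chord, by at least $\alpha n^{2/3}/2$) at that same unknown $r$ — you are left with exactly the same supremum for the shifted geodesic, not with an "atypical arrival location" that Step 1 controls. The honest alternative, a union bound over the $\Theta(n^{1/3})$ possible exit segments of the cylinder $B_{\alpha n^{2/3}}(\LL_{\0}^q)$, produces $n^{1/3}e^{-c\alpha^3}$, which fails to give $Ce^{-c\alpha^3}$ precisely in the range $C_0\leq\alpha\leq(\log n)^{1/3}$ (for smaller $\alpha$ the claim is trivial, for larger $\alpha$ the union bound is absorbed). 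This intermediate range is the whole content of the proposition, and the cited proofs close it with a dyadic chaining argument: one controls the geodesic's position at intermediate heights $k\cdot 2n\,2^{-j}$ recursively in the scale $j$, applying the one-point estimate at scale $2^{-j}n$ with inflated parameters $\alpha_j=\alpha\,2^{\theta j}$ chosen so that $\sum_j\alpha_j(2^{-j}n)^{2/3}\lesssim\alpha n^{2/3}$ while $\sum_j 2^{j}e^{-c\alpha_j^3}\lesssim e^{-c'\alpha^3}$. Without some version of this multiscale step your argument does not yield the stated cubic exponent.
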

In fact, we shall also require a stronger mesoscopic transversal fluctuation estimate for exponential LPP, and we now state this.
\begin{proposition}[{\cite[Proposition 2.1]{BBB23}}]
  \label{prop:40}
  Fix $K>1$. Then there exist constants $C,c$ such that for all points $q\in \ZZ^2$ satisfying $\phi(q)=2n$ and $\slope(\0,q)\in (K^{-1},K)$, all $r\in [\![0,n]\!]$ and all $\alpha>0$, we have
  \begin{equation}
    \label{eq:463}
    \PP(|\Gamma_{\0}^{q}(r)-\psi(rq/2n)|\geq \alpha r^{2/3})\leq Ce^{-c\alpha^3}.
  \end{equation}
\end{proposition}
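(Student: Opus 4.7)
The plan is to bound the mesoscopic transversal fluctuation by comparing two-point passage time values through a hypothetical off-axis passage point, using one-point moderate deviation estimates together with the parabolic curvature of the shape function. Write $u_r = \psi(rq/(2n))$ for the expected transversal coordinate of $\Gamma_{\0}^q$ at abscissa $r$. If the geodesic has transversal displacement at least $\alpha r^{2/3}$ at abscissa $r$, then by the concatenation property of geodesics in LPP, there must exist an integer point $p$ on the anti-diagonal $\{\phi = r\}$ with $|\psi(p) - u_r| \geq \alpha r^{2/3}$ such that
\begin{equation*}
T_{\0}^p + T_p^q \geq T_{\0}^q.
\end{equation*}

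First, I would quantify the mean deficit. The exponential LPP shape function is strictly concave with parabolic behaviour near its maximiser, and the slope condition $\slope(\0,q)\in (K^{-1},K)$ keeps us in the interior of the rate function. Standard computations then give
\begin{equation*}
\EE[T_{\0}^p] + \EE[T_p^q] - \EE[T_{\0}^q] \leq -c_0\, \alpha^2 r^{1/3},
\end{equation*}
uniformly over $r \in [\![1,n]\!]$ and over $p$ at transversal distance $\alpha r^{2/3}$ from $u_r$, with $c_0 = c_0(K) > 0$. The $\alpha^2 r^{1/3}$ scale is the characteristic curvature penalty for routing through a point off the optimal direction by $\alpha r^{2/3}$ over a corridor of longitudinal length $r$.

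Second, the key probabilistic input is the one-point moderate deviation estimates for exponential LPP (the analogue of Proposition \ref{prop:47} for BLPP), which give upper tails of the form $\exp(-c t^{3/2})$ and lower tails $\exp(-c t^3)$ for fluctuations around the mean in units of $r^{1/3}$. Crucially, $T_{\0}^p$ and $T_p^q$ depend on disjoint sets of weights, so their upper fluctuations are independent. On the event $T_{\0}^p + T_p^q \geq T_{\0}^q$, the deficit $c_0 \alpha^2 r^{1/3}$ must be absorbed by an upper fluctuation of $T_{\0}^p$ or of $T_p^q$, or by a lower fluctuation of $T_{\0}^q$. The first two contributions dominate (absorbing an $\alpha^2 r^{1/3}$ deviation gives probability $\exp(-c(\alpha^2)^{3/2}) = \exp(-c\alpha^3)$), while the lower-tail contribution is even smaller, of order $\exp(-c\alpha^6)$. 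To upgrade from a single $p$ to all candidate $p$'s, I would apply dyadic chaining over transversal scales $2^k \alpha r^{2/3}$ for $k \geq 0$: at scale $k$ there are $O(2^k \alpha r^{2/3})$ integer points, each contributing per-point probability $\exp(-c(2^k \alpha)^3)$, so the polynomial prefactors are absorbed into the stretched exponential and the sum remains $C \exp(-c' \alpha^3)$ after a mild adjustment of $c$.

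The main obstacle is securing the one-point moderate deviation estimates \emph{uniformly in} $r \in [\![1,n]\!]$ and uniformly over endpoint slopes in $(K^{-1},K)$. For exponential LPP the requisite inputs are available via the exact determinantal identities for passage times, but the constants must be controlled as the scale $r$ shrinks and as the slope varies over a compact set strictly in the interior of the rate function; the slope condition is precisely what guarantees uniformity by keeping us away from the axial directions where the rate function degenerates. A secondary subtlety is handling the very small-$\alpha$ regime, where the curvature bound is weak; there one simply absorbs the issue into the prefactor $C$ by choosing it large enough. With the uniform one-point inputs in hand, the curvature-plus-chaining argument above is routine.
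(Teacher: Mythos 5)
The paper does not prove this statement: it is quoted verbatim from \cite{BBB23}, so there is no internal proof to compare against. Your strategy (shape-function curvature, one-point moderate deviations, chaining) is the right family of ideas, but as written it has a gap at exactly the point that makes the mesoscopic estimate genuinely harder than the macroscopic one (Proposition \ref{prop:39}).

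The problem is the third branch of your three-way split, where the curvature deficit $c_0\alpha^2 r^{1/3}$ is absorbed by a downward fluctuation of $T_{\0}^{q}$. The fluctuation scale of $T_{\0}^{q}$ is $n^{1/3}$, not $r^{1/3}$: a downward deviation of size $c_0\alpha^2 r^{1/3}$ is a deviation of $c_0\alpha^2 (r/n)^{1/3}$ in natural units, so the lower-tail estimate yields only $\exp(-c\alpha^6 r/n)$, not $\exp(-c\alpha^6)$; for $r\ll n$ this is vacuous. Replacing $T_{\0}^{q}$ by $T_{\0}^{z}+T_{z}^{q}-\omega_z$ for $z$ on the straight line at abscissa $r$ shifts the burden to the increment $T_p^{q}-T_z^{q}$ with $|p-z|\sim \alpha r^{2/3}$; one would then need this increment to exceed roughly $\alpha^{3/2}$ times its diffusive scale with probability $e^{-c\alpha^3}$, i.e.\ cube-exponential tails for spatial increments of the profile $z\mapsto T_z^{q}$, which are not available off the shelf (Proposition \ref{prop:37} gives only $e^{-ct^{4/9}}$). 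The standard repair is a multiscale argument in the longitudinal direction: control the position $\Gamma_{\0}^{q}(2^jr)$ for $j=0,1,\dots$ up to the macroscopic scale, and bound the probability that the geodesic is well-localised at abscissa $2^{j+1}r$ but not at abscissa $2^{j}r$ by applying the macroscopic-fraction transversal fluctuation estimate to the sub-geodesic from $\0$ to $\Gamma_{\0}^{q}(2^{j+1}r)$, whose endpoints are then both pinned; summing the resulting $e^{-c\alpha^3}$-type bounds over $j$ gives the claim. A secondary, fixable issue: your union bound over $O(2^k\alpha r^{2/3})$ individual lattice points carries a prefactor polynomial in $r$ (up to $n^{2/3}$) that cannot be absorbed into $e^{-c\alpha^3}$ for bounded $\alpha$; one must instead union over $O(2^k\alpha)$ intervals of length $r^{2/3}$ and use sup-over-interval moderate deviation bounds of the type recorded in Proposition \ref{prop:42}.
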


\subsection{Passage time estimates in exponential LPP}
\label{sec:parallelogram}
We shall require a few moderate deviation estimates for passage times in exponential LPP. First, we state state an estimate for point-to-point passage times.
\begin{proposition}[{\cite[Theorem 2]{LR10}}]
  \label{prop:41}
  Fix $K>0$. There exist constants $C_1,c_1,C_2,c_2>0$ such that for all $m,n$ sufficiently large which additionally satisfy $\slope(\0,(m,n))\in (K^{-1},K)$ and all $\alpha>0$, we have
  \begin{enumerate}
  \item $\PP(T_{\0}^{(m,n)}-(\sqrt{m}+\sqrt{n})^2\geq \alpha n^{1/3})\leq C_1e^{-c_1\min\{\alpha^{3/2},\alpha n^{1/3}\}}$.
  \item $\PP(T_{\0}^{(m,n)}-(\sqrt{m}+\sqrt{n})^2\leq -\alpha n^{1/3})\leq C_2e^{-c_2\alpha^3}$.
  \end{enumerate}
\end{proposition}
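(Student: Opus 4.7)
The plan is to deduce this from the integrable structure of exponential LPP. By Johansson's classical identity, $T_{(1,1)}^{(m,n)}$ has the same law as the largest eigenvalue $\lambda_{\max}$ of an $(m\wedge n)\times (m\wedge n)$ Laguerre Unitary Ensemble (LUE) matrix with shape parameter $|m-n|$, so the problem reduces to moderate deviation bounds for $\lambda_{\max}$ of LUE. Since $\slope(\0,(m,n))\in (K^{-1},K)$, one has $m\asymp n$, the macroscopic edge sits at $(\sqrt{m}+\sqrt{n})^2$, and the natural edge-scale is of order $n^{1/3}$. Thus shifting by $(\sqrt{m}+\sqrt{n})^2$ and rescaling by $n^{1/3}$ puts us exactly in the Tracy-Widom window.

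Next, I would pass to the Fredholm determinant representation $\PP(\lambda_{\max}\leq x) = \det(I - K_{m,n}\vert_{(x,\infty)})$ where $K_{m,n}$ is the Laguerre correlation kernel, and control the two tails via separate steepest-descent analyses. For the upper tail, in the moderate regime $\alpha \leq n^{2/3}$, a contour deformation argument comparing $K_{m,n}$ to the Airy kernel near the soft edge gives $\PP(\lambda_{\max}\geq (\sqrt{m}+\sqrt{n})^2+\alpha n^{1/3})\leq C\exp(-c\alpha^{3/2})$, matching the Tracy-Widom upper tail $F_2^{\mathrm{TW}}(s)\sim \exp(-\tfrac{4}{3}s^{3/2})$. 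For $\alpha\gg n^{2/3}$, the top eigenvalue sits in the large-deviation regime where it has separated from the bulk, and a direct bound on the trace $\mathrm{tr}\,(K_{m,n}\vert_{(x,\infty)})\geq \PP(\lambda_{\max}\geq x)$ using the exponential factor $e^{-x}$ in the Laguerre weight yields the linear-in-$\alpha$ rate $\exp(-c\alpha n^{1/3})$; taking the minimum of the two exponents gives the stated bound.

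For the lower tail, I would exploit the fact that $\{\lambda_{\max} \leq x\}$ forces all eigenvalues to lie in $(-\infty,x]$, so the Fredholm determinant can be estimated by a Selberg-type integral over this truncated region. In the moderate regime $\alpha \leq n^{2/3}$, the Coulomb gas energy cost of compressing the eigenvalue configuration behind a wall located $\alpha n^{1/3}$ below the edge is $\asymp \alpha^3$, giving $\PP(\lambda_{\max}\leq (\sqrt{m}+\sqrt{n})^2 - \alpha n^{1/3})\leq C\exp(-c\alpha^3)$. This matches the cubic lower tail of the Tracy-Widom distribution. For $\alpha \gg n^{2/3}$ the bound is even stronger (quartic in $\alpha$ in terms of large deviation rate), so a single cubic bound suffices uniformly.

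The main obstacle is handling the crossover region $\alpha \sim n^{2/3}$ in the upper tail uniformly, where neither the Airy approximation nor the large-deviation rate function is sharp on its own. One can bypass this delicate matching by overlapping the two regimes: prove the Airy-based bound for $\alpha\leq 2n^{2/3}$ and the large-deviation bound for $\alpha\geq n^{2/3}$, then verify that both estimates agree up to constants at $\alpha \asymp n^{2/3}$ (both give $\exp(-c n)$ there), so combining them produces the claimed $\min\{\alpha^{3/2},\alpha n^{1/3}\}$ rate for all $\alpha>0$.
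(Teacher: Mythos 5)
This statement is not proved in the paper at all: it is quoted verbatim as \cite[Theorem 2]{LR10} (Ledoux--Rider), so the only comparison available is with the cited source rather than with an in-paper argument. Your outline is a legitimate route to the result, but it is not the one Ledoux--Rider actually take. You reduce to the largest LUE eigenvalue via Johansson's identity (correct, and this is indeed how the LPP statement is extracted), and then propose Fredholm-determinant/steepest-descent analysis of the Laguerre kernel for the upper tail together with a Coulomb-gas/Selberg-integral compression estimate for the lower tail. Ledoux--Rider instead work with the Dumitriu--Edelman tridiagonal models and obtain both tails by comparatively elementary variational and moment arguments; this buys uniformity in the parameters and covers general $\beta$-ensembles, whereas your determinantal route is specific to $\beta=2$ and requires genuinely delicate uniform Airy asymptotics of the correlation kernel in the crossover window $\alpha\asymp n^{2/3}$ (your ``overlap the two regimes'' fix is the right idea, but carrying out the uniform kernel estimates there is the hard technical content, not a routine verification). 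Your identification of the two exponents $\alpha^{3/2}$ and $\alpha n^{1/3}$ for the upper tail and $\alpha^{3}$ for the lower tail, and of the regimes in which each dominates, is correct.
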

We note that a finer version of the above with the optimal values of the constants $c_1,c_2$ identified is now available (\cite[Theorem 1.6]{BBBK24})-- however, we do not state it here since the above shall suffice for our application. Before moving on, we note that Proposition \ref{prop:41} implies that
\begin{equation}
  \label{eq:465}
  |\EE T_{\0}^{(m,n)}- (\sqrt{m}+\sqrt{n})^2|\leq Cn^{1/3}
\end{equation}
for all $m,n$ large enough, with $C$ being a positive constant depending only on $K$. 

Later in the paper, we shall also require corresponding moderate deviation estimates for `restricted' passage times which we now define. For a subset $U\subseteq \RR^2$, and points $u\leq v\in \ZZ^2$, we use $T_{u}^{v}\lvert_{U}$ to denote
  \begin{equation}
    \label{eq:403}
    T_{u}^{v}\lvert_{U}=\max_{\gamma: u\rightarrow v, \gamma\setminus\{u,v\}\subseteq U} \wgt(\gamma),
  \end{equation}
  with the convention that $T_{u}^{v}\lvert_{U}=-\infty$ if there does not exist any path $\gamma$ as mentioned above. We note that, as per the above definition and the definition \eqref{eq:334} of the weight of a path, the random variable $T_{u}^{v}\lvert_{U}$ is measurable with respect to the vertex weights $\{\omega_z\}_{z\in U\cap \ZZ^2} \cup\{\omega_u,\omega_v\}$. We now have the following useful result.
  \begin{proposition}[{\cite[Theorem 4.2]{BGZ19}}]
    \label{prop:42}
    Fix $K>1,L>0$. Let $q\in \ZZ^2$ be such that $\phi(q)=2n$, $\slope(0,q)\in (K^{-1},K)$. Consider the point $p_{n,L}$ satisfying $\phi(p_{n,L})=0$ and $\psi(p_{n,L})=Kn^{2/3}$ and let $\ell_{n,L}$ denote the line segment joining $-p_{n,L}$ and $p_{n,L}$. Let $U_{n,L}^q$ denote the parallelogram with $\ell_{n,L}$ and $q+\ell_{n,L}$ as one pair of opposite sides. Then there exist constants $C,c$ such that for all $n,\alpha>0$ and with $z,z'$ ranging over the sets $\ell_{n,L}$ and $q+\ell_{n,L}$ respectively, we have
    \begin{enumerate}
    \item $\PP(\max_{z,z'}(T_z^{z'}-\EE T_z^{z'})\geq \alpha n^{1/3})\leq Ce^{-c\min\{\alpha^{3/2},\alpha n^{1/3}\}}$,
    \item $\PP(\min_{z,z'}(T_z^{z'}-\EE T_z^{z'})\leq -\alpha n^{1/3})\leq Ce^{-c\alpha^3}$,
    \item $\PP(\min_{z,z'}(T_z^{z'}\lvert_{U_{n,L}^q}-\EE T_z^{z'})\leq -\alpha n^{1/3})\leq Ce^{-c\alpha}$.
    \end{enumerate}
  \end{proposition}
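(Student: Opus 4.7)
The plan is to establish parts (1)--(3) of Proposition \ref{prop:42} separately, building on the point-to-point moderate deviation estimate of Proposition \ref{prop:41}, a discretization/monotonicity argument, and the transversal fluctuation bounds from Propositions \ref{prop:39}--\ref{prop:40}.

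For part (1), the approach is to place a lattice net of $O(n^{2/3})$ points on each of $\ell_{n,L}$ and $q+\ell_{n,L}$ and to use monotonicity of the passage time in its endpoints in order to dominate each $T_z^{z'}$ by $T_{\tilde z}^{\tilde z'}$ for nearby net points $\tilde z\le z$ and $\tilde z'\ge z'$ at $L^1$-distance $O(1)$. A Taylor expansion of the shape function $(m,n)\mapsto(\sqrt m+\sqrt n)^2$ shows that $\EE T_{\tilde z}^{\tilde z'}$ varies by at most $O(n^{1/3})$ across the net (since $\ell_{n,L}$ has length $O(n^{2/3})$ and the shape function has quadratic correction of order $(n^{2/3})^2/n=n^{1/3}$), so it can be pooled into a single reference expectation at the cost of absorbing a constant into $\alpha$. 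Applying Proposition \ref{prop:41}(1) pointwise and taking a union bound over the $O(n^{4/3})$ pairs of net points then gives the stated upper tail, since the stretched exponential $e^{-c\min\{\alpha^{3/2},\alpha n^{1/3}\}}$ absorbs any polynomial factor in $n$ once $\alpha$ exceeds a logarithmic threshold. Part (2) follows from exactly the same template, now invoking the cubic lower tail of Proposition \ref{prop:41}(2).

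Part (3) is the most delicate and the expected main obstacle, since the rate drops to the linear $e^{-c\alpha}$. The plan is to split endpoint pairs $(z,z')$ into an \emph{interior class} with $z\in\ell_{n,L/2}$, $z'\in q+\ell_{n,L/2}$, and a \emph{boundary class} consisting of the rest. For interior pairs, the mesoscopic transversal fluctuation estimate of Proposition \ref{prop:40}, together with a union bound over net endpoints, ensures that $\Gamma_z^{z'}\subseteq U_{n,L}^q$ except on an event of probability at most $Ce^{-cL^3}$. On the good event $T_z^{z'}|_{U_{n,L}^q}=T_z^{z'}$, so the lower tail reduces to part (2). For boundary pairs the restriction is a genuine constraint; here one produces a lower bound on $T_z^{z'}|_{U_{n,L}^q}$ by forcing paths through an interior anchor point on the central axis of the parallelogram, decomposing into two sub-passage-times each of length comparable to $n$ that are individually controlled by the point-to-point estimate applied on sub-parallelograms of $U_{n,L}^q$. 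The curvature penalty from this forced routing contributes only $O(n^{1/3})$ in expectation and can be absorbed into the constant $C$.

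The delicate step is in verifying the accounting in part (3): one must ensure that the routing overhead for boundary endpoints is genuinely $O(n^{1/3})$ rather than larger, and that the single dominant failure mode (a near-boundary pair whose anchored sub-passage-time is atypically small) really decays only linearly in $\alpha$, since both sub-segments must simultaneously achieve a lower tail deviation and each one contributes independently via Proposition \ref{prop:41}(2). Tracking these geometric losses against the interior contribution of $e^{-c\alpha^3}+e^{-cL^3}$ and showing that the boundary contribution $e^{-c\alpha}$ is dominant is where the weaker linear exponent in the statement ultimately originates.
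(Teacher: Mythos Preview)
The paper does not prove Proposition~\ref{prop:42}; it is quoted from \cite[Theorem 4.2]{BGZ19} as a known input. Your outline for parts (1) and (2) is the standard one and matches what is done in \cite{BGZ19}: since this is exponential LPP, the endpoints $z,z'$ are already lattice points, so the ``net'' consists of the $O(n^{2/3})$ lattice points on each segment and one union-bounds directly over the $O(n^{4/3})$ pairs using Proposition~\ref{prop:41}; the monotonicity step is unnecessary (and as stated is slightly off, since distinct points on $\ell_{n,L}\subseteq\{\phi=0\}$ are never coordinate-wise comparable).

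Part (3) has a genuine gap. Your single-anchor decomposition does not close: for a boundary endpoint $z$, the sub-path from $z$ to the central anchor must itself stay inside $U_{n,L}^q$, and since $z$ lies on $\partial U_{n,L}^q$, this sub-problem is again a constrained problem of the same type at half the scale. Invoking Proposition~\ref{prop:41}(2) on the unconstrained sub-passage time does not control it, and invoking the constrained estimate is circular. Your own accounting is also inconsistent: if each sub-segment were genuinely governed by the cubic lower tail of Proposition~\ref{prop:41}(2), the boundary contribution would be $O(e^{-c\alpha^3})$, not $e^{-c\alpha}$, so the linear exponent cannot arise from a single anchoring step as you describe. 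The actual proof in \cite[Appendix C]{BGZ19} --- and, as the present paper notes in Section~\ref{sec:app-vol} when adapting the result to BLPP --- uses the multi-scale ``tree-based'' construction of \cite{BSS14}, in which one chains through a hierarchy of anchor points across many scales; it is this recursive structure, absent from your sketch, that produces the weaker $e^{-c\alpha}$ rate.
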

\subsection{A local diffusivity estimate in exponential LPP}
\label{sec:diff-est}
The following regularity estimate for the profile of distances from a fixed point will be useful to us.
\begin{proposition}[{\cite[Theorem 3]{BG21}}]
  \label{prop:37}
  Fix $K>1$. Then there exists a constant $C>0$ such that for any points $q,q'$ satisfying $\phi(q)=\phi(q')=2n$, $\slope(\0,q)\in (K^{-1},K)$ and $|\psi(q-q')|\leq Cn^{2/3}$, we have for positive constants $C',c'$,
  \begin{equation}
    \label{eq:417}
    \PP( |T_{\0}^{q}-T_{\0}^{q'}|\geq \alpha|\psi(q-q')|^{1/2})\leq C'e^{-c'\alpha^{4/9}}.
  \end{equation}
\end{proposition}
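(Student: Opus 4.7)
The result is a local modulus-of-continuity estimate for the passage-time profile $q \mapsto T_\0^q$ along the anti-diagonal $\{\phi = 2n\}$, saying that on scales $k := |\psi(q-q')|$ smaller than the transversal fluctuation scale $n^{2/3}$, it is essentially H\"older-$1/2$. By KPZ scaling, if one considers passage times over a longitudinal distance $m = k^{3/2}$, the corresponding fluctuations are of order $m^{1/3} = k^{1/2}$, matching the right-hand side of \eqref{eq:417}. The plan is to reduce the original problem at scale $n$ to the natural smaller scale $m$ via an intermediate-line decomposition.

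Concretely, I would let $\ell$ denote the antidiagonal line at $\phi$-coordinate $2n - 2m$, and use the identity $T_\0^q = \max_{z \in \ell \cap \ZZ^2}(T_\0^z + T_z^q)$ (and the analogous one for $q'$). By the mesoscopic transversal fluctuation estimate of Proposition \ref{prop:40}, with probability at least $1 - Ce^{-c\alpha^3}$ the maximisers in both identities lie in a deterministic window $W \subseteq \ell$ of width $O(\alpha^{1/3} m^{2/3}) = O(\alpha^{1/3} k)$, centred near where the straight line $\LL_\0^q$ meets $\ell$. On this event,
\begin{equation*}
  \bigl|T_\0^q - T_\0^{q'}\bigr| \leq \sup_{z \in W\cap \ZZ^2}\bigl|T_z^q - T_z^{q'}\bigr|.
\end{equation*}
For each fixed $z \in W$, the expected passage times $\EE T_z^q$ and $\EE T_z^{q'}$ differ by $O(k^{1/2})$ by a Taylor expansion of the deterministic form $(\sqrt{a}+\sqrt{b})^2$ combined with the error bound \eqref{eq:465}. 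Applying the upper and lower moderate deviation estimates of Proposition \ref{prop:41} to the centred variables $T_z^q - \EE T_z^q$ and $T_z^{q'} - \EE T_z^{q'}$, each of which is on scale $m^{1/3} = k^{1/2}$, and taking a union bound over the $O(\alpha^{1/3} k)$ lattice points in $W$, I would obtain an estimate of the form
\begin{equation*}
  \PP\bigl(\bigl|T_\0^q - T_\0^{q'}\bigr| \geq \alpha k^{1/2}\bigr) \leq C\alpha^{1/3} k \, e^{-c\alpha^{3/2}} + Ce^{-c\alpha^3}.
\end{equation*}

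The main obstacle is calibrating this argument to produce the specific $e^{-c'\alpha^{4/9}}$ tail uniformly in $k \leq Cn^{2/3}$. The naive union bound above gives a stronger $e^{-c\alpha^{3/2}}$ tail once $\alpha$ is polynomially large, but degrades for moderate $\alpha$ because the prefactor $k$ can be as large as $n^{2/3}$. The peculiar $4/9$ exponent likely emerges from optimising the intermediate scale $m$ as a function of $\alpha$ (and $k$): choosing $m$ smaller than $k^{3/2}$ shrinks the window $W$ and so lightens the union-bound loss, but worsens the individual fluctuation scale one is controlling, and the balance point of these two effects, after accounting for the asymmetric KPZ tails $\alpha^{3/2}$ (upper) and $\alpha^3$ (lower) in Proposition \ref{prop:41}, produces a hybrid fractional exponent. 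Carrying out this optimisation rigorously and verifying that the stated $4/9$ is indeed the correct exponent, as is done in \cite{BG21}, is the technical heart of the argument, and I would treat it as a black box rather than attempt to re-derive it.
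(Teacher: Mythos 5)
First, note that the paper offers no proof of this statement at all: Proposition \ref{prop:37} is imported verbatim as \cite[Theorem 3]{BG21} and used as a black box, so there is no internal argument to compare yours against. Your proposal is therefore being judged as a standalone sketch, and as such it has a genuine gap at exactly the point you flag. The union bound over the $\Theta(\alpha^{1/3}k)$ lattice points of the window $W$ (with $k$ as large as $Cn^{2/3}$) produces a bound of the form $\alpha^{1/3} k\, e^{-c\alpha^{3/2}} + Ce^{-c\alpha^3}$, which is vacuous for every $\alpha \lesssim (\log n)^{2/3}$, whereas the proposition asserts a bound depending on $\alpha$ alone, uniformly in $n$ and $k$. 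Your proposed remedy --- optimising the intermediate scale $m$ --- does not repair this: if you redo the computation with $m = k^{3/2}/\beta^3$, the individual tails improve to $e^{-c(\alpha\beta)^{3/2}}$ but the window still contains $\alpha^{1/3}k/\beta^2$ points, the admissible range of $\beta$ is capped at $\beta \lesssim \alpha^{1/3}$ by the requirement that the mean shift $\EE T_z^q - \EE T_z^{q'} \approx \beta^3 k^{1/2}$ stay below $\alpha k^{1/2}$, and so a factor of $k$ survives in front of a stretched exponential in $\alpha$ no matter how $m$ is chosen. The $n$-dependence of the prefactor is structural to the union-bound approach, not an artefact of a suboptimal choice of scale, so the $4/9$ cannot emerge from this optimisation.

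What is actually needed --- and what constitutes the content of \cite{BG21} --- is a way to control $\sup_{z\in W}|T_z^q - T_z^{q'}|$ (or, more directly, the increment $T_\0^q - T_\0^{q'}$ itself) without paying for the cardinality of $W$, e.g.\ by exploiting the local random-walk/Brownian structure of the profile $z \mapsto T_z^q$ restricted to an on-scale window, so that the supremum costs only a constant rather than a factor of $|W|$. Since that regularity input is essentially equivalent in difficulty to the proposition being proved, your sketch does not reduce the problem to anything simpler; deferring "the technical heart" to \cite{BG21} is the honest thing to do, but it means the proposal is a plausibility argument for the $k^{1/2}$ scaling rather than a proof, and the specific claim that the $4/9$ arises from balancing the intermediate scale against the asymmetric KPZ tails is, as far as I can tell, not how that exponent is obtained.
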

We note that exponent $4/9$ in the above is not optimal, but it shall suffice for our application.
\subsection{An invariance for the joint distribution of passage times}
\label{sec:distr-symm-joint}
As part of the proof of Theorem \ref{thm:4}, we shall need to estimate the covariances between certain passage times $T_{p_1}^{q_1}, T_{p_2}^{q_2}$ in exponential LPP. Building up on corresponding symmetries for Brownian LPP obtained in \cite{BGW22}, the work \cite{Dau22+} established a certain distributional symmetry in exponential LPP for the joint law of $T_{p_1}^{q_1}, T_{p_2}^{q_2}$ depending on the mutual orientation of the pairs $(p_1,q_1), (p_2,q_2)$, and we now state this.
\begin{proposition}[{\cite[Theorem 1.2]{Dau22+}}]
  \label{prop:50}
  Let $p_1\leq q_1\in \ZZ^2$ and $p_2\leq q_2\in \ZZ^2$. Suppose $\mathbf{c}\in \ZZ^2$ is such that every up-right path $\gamma\colon p_1\rightarrow q_1$ must non-trivially intersect every up-right path $\eta\colon p_2\rightarrow q_2$ and $\eta'\colon p_2+\mathbf{c}\rightarrow q_2+ \mathbf{c}$. Then we have the distributional equality
  \begin{equation}
    \label{eq:582}
    (T_{p_1}^{q_1}, T_{p_2}^{q_2})\stackrel{d}{=} (T_{p_1}^{q_1}, T_{p_2+\mathbf{c}}^{q_2+\mathbf{c}}).
  \end{equation}
\end{proposition}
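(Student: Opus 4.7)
My plan is to derive \eqref{eq:582} from shift-invariance identities for colored stochastic vertex models, following the framework of Borodin--Gorin--Wheeler and its subsequent extensions. The guiding intuition is that the hypothesis of forced path-intersection in $\ZZ^2$ translates into the combinatorial ``non-crossing'' ordering on colors that the colored shift-invariance theorems require, and that \eqref{eq:582} is the Poisson / exponential degeneration of such an identity.

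First, I would realize the joint distribution of $(T_{p_1}^{q_1}, T_{p_2}^{q_2})$ as the joint law of two ``colored'' hitting times in a coupled multi-color system (e.g.\ a two-color TASEP or colored LPP driven by a common environment built from $\{\omega_z\}_{z\in \ZZ^2}$). Under this correspondence, each passage time $T_{p_i}^{q_i}$ becomes the hitting time of a distinguished particle of color $i$ whose source and sink are determined by $p_i$ and $q_i$. Second, I would verify that the geometric hypothesis---that every up-right path from $p_1$ to $q_1$ non-trivially intersects every up-right path from $p_2$ to $q_2$, and likewise every up-right path from $p_2+\mathbf{c}$ to $q_2+\mathbf{c}$---translates into the combinatorial non-crossing condition on the endpoint configuration that is required by the BGW-type colored shift-invariance theorem. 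Third, I would apply this shift-invariance theorem to conclude that the joint distribution of the two colored hitting times is invariant under translating the second color's endpoints by $\mathbf{c}$, and translate the conclusion back to yield \eqref{eq:582}.

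The main obstacle will be Step 2: identifying the correct colored model and verifying that the forced-intersection hypothesis matches precisely the combinatorial non-crossing condition required by the shift-invariance theorem---this is essentially the content of the result, so care is needed to confirm the geometric-to-combinatorial dictionary. A secondary technical point is that colored shift invariance is typically proved at the level of a discrete $q$-deformation, and one must carefully take a limit to recover exponential LPP while preserving the joint distributional identity (as opposed to one-point marginals). An alternative route, avoiding colored systems, would be to exploit the Schur-process representation of exponential LPP: express the joint law of $(T_{p_1}^{q_1}, T_{p_2}^{q_2})$ through contour integrals of Schur-type kernels and establish \eqref{eq:582} as a symmetry of those kernels under translating one set of endpoints by $\mathbf{c}$. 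This alternative, however, ultimately relies on essentially the same underlying combinatorial identity as the colored-systems approach.
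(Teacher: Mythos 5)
The paper does not prove this proposition at all: it is imported verbatim as a special case of \cite[Theorem 1.2]{Dau22+}, so there is no internal argument to compare against. Your proposal is therefore really a plan to re-derive Dauvergne's theorem, and the route you choose --- colored stochastic vertex models and BGW-type shift invariance, degenerated to exponential LPP --- is the other standard route to identities of this kind; it is genuinely different from the proof in \cite{Dau22+}, which is bijective, built from RSK-type local moves on the weight array that realize the shift of $(p_2,q_2)$ while preserving the joint law with $T_{p_1}^{q_1}$. Two caveats about your route. First, the BGW-type theorems shift an observation point by a single elementary lattice step, so to reach a general $\mathbf{c}$ you must iterate, and at each intermediate translate $p_2+\mathbf{c}'$, $q_2+\mathbf{c}'$ the crossing condition must hold; the hypothesis as stated only guarantees it at $\mathbf{c}'=\mathbf{0}$ and $\mathbf{c}'=\mathbf{c}$, so you would need a separate (plausible but not automatic) convexity-type argument that the set of admissible translates is an interval along the relevant direction. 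Second, the degeneration from the $q$-deformed colored model to exponential LPP must be performed at the level of joint laws, as you note. Dauvergne's approach buys you general shifts in one stroke and avoids both issues, which is presumably why the paper cites it rather than BGW. As written, your proposal is a sound strategy but not yet a proof: the entire content sits in your Step 2 plus the iteration issue above.
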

We note that in the above, we have only stated a special case of the result in \cite{Dau22+}, and the result obtained therein is more general. We shall use Proposition \ref{prop:50} in the proof of Theorem \ref{thm:4} to make certain covariance computations tractable.

\section{Outline of the proof}
\label{sec:outline-proofs}
We now outline the proof of Theorem \ref{thm:4}-- this proof is based on the second moment method. Throughout this section, we shall consider the points $u_j,v_k$ (see Figure \ref{fig:ujvk}) defined by $\phi(u_j)=-n, \psi(u_j)=-\theta n+jn^{2/3}$ and $\phi(v_k)=n, \psi(v_k)=\theta n + kn^{2/3}$. Since $u_j\in \ell_{-n,\varepsilon}^\theta,v_k\in \ell_{n,\varepsilon}^\theta$ for $|j|,|k|\leq \varepsilon n^{1/3}$, to prove Theorem \ref{thm:4}, we need only show that there exists $C>0$ such that for all $n$, we have
\begin{equation}
  \label{eq:521}
  \PP(\exists j\in [\![-\varepsilon n^{1/3},\varepsilon n^{1/3}]\!], t\in [0,1]: \0\in \Gamma_{u_j}^{v_{-j},t})\geq C(\log n)^{-1}.
\end{equation}
\begin{figure}
  \centering
  \includegraphics[width=0.5\linewidth]{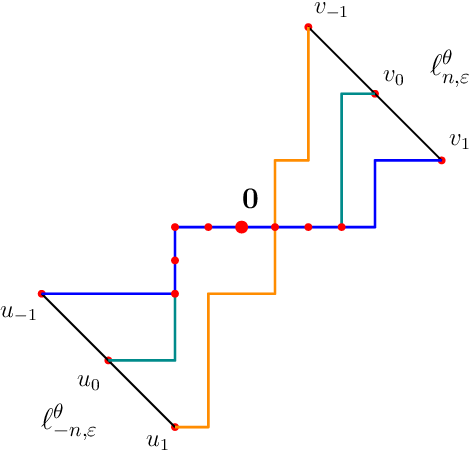}
  \caption{We mark $n^{2/3}$-equispaced points $u_j,v_k$ on the $2\varepsilon n$ length line segments $\ell_{-n,\varepsilon}^\theta,\ell_{n,\varepsilon}^\theta$ respectively. Subsequently, we consider the geodesics $\Gamma_{u_j}^{v_{-j},t}$ for all $t\in [0,1]$ and track whether $\0\in \Gamma_{u_j}^{v_{-j},t}$ occurs. For the case $\theta=0$ and when $\varepsilon n^{1/3}=1$, this figure shows a snapshot at a particular $t\in [0,1]$ at which $\0\in \Gamma_{u_0}^{v_0,t}\cap \Gamma_{u_{-1}}^{v_1,t}$ but $\0\notin \Gamma_{u_1}^{v_{-1},t}$. Thus, this value of $t$, contributes to precisely two of the three integrals appearing in the sum in the definition of $X_n$ (see \eqref{eq:522}). Further, note that in the figure, the overlap set $\Gamma_{u_0}^{v_0,t}\cap \Gamma_{u_{-1}}^{v_1,t}$ consists of exactly $8$ vertices.}
  \label{fig:ujvk}
\end{figure}
Now, we consider the statistic $X_n$ defined by
\begin{equation}
  \label{eq:522}
 X_n= \sum_{|j|\leq \varepsilon n^{1/3}}\int_{0}^{1} \ind(\0 \in \Gamma_{u_j}^{v_{-j},t})dt,
\end{equation}
and our strategy to establish \eqref{eq:521} is to obtain the bounds $\EE X_n\geq C_1n^{-1/3}, \EE X_n^2\leq C_2n^{-2/3}\log n$ for some constants $C_1,C_2$ depending on $\varepsilon,\theta$. Having obtained this, the second moment method would immediately yield that for some constant $C$, 
\begin{equation}
  \label{eq:523}
  \PP(X_n>0)\geq \frac{(\EE X_n)^2}{\EE X_n^2}\geq C(\log n)^{-1},
\end{equation}
and this would complete the proof. Thus the primary goal now is to estimate the first and second moment of $X_n$.

Now, estimating $\EE X_n$ is easy-- indeed, by Fubini's theorem and the stationarity of the dynamics, we have
\begin{equation}
  \label{eq:524}
  \EE X_n= \sum_{|j|\leq \varepsilon n^{1/3}} \PP(\0 \in \Gamma_{u_j}^{v_{-j}}).
\end{equation}
Owing to the $n^{2/3}$ transversal fluctuation scale of geodesics, each term on the right hand side must be (see \cite[Theorem 2]{BB24}) at least $Cn^{-2/3}$ for some constant $C$. Since the sum is over $2\varepsilon n^{1/3}$ many terms, this yields $\EE X_n \geq 2C\varepsilon n^{-1/3}=C_1 n^{-1/3}$, where $C_1$ is a constant depending on $\theta,\varepsilon$.
\subsection{The second moment of $X_n$ in terms of covariances}
\label{sec:out-mom-cov}
The goal now is to obtain an $O(n^{-2/3} \log n)$ upper bound on $\EE X_n^2$, and this is much more involved. First, by basic algebra and by using the stationarity of the dynamics, one obtains
\begin{equation}
  \label{eq:525}
  \EE X_n^2\leq 2\sum_{|j_1|,|j_2|\leq \varepsilon n^{1/3}}\int_{0}^{1} \PP(\0 \in \Gamma_{u_{j_1}}^{v_{-j_1},0}\cap \Gamma_{u_{j_2}}^{v_{-j_2},t})dt.
\end{equation}
Further, we write
\begin{align}
  \label{eq:528}
  \PP(\0 \in \Gamma_{u_{j_1}}^{v_{-j_1},0}\cap \Gamma_{u_{j_2}}^{v_{-j_2},t}) &= \PP(\0 \in \Gamma_{u_{j_1}}^{v_{-j_1},0})\PP(\0\in \Gamma_{u_{j_2}}^{v_{-j_2},t}\vert \0 \in \Gamma_{u_{j_1}}^{v_{-j_1},0})\nonumber\\
  &\leq C_1 n^{-2/3} \PP(\0\in \Gamma_{u_{j_2}}^{v_{-j_2},t}\vert \0 \in \Gamma_{u_{j_1}}^{v_{-j_1},0}),
\end{align}
where the $C_1n^{-2/3}$ bound for the term $\PP(\0 \in \Gamma_{u_{j_1}}^{v_{-j_1},0})$ follows from the discussion earlier for the first moment. However, the term $\PP(\0\in \Gamma_{u_{j_2}}^{v_{-j_2},t}\vert \0 \in \Gamma_{u_{j_1}}^{v_{-j_1},0})$ is harder to analyse. As we shall now describe in a very rough heuristic argument, there is a way to connect this quantity to overlaps of geodesics. First, by using a transversal fluctuation argument, it can be shown that for any $t$, if we define
\begin{equation}
  \label{eq:526}
  R(t,j_1,j_2)= \min\{r: \Gamma_{u_{j_1}}^{v_{-j_1},0}\cap \Gamma_{u_{j_2}}^{v_{-j_2},t}\subseteq \{z:|\phi(z)|\leq r\}\},
\end{equation}
then the natural scale of $R(t,j_1,j_2)$ is $n/(|j_1-j_2|)$ in the sense that we have stretched exponential tail estimates in $\alpha$ for the quantity $\PP(R(t,j_1,j_2)\geq \alpha (n/|j_1-j_2|))$ (see Figure \ref{fig:overlap}). Indeed, we know that the geodesics $\Gamma_{u_{j_1}}^{v_{-j_1},0}$ and $\Gamma_{u_{j_2}}^{v_{-j_2},0}$ lie in a $O(n^{2/3})$ spatial window of their respective lines $\LL_{u_{j_1}}^{v_{-j_1}}$ and $\LL_{u_{j_2}}^{v_{-j_2}}$. Locally using the notation $\LL_{u_{j_1}}^{v_{-j_1}}(r)$ to denote the unique point on $\LL_{u_{j_1}}^{v_{-j_1}}\cap \{z: \phi(z)=r\}$, it can be checked that $|\LL_{u_{j_1}}^{v_{-j_1}}(r)-\LL_{u_{j_2}}^{v_{-j_2}}(r)|=O(n^{2/3})$ is equivalent to $|r|=O(n/|j_1-j_2|)$. This justifies the presence of the term $n/(|j_1-j_2|)$ in the discussion above-- thus, to summarise, the intersection $\Gamma_{u_{j_1}}^{v_{-j_1},0}\cap \Gamma_{u_{j_2}}^{v_{-j_2},t}$ lies in an $O(n/(|j_1-j_2|+1))$ length window of $\{z: \phi(z)=0\}$, where we replaced $|j_1-j_2|$ by $|j_1-j_2|+1$ in order to simultaneously handle the case when $j_1=j_2$.

 Now, in view of the above, it is tempting to assume that
\begin{equation}
  \label{eq:529}
  \PP(\0\in \Gamma_{u_{j_2}}^{v_{-j_2},t}\vert \0 \in \Gamma_{u_{j_1}}^{v_{-j_1},0}) \approx \frac{\EE |\Gamma_{u_{j_2}}^{v_{-j_2},t}\cap \Gamma_{u_{j_1}}^{v_{-j_1},0}|}{n/(|j_1-j_2|+1)},
\end{equation}
In particular, we are assuming in the above that for all $|s|\leq n/(|j_1-j_2|+1)$, the probabilities $\PP(\Gamma_{u_{j_2}}^{v_{-j_2},t}(s)=  \Gamma_{u_{j_1}}^{v_{-j_1},0}(s))$ are comparable, where we are interpreting geodesics as functions as per the notation defined in Section \ref{sec:model-exp}. Assuming the above heuristic connection to geodesic overlaps and using \eqref{eq:528} and \eqref{eq:525}, we obtain that for some constants $C_1,C_2$, we have
\begin{align}
  \label{eq:530}
  \EE X_n^2&\leq C_1n^{-2/3}\sum_{|j_1|,|j_2|\leq \varepsilon n^{1/3}} \frac{\int_0^1\EE |\Gamma_{u_{j_2}}^{v_{-j_2},t}\cap \Gamma_{u_{j_1}}^{v_{-j_1},0}|dt}{n/(|j_1-j_2|+1)}\nonumber\\
  &\leq C_1n^{-2/3}\sum_{|j_1|,|j_2|\leq \varepsilon n^{1/3}} \frac{C_2\Cov(T_{u_{j_1}}^{v_{-j_1}}, T_{u_{j_2}}^{v_{-j_2}})}{n/(|j_1-j_2|+1)},
\end{align}
where to obtain the last line above, we have used the dynamical Russo-Margulis formula (Lemma \ref{lem:26}), which connects integrals of geodesic overlaps over time to covariances in the static model. We emphasize that the final expression above is just in terms of static exponential LPP.

\subsection{Estimating the covariances $\Cov(T_{u_{j_1}}^{v_{-j_1}}, T_{u_{j_2}}^{v_{-j_2}})$}
\label{sec:out-cov}
The goal now is to show that
\begin{equation}
  \label{eq:531}
  |\Cov(T_{u_{j_1}}^{v_{-j_1}}, T_{u_{j_2}}^{v_{-j_2}})|\leq  \frac{Cn^{2/3}}{(|j_1-j_2|+1)^2}.
\end{equation}
This would be sufficient since then by \eqref{eq:530}, we would obtain
\begin{equation}
  \label{eq:532}
  \EE X_n^2\leq C'n^{-2/3}\sum_{|j_1|,|j_2|\leq \varepsilon n^{1/3}}\frac{n^{-1/3}}{|j_1-j_2|+1}=O(n^{-2/3}\log n).
\end{equation}
\begin{figure}
  \centering
  \includegraphics[width=0.65\linewidth]{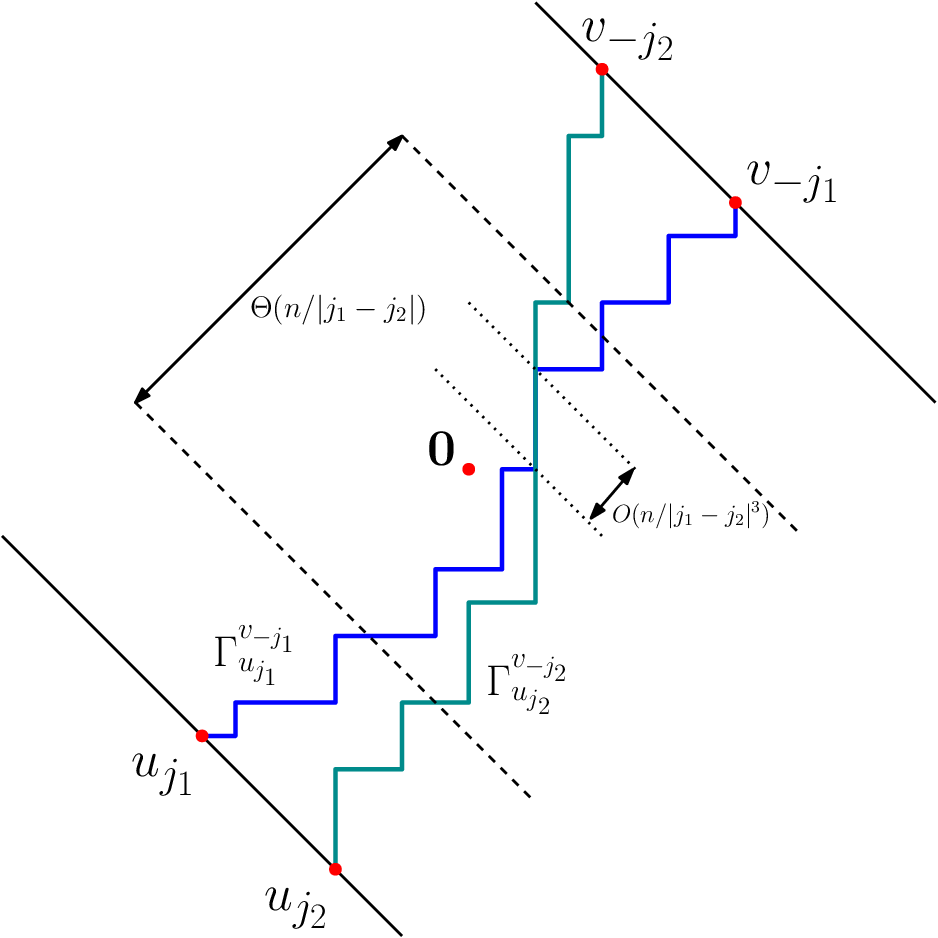}
  \caption{For static exponential LPP and for large $|j_1-j_2|$, by the results from \cite{BBB23}, we expect the geodesics $\Gamma_{u_{j_1}}^{v_{-j_1}}, \Gamma_{u_{j_2}}^{v_{-j_2}}$ to overlap for an $O(n/|j_1-j_2|^{3})$ contiguous stretch located at a random location in a larger region of length $\Theta(n/|j_1-j_2|)$ about $\{z: \phi(z)=0\}$. Thus, by a KPZ scaling heuristic, we expect $\Cov(T_{u_{j_1}}^{v_{-j_1}}, T_{u_{j_2}}^{v_{-j_2}})$ to originate entirely from the overlap, and thus be $O((n/|j_1-j_2|^3)^{2/3})= O(n^{2/3}/|j_1-j_2|^2)$. In contrast, for dynamical LPP, the overlap set $|\Gamma_{u_{j_1}}^{v_{-j_1},0}\cap \Gamma_{u_{j_2}}^{v_{-j_2},t}|$ is no longer necessarily contiguous if $t\neq 0$. However, we still expect that $\Gamma_{u_{j_1}}^{v_{-j_1},0}\cap \Gamma_{u_{j_2}}^{v_{-j_2},t}$ is located within a $\Theta(n/|j_1-j_2|)$ stretch around $\{z: \phi(z)=0\}$, that is, between the dashed lines above.}
  \label{fig:overlap}
\end{figure}
Thus, we need only establish \eqref{eq:531}. We now, very heuristically, discuss why one could expect \eqref{eq:531} to be true. In the work \cite{BBB23} via a transversal fluctuation argument, it is established that the expected overlap $\EE|\Gamma_{u_{j_1}}^{v_{-j_1}}\cap \Gamma_{u_{j_2}}^{v_{-j_2}}|$ is $O(n/|j_1-j_2|^{3-o(1)})$ (see Figure \ref{fig:overlap}); one might expect that the $o(1)$ correction above is an artifact of the proof and that the right order of the overlap ought to be $O(n/|j_1-j_2|^{3})$. For instance, for $|j_1-j_2|=O(1)$, the geodesics are expected to have linear overlap, while for $|j_1-j_2|\sim n^{1/3}$, the geodesics go in macroscopically different directions and are expected to have constant overlap. 

Thus, heuristically, one could expect the entire covariance $|\Cov(T_{u_{j_1}}^{v_{-j_1}}, T_{u_{j_2}}^{v_{-j_2}})|$ as originating from the weight of the overlapping region $\Gamma_{u_{j_1}}^{v_{-j_1}}\cap \Gamma_{u_{j_2}}^{v_{-j_2}}$. That is, intuitively, except for an $O(n/|j_1-j_2|^3)$ length stretch, the geodesics $\Gamma_{u_{j_1}}^{v_{-j_1}}$, $\Gamma_{u_{j_2}}^{v_{-j_2}}$ go via disjoint regions of the space and thus the contributions to the passage times $T_{u_{j_1}}^{v_{-j_1}},T_{u_{j_2}}^{v_{-j_2}}$ from these regions are ``independent'' and do not contribute to the covariance. Thus, one might expect that
\begin{equation}
  \label{eq:533}
  \Cov(T_{u_{j_1}}^{v_{-j_1}},T_{u_{j_2}}^{v_{-j_2}})\sim \Var(\wgt(\Gamma_{u_{j_1}}^{v_{-j_1}}\cap \Gamma_{u_{j_2}}^{v_{-j_2}})).
\end{equation}
Finally, since $|\Gamma_{u_{j_1}}^{v_{-j_1}}\cap \Gamma_{u_{j_2}}^{v_{-j_2}}|$ is of the order $n/|j_1-j_2|^3$ as discussed above, the right hand side of \eqref{eq:533} should be of the order $(n/|j_1-j_2|^3)^{2/3}=n^{2/3}/|j_1-j_2|^2$ via the KPZ 1:2:3 scaling (see Figure \ref{fig:overlap}). This justifies \eqref{eq:531}.

We note that the quadratic covariance decay exponent exponent appearing in \eqref{eq:531} also appears in the continuum theory in the covariance decay estimates for the $\textrm{Airy}_2$ process (\cite[Theorem 1.6]{AM05}, \cite{Wid04}). Indeed, \eqref{eq:532} can be considered to be a discrete version of the above.
\subsection{Remarks on the differences in the actual proof}
\label{sec:outrem}
Since there are some difficulties in making \eqref{eq:529} precise, the actual argument is more complicated than the outline presented above. The approach we take is to use an averaging argument-- this requires using a more complicated definition of $X_n$, and in the actual proof, we let $\fp_n$ be a uniformly random point (independent of the LPP) in an $c_\varepsilon n$ sized square around $\0$ and then define
\begin{equation}
  \label{eq:534}
  X_{n}=\sum_{|j|,|k|\leq \varepsilon n^{1/3}}\int_{0}^{1} \ind(\fp_n \in \Gamma_{u_j}^{v_k,t})dt.
\end{equation}
While the presence of two variables $j,k$ instead of the single variable $j$ present earlier makes the computations more complicated, with the new definition, the step \eqref{eq:529} in the computation of $\EE X_n^2$ can be bypassed. Indeed, this is because we have
\begin{equation}
  \label{eq:535}
  \PP(\fp_n\in \Gamma_{u_{j_1}}^{v_{k_1}}\cap \Gamma_{u_{j_2}}^{v_{k_2}})\leq \frac{\EE |\Gamma_{u_{j_1}}^{v_{k_1}}\cap \Gamma_{u_{j_2}}^{v_{k_2}}|}{(c_\varepsilon n)^2},
\end{equation}
and thus the above brings the expected overlaps into play without requiring an analogue of \eqref{eq:529}. However, we caution that due to the above new definition of $X_n$, there is an extra de-randomization argument required at the end to obtain \eqref{eq:521} which has the point $\0$ instead of $\fp_n$. This is done in Section \ref{sec:obta-exist} by rerooting at the point $\fp_n$ and exploiting the translation invariance of LPP along with some transversal fluctuation estimates. Before moving on, we note that averaging arguments such as the one described above have often been used in the LPP literature, with some examples being \cite{BSS14, BHS22, BB24, BBB23}.

\subsection{The heuristics behind Conjecture \ref{conj:1}}
\label{sec:are-upper-bounds}
We now discuss why we expect that $\dim \scT=0$ almost surely. For $\delta>0$ and $n\in \ZZ$, define $\ell_{n,\delta}=\{p\in \ZZ^2:\phi(p)=n, |\psi(p)|\leq (1-\delta) |n|\}$.
We assume that non-trivial bigeodesics, if they exist, always stay macroscopically away from the coordinate axis in the sense that almost surely, for any $t$ admitting a non-trivial bigeodesic $\Gamma^t$, we have $-1<\liminf_{m\rightarrow \infty}\Gamma^t(m)/m\leq \limsup_{m\rightarrow \infty}\Gamma^t(m)/m<1$. Note that the above is non-trivial-- it was shown for static exponential LPP in \cite[Section 5]{BHS22} and is established for dynamical Brownian LPP in the companion paper \cite{Bha25+} (see Proposition 13 therein).

In view of the above, by a first moment argument, in order to show that $\dim \scT=0$, it suffices to establish that for any fixed $\delta\in (0,1)$, we have
\begin{equation}
  \label{eq:8}
  \PP(\0\in \bigcup_{p\in \ell_{-n,\delta},q\in \ell_{n,\delta},t\in [0,n^{-1/3}]}\Gamma_{p}^{q,t})=O(n^{-1/3+o(1)}).
\end{equation}
Now, let $\cR_n =\{z\in \RR^2: \phi(z)\in [-n/2,n/2]\}$. By an argument resembling the one outlined in Section \ref{sec:outrem} where one randomizes the deterministic point $\0$ to a random point in a $\Theta(n)$ sized box around $\0$ contained wholly in $\cR_n$, it suffices to establish that
\begin{equation}
  \label{eq:7}
  \EE|\cR_n\cap \bigcup_{p\in \ell_{-n,\delta},q\in \ell_{n,\delta},t\in [0,n^{-1/3}]}\Gamma_{p}^{q,t}|=O(n^{5/3+o(1)}).
\end{equation}
To show the above, it suffices to establish the following. Consider two line segments $L_1\subseteq \ell_{-n,\delta}, L_2\subseteq \ell_{n,\delta}$ of length $n^{2/3}$ each. Then we have
\begin{equation}
  \label{eq:2}
  \EE|\cR_n\cap \bigcup_{p\in L_1,q\in L_2,t\in [0,n^{-1/3}]}\Gamma_{p}^{q,t}|=O(n^{1+o(1)}).
\end{equation}
Indeed, the above would imply \eqref{eq:7} as we can cover $\ell_{-n,\delta}\times \ell_{n,\delta}$ by $n^{2/3}$ such pairs $L_1\times L_2$. Note that instead of \eqref{eq:2}, one can analogously consider the static LPP quantity $\EE|\cR_n\cap \bigcup_{p\in L_1,q\in L_2}\Gamma_{p}^{q}|$, and this is in fact $\Theta(n)$ as was shown in the coalescence estimates in \cite[Theorem 3.10]{BHS22}. Indeed, intuitively, since the region $\cR_n$ is at a linear distance from the segments $L_1,L_2$, all the geodesics $\Gamma_{p}^{q}$ have undergone coalescence by the time that they reach $\cR_n$.

Thus, the required estimate \eqref{eq:2} can be interpreted as a version of geodesic coalescence that holds for dynamical LPP right up to the subcritical time scale $n^{-1/3}$ and this is plausible as we intuitively (see Proposition \ref{prop:44}) expect geodesics between points in $L_1,L_2$ to remain stable (at the level of overlaps) until the threshold $n^{-1/3-o(1)}$.

The above provides the intuitive justification behind Conjecture \ref{conj:1}. Thus, it seems delicate to, even heuristically, figure out whether the set $\scT$ should a.s.\ be empty or non-empty, since the above heuristic calculation yields a first moment estimate corresponding to Hausdorff dimension $0$ while Theorem \ref{thm:4} yields a sub-polynomial lower bound on the probability of there existing long geodesics passing through $\0$ at some time in $[0,1]$.

\section{Open questions}
\label{sec:open-questions}
In this short section, we collect a few open questions that are raised by this work.
\begin{question}
  \label{que:2}
  Can the $\Omega(1/\log n)$ lower bound in Theorem \ref{thm:4} be upgraded to an $\Omega(1)$ lower bound? 
\end{question}
If the above holds, then it is plausible that with an additional ergodicity argument, one would obtain the a.s.\ existence of exceptional times. A possible approach to tackle Question \ref{que:2} is to attempt a more refined version of the second moment argument used to prove Theorem \ref{thm:4}. Indeed, as is evident in the second moment argument proof of Theorem \ref{thm:4}, the occurrence of $\0\in \Gamma_{u_j}^{v_{-j},t}$ for some one $j$ and $t\in [0,1]$ makes it more likely that there are other $j',t'$ for which we also have $\0\in \Gamma_{u_{j'}}^{v_{-j'},t'}$. In other words, the presence of one long geodesic passing through $0$ makes it more likely that there are more such geodesics passing via $0$, albeit in possibly different directions and at different times. It is plausible that by carefully employing a second moment argument with a carefully chosen weighted statistic $\widetilde X_n$, one might be able to offset the above effect and have $\EE\widetilde X_n^2\approx (\EE \widetilde X_n)^2$. As of now, we have not been able to make the above strategy work, but we hope to pursue it further in the future. We remark that in the literature, weighted second moment arguments have often been successfully used for tackling questions related to random constraint satisfaction problems (see e.g.\ \cite{AP04,CP13,DSS22}).

We now have the following natural question.
\begin{question}
Prove that Conjecture \ref{conj:1} holds.
\end{question}
As discussed in Section \ref{sec:are-upper-bounds}, in order to prove the above for exponential LPP, it would suffice to prove \eqref{eq:7}. It is possible to envisage a strategy wherein one relates the event $|\cR_n\cap \bigcup_{p\in L_1,q\in L_2,t\in [0,n^{-1/3-\delta}]}\Gamma_{p}^{q,t}|\geq n^{1+\varepsilon}$ to a ``multi-peak'' event in the static LPP $T^0$, in a manner similar to \cite{GH24}, where such a strategy was used for Brownian LPP in a comparatively simpler setting where geodesics at only two times $t=0$ and $t=n^{-1/3-\delta}$ are considered, instead of letting $t$ vary freely over $[0,n^{-1/3-\delta}]$. However, in contrast to the setting of \cite{GH24}, it appears that using the above strategy in our setting would require fine control on how much passage times between two fixed points can change as the dynamics proceeds. Indeed, it would be useful to answer the following question.
\begin{question}
\label{ques:3}
Prove that for any $t\leq n^{-1/3}$, the quantity $\PP(|T_{\0}^{\n,t}-T_{\0}^{\n,0}|\geq \alpha\sqrt{nt})$ decays superpolynomially in $\alpha$.
\end{question}
For Brownian LPP with the noise evolving via the Ornstein-Uhlenbeck dynamics, an application (see \cite[(16)]{GH24}) of the dynamical Russo-Margulis formula shows that the passage times (we locally use tildes to denote Brownian LPP quantities) $\EE|\widetilde{T}_{\0}^{\n,t}-\widetilde{T}_{\0}^{\n,0}|^2=O(nt)$, thereby implying $\PP(|\widetilde{T}_{\0}^{\n,t}-\widetilde{T}_{\0}^{\n,0}|\geq \alpha \sqrt{nt})=O(\alpha^{-2})$; this estimate is used frequently in \cite{GH24}. In order to upgrade this to a superpolynomial estimate, one strategy would be to attempt a finer analysis wherein $|\widetilde{T}_{\0}^{\n,t}-\widetilde{T}_{\0}^{\n,0}|$ is written as a sum of a large number of summands which exhibit sufficient independence.

We now discuss another exciting but speculative question. Recall that in the setting of dynamical critical percolation on the triangular lattice, \cite{GPS10} established that the set of exceptional times when an infinite cluster exists containing the origin a.s.\ has Hausdorff dimension $31/36$. Further, in the work \cite{HPS15}, it was shown that this set of exceptional times naturally comes equipped with a non-trivial local time measure and that at a time sampled from this measure, the percolation configuration agrees with Kesten's incipient infinite cluster \cite{Kes86}. Now, in the context of LPP, recently, there has been significant activity in understanding the environment around a geodesic and this is intimately connected to conditioning on the singular event that the origin lies on a bigeodesic. Indeed, the work \cite{MSZ21} defines a family of measures $\nu^\rho$ for $\rho\in (0,1)$ which can intuitively be thought of as the distribution of exponential LPP conditional on the singular event of there existing a $\rho^{-2}(1-\rho)^{2}$-directed bigeodesic passing through $\0$. In view of the above, one might ask the following speculative question.

\begin{question}
  \label{que:3}
  For dynamical exponential LPP, suppose that there a.s.\ exist exceptional times at which there exists a bigeodesic passing through $\0$. Does this set have a natural local time measure on it? If we let $\mathfrak{t}$ be ``sampled'' from this measure, then can the environment $\{\omega^{\mathfrak{t}}_z\}_{z\in \ZZ^2}$ be written as an explicit convex combination of the measures $\nu^\rho$ from \cite{MSZ21}?
\end{question}

We note that in the scaling limit-- the directed landscape, the work \cite{DSV22} defines a law which should correspond to a directed landscape ``conditioned'' to have a bigeodesic passing through the origin, and thus a version of Question \ref{que:3} can be formulated in this setting as well. However, a natural dynamics on the directed landscape has not been defined yet, and thus the finer question of the investigation of exceptional times for these dynamics is currently out of reach.

Finally, we state a question for an entirely different setting-- that of random planar maps and the associated $\gamma$-Liouville quantum gravity metrics ($\gamma$-LQG). Expected to arise as the scaling limit of discrete random planar map models coupled with a critical statistical physics model, $\gamma$-LQG \cite{She22,DDG21} is a family of continuum planar models of random geometry parametrised by $\gamma\in (0,2)$. Particularly well studied, is the case of \emph{uniform} planar maps \cite{LeGal19}, which corresponds \cite{MS20} to $\gamma=\sqrt{8/3}$ and has a rich integrable theory describing the distances and geodesics arising therein. These models offer a theory of planar random geometry parallel to FPP and LPP and in fact, there are many striking similarities in the behaviour-- for example, all these models exhibit the phenomenon of geodesic coalescence (e.g.\ \cite{GM20}). In fact, the question of bigeodesics has also been investigated in this setting-- it is known \cite[Lemma 4.5]{GPS20} that there a.s.\ do not exist any bigeodesics in $\gamma$-LQG; on the discrete side, for the case of the Uniform Infinite Planar Quadrangulation (UIPQ) \cite{Kri05}, the non-existence of bigeodesics is a direct consequence of the results of \cite{CMM13}. In view of this, one might pose the following question.
\begin{question}
  For $\gamma$-LQG equipped with a ``natural dynamics'', are there any exceptional times at which bigeodesics exist? Similarly, for a dynamical version of a discrete model, say dynamical UIPQ or a dynamical version of the Uniform Infinite Planar Triangulation (UIPT) \cite{AS03}, can there exist exceptional times at which bigeodesics exist?
\end{question}
We note that in the setting of $\gamma$-LQG, the above question seems hard to answer since a natural dynamics on $\gamma$-LQG has not been defined yet. However, the discrete version of the question might be more feasible; for instance, one could work with the edge flip dynamics on the UIPT (see e.g.\ \cite{Bud17}) and attempt to use the integrable structure only present for uniform planar maps as opposed to the general case of $\gamma\neq \sqrt{8/3}$. We note that just as in LPP, there have recently been works investigating the environment around the geodesic in the random planar map setting \cite{Die16,BBG21,Mou24}. In particular, \cite{Die16} constructs the local limit of the UIPQ rooted along a semi-infinite geodesic and \cite{Mou24} constructs the corresponding object for the Brownian plane (or equivalently, the $\sqrt{8/3}$-LQG plane). Thus, it is also possible to pose a version of the highly speculative Question \ref{que:3} in the setting of such planar map models.

\section{Proof of Theorem \ref{thm:4}}
\label{sec:lower-bound}
In this section, we provide the proof of Theorem \ref{thm:4}; note that for the proof, it is sufficient to work with $\varepsilon$ small enough depending on $\theta$. Throughout this section, we shall assume that $\varepsilon$ is small enough to satisfy
\begin{equation}
  \label{eq:724}
  -1< \theta-6\varepsilon< \theta+ 6\varepsilon <1.
\end{equation}
Now, as indicated in Section \ref{sec:outline-proofs}, for $n\in \NN$ and  $j,k\in \RR$, we define the points $u_j,v_k\in \ZZ^2$ by the conditions
\begin{align}
  \label{eq:651}
  &\phi(u_j)=-n, \psi(u_j)\in(-\theta n+jn^{2/3}-2,-\theta n+jn^{2/3}], \nonumber\\
  &\phi(v_k)=n, \psi(v_k)\in (\theta n + kn^{2/3}-2, \theta n + kn^{2/3}].
\end{align}
Note that the above conditions uniquely define the points $u_j,v_k$. For the most part, we shall work with $j,k\in \ZZ$, but on some occasions, we shall work with $j,k\in \RR$ as well. Further, note that, due to the assumption \eqref{eq:724}, we can safely assume throughout that for all $j,k\in [\![-6\varepsilon n^{1/3}, 6\varepsilon n^{1/3}]\!]$ and all $n$, $\slope(u_j,v_k)$ is uniformly bounded away from $0$ and $\infty$-- this will be convenient as we will often use basic estimates (see Section \ref{sec:usef-estim-last}) for exponential LPP.
Define
\begin{equation}
  \label{eq:376}
  \boxx_n= \{p\in \ZZ^2: |\phi(p)|, |\psi(p)|/2 \leq \varepsilon n \}.
\end{equation}
Let $\fp_n$ be a point chosen uniformly in $\boxx_{n}$ independent of the dynamical LPP. Consider the random variable $X_{n}$ defined by
\begin{equation}
  \label{eq:1}
  X_{n}=\sum_{|j|,|k|\leq \varepsilon n^{1/3}}\int_{0}^{1} \ind(\fp_n \in \Gamma_{u_j}^{v_k,t})dt.
\end{equation}
As discussed in Section \ref{sec:outline-proofs}, we shall estimate the first and second moments of $X_n$.

\subsection{The lower bound on the first moment of $X_n$}
\label{sec:lower-bound-first}
\begin{proposition}
  \label{prop:2}
  There exists a constant $C>0$ such that for all $n$, we have $\EE X_n\geq Cn^{-1/3}$.
\end{proposition}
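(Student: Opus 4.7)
The plan is to use Fubini's theorem and the independence of $\fp_n$ from the LPP environment to reduce the first moment of $X_n$ to a normalized expected geodesic cardinality inside $\boxx_n$, and then to lower bound that cardinality via a standard transversal fluctuation argument.

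First, interchanging sum, integral, and expectation, and using the $t$-stationarity of the dynamics (so that each $T^t$ is distributed as static exponential LPP), one obtains
\begin{equation}
\EE X_n = \sum_{|j|,|k|\leq \varepsilon n^{1/3}} \PP(\fp_n \in \Gamma_{u_j}^{v_k}),
\end{equation}
where $\Gamma_{u_j}^{v_k}$ is the geodesic in the static model. Conditioning on the environment and exploiting that $\fp_n$ is uniform on $\boxx_n$ and independent of everything else yields
\begin{equation}
\PP(\fp_n \in \Gamma_{u_j}^{v_k}) = \frac{\EE\,|\Gamma_{u_j}^{v_k}\cap \boxx_n|}{|\boxx_n|}.
\end{equation}
Since $|\boxx_n|=\Theta(\varepsilon^2 n^2)$ and the double sum has $\Theta(\varepsilon^2 n^{2/3})$ terms, it suffices to establish, uniformly in $(j,k)$ and for all large $n$, the lower bound $\EE\,|\Gamma_{u_j}^{v_k}\cap \boxx_n|\geq c\varepsilon n$ for some $c>0$ depending only on $\theta,\varepsilon$.

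To prove this bound I would parametrize $\Gamma_{u_j}^{v_k}$ by its $\phi$-coordinate $r\in[\![-n,n]\!]$ and show that for every $r\in[\![-\varepsilon n/2,\varepsilon n/2]\!]$ the corresponding geodesic vertex lies in $\boxx_n$ with probability bounded below by a positive constant; summing over the $\Omega(\varepsilon n)$ such $r$ would then give the required estimate. A short computation using \eqref{eq:651} gives that the line $\LL_{u_j}^{v_k}$ at $\phi$-coordinate $r$ has $\psi$-coordinate
\begin{equation}
\psi_r \;=\; r\theta + \frac{(n-r)j+(n+r)k}{2n}\,n^{2/3},
\end{equation}
so that for $|r|\leq \varepsilon n/2$ and $|j|,|k|\leq \varepsilon n^{1/3}$ one has $|\psi_r|\leq (|\theta|/2+1)\varepsilon n$. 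By the standing assumption \eqref{eq:724}, this quantity is strictly less than $2\varepsilon n$ with slack $\Omega(\varepsilon n)$, so each line-point lies inside the $\psi$-window of $\boxx_n$ by a margin of order $\varepsilon n$.

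Since $\slope(u_j,v_k)$ lies in a compact sub-interval of $(0,\infty)$ (again by \eqref{eq:724}), the transversal fluctuation estimate Proposition \ref{prop:39}, applied after translating the endpoints to $\0$ and $v_k-u_j$, gives that $\sup_r|\Gamma_{u_j}^{v_k}(r) - \psi_r|\leq A n^{2/3}$ with probability at least $1-Ce^{-cA^3}$ for any fixed constant $A$. Choosing $A$ a large absolute constant, and noting that $An^{2/3}$ is much smaller than the $\Omega(\varepsilon n)$ slack once $n$ is large, we conclude that on this high probability event every geodesic vertex with $\phi\in[\![-\varepsilon n/2,\varepsilon n/2]\!]$ lies in $\boxx_n$. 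This yields $\EE\,|\Gamma_{u_j}^{v_k}\cap \boxx_n|\geq c\varepsilon n$, and combining with the earlier reduction gives $\EE X_n \geq C\varepsilon n^{-1/3}=\Omega(n^{-1/3})$. I do not anticipate any real obstacle; the only point requiring attention is to set up the geometry so that the transversal fluctuation bound applies uniformly over all admissible $(j,k)$, which is guaranteed by \eqref{eq:724}.
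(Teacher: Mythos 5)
Your proposal is correct and follows essentially the same route as the paper: Fubini plus stationarity to reduce to $\sum_{j,k}\PP(\fp_n\in\Gamma_{u_j}^{v_k})$, the identity $\PP(\fp_n\in\Gamma_{u_j}^{v_k})=\EE|\Gamma_{u_j}^{v_k}\cap\boxx_n|/|\boxx_n|$, and a transversal fluctuation argument (Proposition \ref{prop:39}) to show the geodesic contributes $\Omega(\varepsilon n)$ vertices to $\boxx_n$ in expectation. The only cosmetic difference is that you restrict to $|\phi|\leq\varepsilon n/2$ and take a fixed large deviation constant $A$, whereas the paper works on the full window $|\phi|\leq\varepsilon n$ with an event of probability $1-C'e^{-c'n}$; both yield the required bound.
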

\begin{proof}
Let $T$ be a static exponential LPP which is independent of the point $\fp_n$. By using \eqref{eq:1} along with the linearity of expectation, stationarity of the dynamics and Fubini's theorem, we have
  \begin{equation}
    \label{eq:377}
    \EE X_n=\sum_{|j|,|k|\leq \varepsilon n^{1/3}} \PP(\fp_n\in \Gamma_{u_j}^{v_k}).
  \end{equation}
  As a result, it suffices to show that for some constant $C_1$ and all $j,k$ as above, we have
  \begin{equation}
    \label{eq:378}
       \PP(\fp_n\in  \Gamma_{u_j}^{v_k})\geq C_1 n^{-1}.
     \end{equation}
Note that $|\boxx_n|\leq C_2n^2$ for some positive constant $C_2$. Using this along with the fact that $\fp$ is uniformly sampled from $\boxx_n$ independent of the LPP, we have
     \begin{equation}
       \label{eq:379}
       \PP(\fp_n\in  \Gamma_{u_j}^{v_k})= \frac{1}{|\boxx_{n}|}\EE|\boxx_n\cap \Gamma_{u_j}^{v_k}|\geq  C_2^{-1}n^{-2} \EE|\boxx_n\cap \Gamma_{u_j}^{v_k}|.
     \end{equation}
     As a result of the above, we just need to show that for some $C$ and all $j,k$ as above, we have
     \begin{equation}
       \label{eq:380}
       \EE|\boxx_n\cap \Gamma_{u_j}^{v_k}|\geq Cn.
     \end{equation}
     However, the above is easy to see by using transversal fluctuation estimates. Indeed, by using transversal fluctuation estimates (Proposition \ref{prop:39}) for geodesics, for some constants $C',c'$, on an event $E_n$ with probability at least $1-C'e^{-c'n}$, we have $\Gamma_{u_j}^{v_k}\cap \{p:|\phi(p)|\leq \varepsilon n\}= \Gamma_{u_j}^{v_k}\cap \boxx_n$ for all $|j|,|k|\leq \varepsilon n^{1/3}$. As a result, on the event $E_n$, it is easy to see that we must have $|\Gamma_{u_j}^{v_k}\cap \boxx_n|\geq \lfloor \varepsilon n \rfloor - \lceil -\varepsilon n \rceil\geq \varepsilon n$ for all $n$ large enough. Thus, we have
     \begin{equation}
       \label{eq:381}
       \EE|\boxx_n\cap \Gamma_{u_j}^{v_k}|\geq \EE[|\boxx_n\cap \Gamma_{u_j}^{v_k}|;E_n]\geq \varepsilon n ( 1-C'e^{-c'n})\geq \varepsilon n/2,
     \end{equation}
for all $n$ large enough, and this completes the proof.
\end{proof}

\subsection{The upper bound on the second moment of $X_n$}
\label{sec:upper-bound-second}
As we shall see, controlling the second moment is much harder. Our goal now is to prove the following estimate. %
\begin{proposition}
  \label{prop:1}
  There exists a constant $C$ such that for all $n$, we have $\EE X_n^2\leq C n^{-2/3}\log n$.
\end{proposition}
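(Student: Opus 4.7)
Expanding the square and using Fubini's theorem with the stationarity of the dynamics in $t$, we have
\begin{equation*}
  \EE X_n^2 \leq 2\sum_{|j_1|,|k_1|,|j_2|,|k_2|\leq \varepsilon n^{1/3}}\int_0^1 \PP\bigl(\fp_n\in \Gamma_{u_{j_1}}^{v_{k_1},0}\cap \Gamma_{u_{j_2}}^{v_{k_2},t}\bigr)\,dt.
\end{equation*}
Since $\fp_n$ is independent of the LPP and uniform on $\boxx_n$ with $|\boxx_n|=\Theta(n^2)$, we have the averaging bound
\begin{equation*}
\PP\bigl(\fp_n\in \Gamma_{u_{j_1}}^{v_{k_1},0}\cap \Gamma_{u_{j_2}}^{v_{k_2},t}\bigr) \leq \frac{\EE\bigl|\Gamma_{u_{j_1}}^{v_{k_1},0}\cap \Gamma_{u_{j_2}}^{v_{k_2},t}\bigr|}{|\boxx_n|}.
\end{equation*}
Since $1 \leq e\cdot e^{-t}$ on $[0,1]$, the dynamical Russo--Margulis formula (Lemma \ref{lem:26}) yields
\begin{equation*}
\int_0^1 \EE\bigl|\Gamma_{u_{j_1}}^{v_{k_1},0}\cap \Gamma_{u_{j_2}}^{v_{k_2},t}\bigr|\,dt \leq C\,\Cov\bigl(T_{u_{j_1}}^{v_{k_1}},T_{u_{j_2}}^{v_{k_2}}\bigr).
\end{equation*}
Combined, these three steps collapse the dynamical second moment into a sum of covariances of static exponential LPP passage times, which is the main upside of the averaging against $\fp_n$ built into the definition of $X_n$ (and which bypasses the heuristic step~\eqref{eq:529}).

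Setting $\ell = \max(|j_1-j_2|,|k_1-k_2|,1)$, the heart of the argument is the quadratic covariance decay
\begin{equation*}
\Cov\bigl(T_{u_{j_1}}^{v_{k_1}},T_{u_{j_2}}^{v_{k_2}}\bigr)\leq C\,n^{2/3}/\ell^2,
\end{equation*}
the LPP analogue of the $\mathrm{Airy}_2$ covariance decay and the rigorous shadow of the geodesic-overlap heuristic of Section~\ref{sec:out-cov}. To prove it I would polarize,
\begin{equation*}
2\,\Cov\bigl(T_{u_{j_1}}^{v_{k_1}},T_{u_{j_2}}^{v_{k_2}}\bigr) = \Var T_{u_{j_1}}^{v_{k_1}} + \Var T_{u_{j_2}}^{v_{k_2}} - \Var\bigl(T_{u_{j_1}}^{v_{k_1}}-T_{u_{j_2}}^{v_{k_2}}\bigr),
\end{equation*}
apply Proposition~\ref{prop:50} to shift $(u_{j_2},v_{k_2})$ by a vector $\mathbf{c}\in\ZZ^2$ chosen to interlock the endpoints with $(u_{j_1},v_{k_1})$ (so that the forced-intersection hypothesis of Proposition~\ref{prop:50} is satisfied) while placing the shifted pair parallel to the first at transversal offset $\sim \ell n^{2/3}$, and then combine the moderate-deviation estimates Propositions~\ref{prop:41}, \ref{prop:42} with the local diffusivity estimate Proposition~\ref{prop:37} to produce a matching lower bound on $\Var\bigl(T_{u_{j_1}}^{v_{k_1}}-T_{u_{j_2}+\mathbf{c}}^{v_{k_2}+\mathbf{c}}\bigr)$, yielding the required upper bound on the covariance.

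Granted the covariance bound, the remaining double sum is easy. For each fixed $(j_1,k_1)$ there are $O(\ell)$ pairs $(j_2,k_2)$ with $\max(|j_1-j_2|,|k_1-k_2|)=\ell$, so
\begin{equation*}
\sum_{j_2,k_2}\Cov\bigl(T_{u_{j_1}}^{v_{k_1}},T_{u_{j_2}}^{v_{k_2}}\bigr) \leq C\,n^{2/3}\sum_{\ell=1}^{O(n^{1/3})}\frac{\ell}{\ell^2} \leq C'\,n^{2/3}\log n,
\end{equation*}
and summing over the $O(n^{2/3})$ choices of $(j_1,k_1)$ and dividing by $|\boxx_n|=\Theta(n^2)$ yields $\EE X_n^2 \leq Cn^{-2/3}\log n$, as required.

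The main obstacle is the covariance bound itself. The quadratic decay rate is the right expectation from KPZ scaling and from $\mathrm{Airy}_2$ asymptotics, but obtaining it without logarithmic losses requires both a delicate application of Proposition~\ref{prop:50} (to legally cross-shift the second pair of endpoints even though the two geodesics are macroscopically parallel) and careful variance control outside the overlap region, so that contributions from the disjoint parts of the geodesics cancel to leading order in the polarization identity. An additional subtlety appears when $\ell$ is comparable to $n^{1/3}$: the shifted endpoints must still lie in the regime where the standard moderate-deviation estimates Propositions~\ref{prop:41}--\ref{prop:42} apply with uniform constants, which is guaranteed by the slope restriction~\eqref{eq:724}.
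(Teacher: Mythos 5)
Your reduction of $\EE X_n^2$ to a sum of static covariances — expand the square, use stationarity, average over $\fp_n$ to bring in expected overlaps, and convert overlaps to covariances via Lemma \ref{lem:26} — is exactly the paper's route (Lemmas \ref{lem:7} and \ref{lem:8}), and the covariance decay $\Cov(T_{u_{j_1}}^{v_{k_1}},T_{u_{j_2}}^{v_{k_2}})\leq Cn^{2/3}\ell^{-2}$ that you isolate as the heart of the matter is indeed correct and suffices for the final summation (it is implied by the paper's Propositions \ref{prop:35} and \ref{prop:34}). The gap is in your proposed proof of that covariance bound. The polarization identity $2\Cov(A,B)=\Var A+\Var B-\Var(A-B)$ converts the desired upper bound into the claim $\Var(A-B)\geq \Var A+\Var B-O(n^{2/3}\ell^{-2})$, i.e.\ it requires controlling both sides to \emph{additive} precision $n^{2/3}/\ell^{2}$; for $\ell\sim n^{1/3}$ this is $O(1)$ precision on variances of order $n^{2/3}$. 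Propositions \ref{prop:41} and \ref{prop:42} are tail bounds at scale $n^{1/3}$ and pin down variances only up to multiplicative constants, while Proposition \ref{prop:37} is an \emph{upper} bound on $\Var(T_{\0}^{q}-T_{\0}^{q'})$ for a common starting point, not a matching lower bound on $\Var(A-B)$ for two distinct endpoint pairs. None of these tools can produce the required second-order cancellation, so the step is circular: lower-bounding $\Var(A-B)$ to precision $n^{2/3}/\ell^{2}$ is equivalent to the covariance bound you are trying to prove.

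A second, independent problem is the use of Proposition \ref{prop:50}: its hypothesis — that every up-right path $p_1\to q_1$ must intersect every path $p_2\to q_2$ and every path $p_2+\mathbf{c}\to q_2+\mathbf{c}$ — fails precisely in the ``parallel'' regime (in the paper's parametrization, $i\notin A_\Delta$, e.g.\ $j_2-j_1=k_2-k_1$ large), so the cross-shift is not legal there. The paper instead splits into two cases. When the lines $\LL_{u_{j_1}}^{v_{k_1}}$ and $\LL_{u_{j_2}}^{v_{k_2}}$ cross, Proposition \ref{prop:50} reduces to a common starting point, and the bound $C\Delta^{-2}n^{2/3}$ is proved by telescoping both passage times into restricted passage times over nested half-planes $L_i,R_i$ whose leading terms are exactly independent (disjoint regions, Lemma \ref{lem:145}) and whose increments have variance $O(e^{-ci}\Delta^{-2}n^{2/3})$, so Cauchy--Schwarz on the cross terms closes the estimate (Lemmas \ref{lem:97}--\ref{lem:98}, Proposition \ref{prop:36}). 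When the lines do not cross, a transversal-fluctuation argument confines the two geodesics to disjoint half-planes with stretched-exponentially high probability, giving an exponentially small covariance (Proposition \ref{prop:34}). You would need to supply an argument of one of these types — some mechanism that directly isolates the shared randomness rather than comparing variances — for the covariance step.
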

First, in the following lemma, we use the stationarity of the dynamics to bound the second moment by a relatively tractable expression.
\begin{lemma}
  \label{lem:7}
  We have $\EE X_n^2\leq 2\sum_{|j_1|,|j_2|,|k_1|,|k_2|\leq \varepsilon n^{1/3}}\int_0^1 \PP(\fp_n \in \Gamma_{u_{j_1}}^{v_{k_1},0}\cap \Gamma_{u_{j_2}}^{v_{k_2},t})dt$.
\end{lemma}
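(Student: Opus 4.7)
The plan is to expand $X_n^2$ directly from the definition \eqref{eq:1}, pull sums and integrals outside the expectation by Fubini/Tonelli (all integrands are nonnegative indicators), and then exploit the fact that the resampling dynamics making $\{\omega^t\}_{t\in\RR}$ is stationary in $t$ to reduce a general pair of times $(s,t)$ to a single time increment.

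Concretely, I would first write
\[
X_n^2 = \sum_{|j_1|,|k_1|,|j_2|,|k_2|\leq \varepsilon n^{1/3}}\int_0^1\!\!\int_0^1 \ind\bigl(\fp_n\in \Gamma_{u_{j_1}}^{v_{k_1},s}\bigr)\ind\bigl(\fp_n\in \Gamma_{u_{j_2}}^{v_{k_2},t}\bigr)\,ds\,dt,
\]
and take expectations, giving
\[
\EE X_n^2 = \sum_{|j_1|,|k_1|,|j_2|,|k_2|\leq \varepsilon n^{1/3}}\int_0^1\!\!\int_0^1 \PP\bigl(\fp_n\in \Gamma_{u_{j_1}}^{v_{k_1},s}\cap\Gamma_{u_{j_2}}^{v_{k_2},t}\bigr)\,ds\,dt.
\]

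Next I would split the square $[0,1]^2$ into the two triangles $\{s\le t\}$ and $\{s\ge t\}$. On $\{s\le t\}$, stationarity of $\{\omega^r\}_{r\in \RR}$ in $r$ (noting that $\fp_n$ is independent of the entire dynamical environment) gives
\[
\PP\bigl(\fp_n\in \Gamma_{u_{j_1}}^{v_{k_1},s}\cap\Gamma_{u_{j_2}}^{v_{k_2},t}\bigr) = \PP\bigl(\fp_n\in \Gamma_{u_{j_1}}^{v_{k_1},0}\cap\Gamma_{u_{j_2}}^{v_{k_2},t-s}\bigr).
\]
Substituting $r=t-s$ and then swapping the order of integration,
\[
\int_0^1\!\!\int_s^1 \PP\bigl(\fp_n\in \Gamma_{u_{j_1}}^{v_{k_1},0}\cap\Gamma_{u_{j_2}}^{v_{k_2},r}\bigr)\,dt\,ds = \int_0^1 (1-r)\PP\bigl(\fp_n\in \Gamma_{u_{j_1}}^{v_{k_1},0}\cap\Gamma_{u_{j_2}}^{v_{k_2},r}\bigr)\,dr,
\]
which is trivially bounded by $\int_0^1 \PP(\fp_n\in \Gamma_{u_{j_1}}^{v_{k_1},0}\cap\Gamma_{u_{j_2}}^{v_{k_2},r})\,dr$. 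On the other triangle $\{s\ge t\}$ the same manipulation, with the roles of $(j_1,k_1)$ and $(j_2,k_2)$ swapped, yields an identical bound after relabeling the summation indices (which run symmetrically over $|j_\bullet|,|k_\bullet|\le \varepsilon n^{1/3}$).

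Adding the two contributions gives the factor of $2$ in the claimed inequality; there is no real obstacle here since everything is nonnegative, and the only ingredients are Fubini's theorem and the stationarity of the resampling dynamics, both of which hold essentially by construction.
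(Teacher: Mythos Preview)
Your proof is correct and follows essentially the same approach as the paper: expand $X_n^2$, apply Fubini, split the square $[0,1]^2$ into two triangles, use stationarity to shift $(s,t)\mapsto(0,t-s)$, and exploit the symmetry of the summation over $(j_1,k_1)\leftrightarrow(j_2,k_2)$. The only cosmetic difference is that the paper applies the index symmetry first (writing the full square as twice the triangle $\{s<t\}$) and then uses stationarity once, whereas you handle each triangle separately and combine afterward; both routes yield the same bound with the factor $(1-r)\leq 1$ accounting for the inequality.
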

\begin{proof}
  By definition, we have
  \begin{align}
    \label{eq:382}
    \EE X_n^2&= \sum_{|j_1|,|j_2|,|k_1|,|k_2|\leq \varepsilon n^{1/3}}\int_0^1\int_0^1 \PP(\fp_n \in \Gamma_{u_{j_1}}^{v_{k_1},s}\cap \Gamma_{u_{j_2}}^{v_{k_2},t})dsdt\nonumber\\
    &= 2\sum_{|j_1|,|j_2|,|k_1|,|k_2|\leq \varepsilon n^{1/3}}\int_{(s,t)\in [0,1]^2,s<t} \PP(\fp_n \in \Gamma_{u_{j_1}}^{v_{k_1},s}\cap \Gamma_{u_{j_2}}^{v_{k_2},t})dsdt.
  \end{align}
  By stationarity, we know that for any fixed $s<t$, we have
  \begin{equation}
    \label{eq:383}
    \PP(\fp_n \in \Gamma_{u_{j_1}}^{v_{k_1},s}\cap \Gamma_{u_{j_2}}^{v_{k_2},t})= \PP(\fp_n \in \Gamma_{u_{j_1}}^{v_{k_1},0}\cap \Gamma_{u_{j_2}}^{v_{k_2},t-s}).
  \end{equation}
  As a result of this and \eqref{eq:382}, we obtain
  \begin{align}
    \label{eq:384}
    \EE X_n^2&= 2\sum_{|j_1|,|j_2|,|k_1|,|k_2|\leq \varepsilon n^{1/3}}\int_{(s,t)\in [0,1]^2,s<t} \PP(\fp_n \in \Gamma_{u_{j_1}}^{v_{k_1},0}\cap \Gamma_{u_{j_2}}^{v_{k_2},t-s}) dsdt\nonumber\\
    &\leq 2\sum_{|j_1|,|j_2|,|k_1|,|k_2|\leq \varepsilon n^{1/3}}\int_0^1\int_0^1 \PP(\fp_n \in \Gamma_{u_{j_1}}^{v_{k_1},0}\cap \Gamma_{u_{j_2}}^{v_{k_2},t})dsdt,
  \end{align}
  and the required inequality now immediately follows from the above.
\end{proof}

We now define the overlap $\cO_{j_1,j_2}^{k_1,k_2}(t,n)=|\Gamma_{u_{j_1}}^{v_{k_1},0}\cap
\Gamma_{u_{j_2}}^{v_{k_2},t}|$, where recall that for a finite set $A\subseteq \RR^2$, $|A|$ simply refers to the cardinality of $A$. %
As an immediate consequence of the dynamical Russo-Margulis formula (Lemma \ref{lem:26}), we have the following result.
\begin{lemma}
  \label{lem:9}
  There exists a constant $C$ such that for all $n$, and all $j_1,j_2,k_1,k_2\in[\![-\varepsilon n^{1/3}, \varepsilon n^{1/3}]\!]$, 
  \begin{equation}
    \label{eq:456}
    \int_0^1 \EE[\cO_{j_1,j_2}^{k_1,k_2}(t,n)]dt\leq C\Cov ( T_{u_{j_1}}^{v_{k_1}},T_{u_{j_2}}^{v_{k_2}}).
  \end{equation}
\end{lemma}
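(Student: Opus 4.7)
The plan is to derive Lemma \ref{lem:9} as an essentially immediate corollary of the dynamical Russo-Margulis formula established as Lemma \ref{lem:26}, with only a trivial truncation argument needed to remove the exponential weight and restrict the time integral to $[0,1]$.

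First I would verify that the hypotheses of Lemma \ref{lem:26} are satisfied for the pairs $(u_{j_1}, v_{k_1})$ and $(u_{j_2}, v_{k_2})$. Since $\phi(u_{j_i}) = -n$ and $\phi(v_{k_i}) = n$ for $i \in \{1,2\}$, and since by assumption \eqref{eq:724} combined with \eqref{eq:651} we have $\slope(u_{j_i}, v_{k_i}) \in (K^{-1}, K)$ for all $|j_i|, |k_i| \leq \varepsilon n^{1/3}$ (where $K$ depends only on $\theta$ and $\varepsilon$), the ordering $u_{j_i} \leq v_{k_i}$ indeed holds, so Lemma \ref{lem:26} is applicable and yields
\begin{equation*}
  \mathrm{Cov}(T_{u_{j_1}}^{v_{k_1}}, T_{u_{j_2}}^{v_{k_2}}) \geq c \int_0^\infty \EE\bigl[\bigl|\Gamma_{u_{j_1}}^{v_{k_1},0} \cap \Gamma_{u_{j_2}}^{v_{k_2},t}\bigr|\bigr] e^{-t}\, dt = c \int_0^\infty \EE[\cO_{j_1,j_2}^{k_1,k_2}(t,n)]\, e^{-t}\, dt.
\end{equation*}

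Next, since the integrand is non-negative, I would restrict the integral on the right-hand side to the subinterval $[0,1]$ and use the pointwise bound $e^{-t} \geq e^{-1}$ on this range to obtain
\begin{equation*}
  \int_0^\infty \EE[\cO_{j_1,j_2}^{k_1,k_2}(t,n)]\, e^{-t}\, dt \geq e^{-1} \int_0^1 \EE[\cO_{j_1,j_2}^{k_1,k_2}(t,n)]\, dt.
\end{equation*}
Combining the two displays gives the desired inequality with $C = e/c$, where $c$ is the absolute constant from Lemma \ref{lem:26}.

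There is no real obstacle here beyond bookkeeping: the substantive content is entirely packaged in Lemma \ref{lem:26}, which itself rests on Propositions \ref{prop:10.1} and \ref{prop:11.1}. The only thing worth double-checking is that the time-change performed at the end of Lemma \ref{lem:26}'s proof (converting the resampling parameter $r \in [0,1]$ to the Poissonian time $t = -\log(1-r) \in [0,\infty)$) correctly produces the weight $e^{-t}\,dt$ in our setting of dynamical exponential LPP; this is automatic from the definition of the Poisson clock dynamics in Section \ref{sec:model-exp}, so no new input is required.
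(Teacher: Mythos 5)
Your proof is correct and is exactly the argument the paper intends: the paper states Lemma \ref{lem:9} as an immediate consequence of Lemma \ref{lem:26}, and your truncation of the integral to $[0,1]$ together with the bound $e^{-t}\geq e^{-1}$ is the only step needed. The check that $u_{j_i}\leq v_{k_i}$ (so that Lemma \ref{lem:26} applies) is appropriate diligence, though the paper leaves it implicit.
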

In fact, with the help of the above result, the terms appearing in Lemma \ref{lem:7} can be bounded in terms of covariances.

\begin{lemma}
  \label{lem:8}
  There exists a constant $C$ such that for all $n$ and all $j_1,j_2,k_1,k_2\in [\![-\varepsilon n^{1/3}, \varepsilon n^{1/3}]\!]$,
  \begin{equation}
    \label{eq:397}
   \int_0^1\PP(\fp_n \in \Gamma_{u_{j_1}}^{v_{k_1},0}\cap \Gamma_{u_{j_2}}^{v_{k_2},t})dt=  \frac{1}{|\boxx_n|}\int_0^1\EE[|\Gamma_{u_{j_1}}^{v_{k_1},0}\cap \Gamma_{u_{j_2}}^{v_{k_2},t}\cap \boxx_{ n}|]dt \leq Cn^{-2}\Cov(T_{u_{j_1}}^{v_{k_1}},T_{u_{j_2}}^{v_{k_2}}).
  \end{equation}
\end{lemma}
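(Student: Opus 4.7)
\textbf{Proof proposal for Lemma \ref{lem:8}.} The plan has two parts: first establish the equality, then the inequality, each of which is essentially immediate given what is already in hand.

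For the equality, recall that $\fp_n$ is uniform on $\boxx_n$ and is independent of the entire dynamical LPP process. Condition on the $\sigma$-algebra $\cF$ generated by the dynamical LPP (so that both geodesics $\Gamma_{u_{j_1}}^{v_{k_1},0}$ and $\Gamma_{u_{j_2}}^{v_{k_2},t}$ become deterministic sets). Then for any fixed $t$,
\begin{equation*}
\PP(\fp_n \in \Gamma_{u_{j_1}}^{v_{k_1},0}\cap \Gamma_{u_{j_2}}^{v_{k_2},t} \mid \cF) = \frac{|\Gamma_{u_{j_1}}^{v_{k_1},0}\cap \Gamma_{u_{j_2}}^{v_{k_2},t}\cap \boxx_n|}{|\boxx_n|}.
\end{equation*}
Taking expectations and integrating over $t\in [0,1]$ via Fubini yields the claimed equality.

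For the inequality, first use the trivial pointwise bound $|\Gamma_{u_{j_1}}^{v_{k_1},0}\cap \Gamma_{u_{j_2}}^{v_{k_2},t}\cap \boxx_n|\leq |\Gamma_{u_{j_1}}^{v_{k_1},0}\cap \Gamma_{u_{j_2}}^{v_{k_2},t}| = \cO_{j_1,j_2}^{k_1,k_2}(t,n)$, so that
\begin{equation*}
\frac{1}{|\boxx_n|}\int_0^1 \EE[|\Gamma_{u_{j_1}}^{v_{k_1},0}\cap \Gamma_{u_{j_2}}^{v_{k_2},t}\cap \boxx_n|]\,dt \leq \frac{1}{|\boxx_n|}\int_0^1 \EE[\cO_{j_1,j_2}^{k_1,k_2}(t,n)]\,dt.
\end{equation*}
Apply Lemma \ref{lem:9} to bound the right hand side by $|\boxx_n|^{-1}\,C\,\Cov(T_{u_{j_1}}^{v_{k_1}},T_{u_{j_2}}^{v_{k_2}})$. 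Finally, by the definition \eqref{eq:376} of $\boxx_n$, one has $|\boxx_n|\geq c n^2$ for some absolute constant $c>0$ (depending only on $\varepsilon$), so $|\boxx_n|^{-1}\leq C' n^{-2}$; absorbing constants yields the stated inequality. No step here is an obstacle, the lemma being essentially a packaging of Lemma \ref{lem:9} with the averaging provided by the independent uniform point $\fp_n$.
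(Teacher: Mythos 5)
Your proof is correct and follows essentially the same route as the paper's: the equality comes from conditioning on the LPP and using that $\fp_n$ is uniform on $\boxx_n$ and independent of it, and the inequality comes from dropping the intersection with $\boxx_n$, invoking Lemma \ref{lem:9}, and using $|\boxx_n|=\Theta(n^2)$. No issues.
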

\begin{proof}
  Since $\fp_n$ is independent of the LPP and is uniformly distributed in $\boxx_n$, by definition, for any fixed $t>0$, we have $\PP(\fp_n \in \Gamma_{u_{j_1}}^{v_{k_1},0})= \frac{1}{|\boxx_n|}\EE[|\Gamma_{u_{j_1}}^{v_{k_1},0}\cap \Gamma_{u_{j_2}}^{v_{k_2},t}\cap \boxx_{ n}|]$ and this immediately implies the first equality. To obtain the inequality in the above, we simply note that for any $t>0$, we have $\EE[|\Gamma_{u_{j_1}}^{v_{k_1},0}\cap \Gamma_{u_{j_2}}^{v_{k_2},t}\cap \boxx_{ n}|]\leq \EE[|\Gamma_{u_{j_1}}^{v_{k_1},0}\cap \Gamma_{u_{j_2}}^{v_{k_2},t}|]=\cO_{j_1,j_2}^{k_1,k_2}(t,n)$ along with Lemma \ref{lem:9}.
\end{proof}

In view of Lemma \ref{lem:7} and the above result, the task now is to obtain precise estimates on $\Cov(T_{u_{j_1}}^{v_{k_1}},T_{u_{j_2}}^{v_{k_2}})$ for different values of $j_1,j_2,k_1,k_2$. For doing so, it will be convenient to introduce some notation-- for $\Delta\in \ZZ$, we define the interval $A_\Delta$ by
  \begin{equation}
    \label{eq:457}
    A_\Delta=
    \begin{cases}
      [\![-\Delta,0]\!], &\textrm{ if } \Delta>0,\\
      [\![0, |\Delta|]\!], &\textrm{ if } \Delta\leq 0.
    \end{cases}
\end{equation}
Now, we state two lemmas without proof and use these to complete the proof of Proposition \ref{prop:1}. Afterwards, we shall provide the proof of these lemmas.

\begin{proposition}
  \label{prop:35}
There exists a constant $C$ such that for all $n$ and all $j,k,i,\Delta\in [\![-2\varepsilon n^{1/3},2\varepsilon n^{1/3}]\!]$ additionally satisfying $i\in A_{\Delta}$, we have %
  \begin{equation}
    \label{eq:399}
    |\Cov(T_{u_j}^{v_{k}}, T_{u_{j+i}}^{v_{k+\Delta + i}})|\leq C(1+|\Delta|)^{-2}n^{2/3}.%
  \end{equation}

\end{proposition}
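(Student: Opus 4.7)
My approach combines the distributional symmetry Proposition \ref{prop:50} with a restricted passage time decomposition around the crossing of the two chords $\LL_{u_j}^{v_k}$ and $\LL_{u_{j+i}}^{v_{k+\Delta+i}}$. The condition $i \in A_\Delta$ is exactly what ensures that every up-right path from $u_j$ to $v_k$ must non-trivially intersect every up-right path from $u_{j+i}$ to $v_{k+\Delta+i}$: a direct case analysis in $(\phi,\psi)$-coordinates shows that the two chords have slopes differing by $\tfrac12\Delta n^{-1/3}$ and cross inside the strip $\phi\in [-n,n]$ precisely when $i\in A_\Delta$. Choosing a translation $\mathbf{c}\in \ZZ^2$ that preserves this intersection property for the translated second pair and centers the crossing near $\0$, Proposition \ref{prop:50} reduces the estimate to a symmetric configuration where both chords pass through the origin with slopes $\theta + O(n^{-1/3})$ and $\theta + \tfrac12\Delta n^{-1/3}+O(n^{-1/3})$.

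\textbf{Overlap region and good event.} Set $R := n/(1+|\Delta|)^3$: at $\phi$-distance $r$ from $\0$, the two chords are $\psi$-separated by $\sim r|\Delta|n^{-1/3}$, which exceeds the KPZ transversal fluctuation scale $r^{2/3}$ precisely when $r \gtrsim R$. Let $U$ be an on-scale parallelogram of $\phi$-length $R$ and $\psi$-width of order $R^{2/3}$ centered at $\0$, with top and bottom faces $\ell_{\topp},\ell_{\bott}$ of length $\sim R^{2/3}$. Applying the mesoscopic transversal fluctuation estimate Proposition \ref{prop:40} to each geodesic separately together with a union bound over $O(\log n)$ dyadic scales, I obtain a good event $\cG$ with $\PP(\cG)\geq 1 - C\exp(-c(1+|\Delta|)^3)$ on which both geodesics $\Gamma_{u_j}^{v_k}$ and $\Gamma_{u_{j+i}+\mathbf{c}}^{v_{k+\Delta+i}+\mathbf{c}}$ enter and exit $U$ through $\ell_{\bott},\ell_{\topp}$ and, outside $U$, lie in disjoint subregions of the lattice separated by more than a constant multiple of the local transversal fluctuation.

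\textbf{Covariance estimate.} On $\cG$, I would use the interface decomposition
\[
T_{u_j}^{v_k} = \max_{p\in \ell_{\bott},\, q\in \ell_{\topp}}\bigl(T_{u_j}^{p}\lvert_{V_1} + T_{p}^{q}\lvert_{U} + T_{q}^{v_k}\lvert_{V_2}\bigr),
\]
with $V_1,V_2$ the regions strictly below and above $U$, and analogously for the translated second passage time on disjoint regions $V_1',V_2'$. Conditioning on the weights inside $U$ together with the interface endpoints, the outer pieces of the two passage times become independent, so the covariance is controlled by the variance of the inside-$U$ passage time, which Proposition \ref{prop:42} bounds by $O(R^{2/3}) = O(n^{2/3}/(1+|\Delta|)^2)$. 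The residual contribution from $\cG^c$ is bounded by Cauchy--Schwarz and Proposition \ref{prop:41} by $\sqrt{\Var T_{u_j}^{v_k}\cdot \Var T_{u_{j+i}}^{v_{k+\Delta+i}}}\cdot \PP(\cG^c)^{1/2} = O(n^{2/3})\cdot C\exp(-c(1+|\Delta|)^3/2)$, which is negligible against $n^{2/3}/(1+|\Delta|)^2$ when $|\Delta|\geq 1$; the case $|\Delta|=O(1)$ is immediate from the $O(n^{2/3})$ variance bound.

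\textbf{Main obstacle.} The delicate point is the disjoint-weights step: one must verify that on $\cG$ the outer pieces $T_{u_j}^{p}\lvert_{V_1}, T_{q}^{v_k}\lvert_{V_2}$ and the corresponding ones for the translated second pair genuinely depend on disjoint collections of lattice weights, and that equality in the interface decomposition displayed above holds a.s.\ on $\cG$. This is where I need Proposition \ref{prop:40} at all dyadic sub-scales rather than a single scale: the two geodesics must be separated by more than a constant times the local transversal fluctuation at every sub-scale in order for their weight supports outside $U$ to be disjoint, and this uniform separation down to the smallest scale is what the stretched-exponential tail in $\alpha$ in Proposition \ref{prop:40} furnishes.
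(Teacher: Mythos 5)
There is a genuine gap, and it is fatal to the quantitative bound. Your reduction via Proposition \ref{prop:50} translates the second pair so that the two chords \emph{cross near $\0$}, and you then take a deterministic window $U$ of $\phi$-length $R=n/(1+|\Delta|)^3$ around the crossing, claiming the geodesics are separated outside $U$ with probability $1-Ce^{-c(1+|\Delta|)^3}$. This probability bound is false. First, for $i\in A_\Delta$ the endpoints are interleaved ($u_{j+i}$ weakly to the left of $u_j$, $v_{k+\Delta+i}$ weakly to the right of $v_k$), so the two geodesics must genuinely intersect somewhere. Second, the location of that intersection is not concentrated at scale $n/|\Delta|^3$: at $\phi$-distance $r$ from the crossing of the chords, the chords are separated by $\sim r|\Delta|n^{-1/3}$, but the relevant transversal fluctuation of each geodesic there is governed by the distance to its \emph{endpoints} (Proposition \ref{prop:40} gives $O(d^{2/3})$ with $d\approx n$ for a bulk point), i.e.\ it is $\Theta(n^{2/3})$, not $\Theta(r^{2/3})$. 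Separation therefore only sets in at $r\gtrsim n/|\Delta|$, and the intersection point is spread over a window of length $\Theta(n/|\Delta|)$; the chance it lands in your deterministic $U$ of length $n/|\Delta|^3$ is roughly $|\Delta|^{-2}$, so $\PP(\cG^c)$ is close to $1$ and your error term $O(n^{2/3})\PP(\cG^c)^{1/2}$ is useless. If you enlarge $U$ to the correct deterministic localization scale $n/|\Delta|$, the conditional-independence argument only yields $\Cov\lesssim (n/|\Delta|)^{2/3}=n^{2/3}|\Delta|^{-2/3}$, which is too weak: plugged into the double sum in the proof of Proposition \ref{prop:1} it produces a polynomially divergent bound rather than $n^{2/3}\log n$.

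The missing idea is the choice of $\mathbf{c}$. The paper takes $\mathbf{c}=u_j-u_{j+i}$, so that after the translation the two paths share the \emph{starting point} $u_j$ (reducing to Proposition \ref{prop:36}). This changes everything: the divergence point of the two geodesics is now a deterministic common endpoint, near which Proposition \ref{prop:40} gives transversal fluctuation $O(r^{2/3})$ at $\phi$-distance $r$, so the separation scale really is $n/|\Delta|^3$ and, crucially, the overlap is a contiguous initial segment anchored at a deterministic location. Even then the paper does not condition on an interface; it writes each passage time as a telescoping sum of restricted passage times over nested half-planes $L_i,R_{-i}$ cut by lines through the intermediate points $p_{i,\Delta}=u_{j+i\Delta^{-2}}$ and the midpoint $v_{k+\Delta/2}$ (display \eqref{eq:405}), shows the leading terms $T_{u_j}^{v_k}\lvert_{L_0}$ and $T_{u_j}^{v_{k+\Delta}}\lvert_{R_0}$ are exactly independent, and bounds every increment's second moment by $Ce^{-ci}\Delta^{-2}n^{2/3}$ via Lemmas \ref{lem:113}, \ref{lem:97}, \ref{lem:95}, \ref{lem:98}, finishing with Cauchy--Schwarz. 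Your interface decomposition also has a secondary issue — $T_{u_j}^{v_k}$ is measurable with respect to all weights in the order interval, so it is not conditionally independent of the other passage time given $\cF_U$ off the good event, and this must be patched with restricted passage times as the paper does — but the primary obstruction is the one above: centering the crossing rather than merging an endpoint loses two powers of $|\Delta|$.
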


\begin{proposition}
  \label{prop:34}
There exist constants $C,c_1,c_2$ such that for all $n$ and all $j,k,i,\Delta\in [\![-2\varepsilon n^{1/3},2\varepsilon n^{1/3}]\!]$ additionally satisfying $i\not\in A_{\Delta}$, we have %
  \begin{equation}
    \label{eq:388}
    |\Cov(T_{u_{j}}^{v_k}, T_{u_{j+i}}^{v_{k+\Delta+i}})|\leq Cn^{2/3} \min(e^{-c_1\Delta^2},e^{-c_2\min_{a\in A_{\Delta}}|i-a|^3}).
  \end{equation}
\end{proposition}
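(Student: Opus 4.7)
Since $i\notin A_\Delta$, the segments $\LL_{u_j}^{v_k}$ and $\LL_{u_{j+i}}^{v_{k+\Delta+i}}$ remain separated in the $\psi$-direction by at least $\min_{a\in A_\Delta}|i-a|\cdot n^{2/3}$ at every $\phi\in[-n,n]$, and the endpoints are configured so that up-right paths between them need not cross -- in sharp contrast with the situation of Proposition \ref{prop:35}. Consequently the distributional symmetry of Proposition \ref{prop:50} cannot be applied directly (the resulting crossing condition would force $i\in A_\Delta$), and my plan is to produce the two exponential bounds by separate transversal-fluctuation-based arguments and then take the minimum.

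For the factor $\exp(-c_2\min_{a\in A_\Delta}|i-a|^3)$, set $m=\min_{a\in A_\Delta}|i-a|$ and pick disjoint parallelograms $U_1,U_2$ of transversal width $mn^{2/3}/3$ around the two baselines. Let $T_1=T_{u_j}^{v_k}$ and $T_2=T_{u_{j+i}}^{v_{k+\Delta+i}}$, and let $T_1^*,T_2^*$ denote the corresponding passage times restricted to $U_1,U_2$. Since $U_1\cap U_2=\emptyset$, the variables $T_1^*$ and $T_2^*$ are independent. By Proposition \ref{prop:39}, the event $\cG=\{\Gamma_{u_j}^{v_k}\subseteq U_1\}\cap\{\Gamma_{u_{j+i}}^{v_{k+\Delta+i}}\subseteq U_2\}$ has probability at least $1-Ce^{-cm^3}$, and on $\cG$ we have $T_i=T_i^*$. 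Writing
\begin{equation*}
\Cov(T_1,T_2)=\Cov(T_1-T_1^*,T_2)+\Cov(T_1^*,T_2-T_2^*)
\end{equation*}
and applying Cauchy-Schwarz, each term is bounded by the product of $L^2$ norms. Using the moderate deviation bounds of Propositions \ref{prop:41} and \ref{prop:42} to control the $L^4$ norms of the centered passage times $T_i-\EE T_i$ and $T_i^*-\EE T_i^*$, and using $\PP(\cG^c)\leq Ce^{-cm^3}$ to bound the $L^2$ norm of $T_i-T_i^*$ (which vanishes on $\cG$), one obtains the desired decay $Cn^{2/3}e^{-c_2 m^3}$.

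For the factor $\exp(-c_1\Delta^2)$, the relevant regime is when $m$ is small, say $O(1)$, but $|\Delta|$ is large, so that the baselines pinch together near one endpoint but diverge linearly in $|\Delta|$ toward the other. My approach is to repeat the disjoint-region construction above, but with wedge-shaped parallelograms whose transversal width grows linearly between $\Omega(n^{2/3})$ near the pinch and $\Omega(|\Delta|n^{2/3})$ near the far end. Applying the mesoscopic transversal fluctuation estimate of Proposition \ref{prop:40} on each dyadic scale along the length of the geodesic and taking a union bound yields a probability $Ce^{-c\Delta^2}$ for the geodesic to exit its wedge. The same Cauchy-Schwarz decomposition as above then produces the claimed $Cn^{2/3}e^{-c_1\Delta^2}$ bound.

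The main obstacle lies in the second step, namely extracting the exponent $\Delta^2$ in place of a naive $\Delta^3$: this requires careful multi-scale accounting of how the mesoscopic transversal-fluctuation tails compose along a geodesic whose enclosing wedge narrows toward the pinch point, and optimizing over the choice of dyadic scales. This improved exponent is essential downstream, because Proposition \ref{prop:34} is ultimately summed over large values of $\Delta$ in the proof of Proposition \ref{prop:1}, and a weaker bound would destroy the $\log n$ scaling of the second moment.
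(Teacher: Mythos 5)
Your proposal is correct and follows essentially the same route as the paper: both arguments confine the two geodesics to disjoint regions with probability $1-C\min(e^{-c_1\Delta^2},e^{-c_2 m^3})$ (the paper uses the single separating line $\LL_{u_{j+i/2}}^{v_{k+(\Delta+i)/2}}$ together with geodesic ordering, you use two disjoint tubes/wedges, which amounts to the same thing), and then exploit independence of the restricted passage times plus Cauchy--Schwarz with the moderate-deviation bounds of Propositions \ref{prop:41} and \ref{prop:42}. Your multi-scale wedge computation for the $e^{-c_1\Delta^2}$ factor is precisely the content of the paper's Lemma \ref{lem:113} (applied with offset $\Delta^2/4$): the worst lengthscale is $\ell\sim n/\Delta$, where the wedge width $\sim n^{2/3}+\ell\Delta n^{-1/3}$ measured against the transversal fluctuation $\ell^{2/3}$ gives exponent $(\Delta^{2/3})^3=\Delta^2$, confirming the step you flagged as the main obstacle.
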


\begin{proof}[Proof of Proposition \ref{prop:1} assuming Propositions \ref{prop:35}, \ref{prop:34}]
  In view of Lemma \ref{lem:7} and Lemma \ref{lem:8}, we need only show that for some constant $C$, we have
  \begin{equation}
    \label{eq:6}
    \sum_{|j_1|,|j_2|,|k_1|,|k_2|\leq \varepsilon n^{1/3}}\Cov ( T_{u_{j_1}}^{v_{k_1}},T_{u_{j_2}}^{v_{k_2}})\leq C n^{4/3}\log n.
  \end{equation}
 To prove the above, it suffices to show that there is a constant $C$ such that for any fixed $j_1,k_1$ as in the above, we have
  \begin{displaymath}
    \sum_{|j_2|,|k_2|\leq \varepsilon n^{1/3}}\Cov ( T_{u_{j_1}}^{v_{k_1}},T_{u_{j_2}}^{v_{k_2}})\leq C n^{2/3}\log n.
  \end{displaymath}
  The task now is to use Propositions \ref{prop:35}, \ref{prop:34} to obtain the above inequality. 

  Indeed, we can write
  \begin{align}
    \label{eq:15}
    &\sum_{|j_2|,|k_2|\leq \varepsilon n^{1/3}}\Cov ( T_{u_{j_1}}^{v_{k_1}},T_{u_{j_2}}^{v_{k_2}})\nonumber\\
    &\leq \sum_{|i|,|\Delta|\leq 2\varepsilon n^{1/3}}|\Cov(T_{u_{j_1}}^{v_{k_1}}, T_{u_{j_1+i}}^{v_{k_1+\Delta+i}})|\nonumber\\
    &= \sum_{|\Delta|\leq 2\varepsilon n^{1/3}}\left[\sum_{i\in A_\Delta}|\Cov(T_{u_{j_1}}^{v_{k_1}}, T_{u_{j_1+i}}^{v_{k_1+\Delta+i}})|+\sum_{ i\notin A_\Delta,|i|\leq 2\varepsilon n^{1/3} }|\Cov(T_{u_{j_1}}^{v_{k_1}}, T_{u_{j_1+i}}^{v_{k_1+\Delta+i}})|\right]. %
  \end{align}
  Now, we estimate both the above sums separately. Indeed, by using Proposition \ref{prop:35}, there is a constant $C>0$ such that for all $\Delta\in [\![-2\varepsilon n^{1/3}, 2\varepsilon n^{1/3}]\!]$, we have %
  \begin{equation}
    \label{eq:449}
    \sum_{i\in A_\Delta}|\Cov(T_{u_{j_1}}^{v_{k_1}}, T_{u_{j_1+i}}^{v_{k_1+\Delta+i}})|\leq (|\Delta|+1)\times (Cn^{2/3}(1+|\Delta|)^{-2})= C(1+|\Delta|)^{-1}n^{2/3},
  \end{equation}
  Now, regarding the second term in \eqref{eq:15}, by invoking Proposition \ref{prop:34}, we obtain that for some positive constants $C_1,c_1$,
\begin{equation}
  \label{eq:450}
  \sum_{ i\notin A_\Delta,|i|\leq 2\varepsilon n^{1/3} }|\Cov(T_{u_{j_1}}^{v_{k_1}}, T_{u_{j_1+i}}^{v_{k_1+\Delta+i}})|\leq C_1e^{-c_1\Delta^2}n^{2/3}.
\end{equation}
As a result, \eqref{eq:15} now implies that for some constant $C'$,
\begin{align}
  \label{eq:451}
  \sum_{|j_2|,|k_2|\leq \varepsilon n^{1/3}}\Cov ( T_{u_{j_1}}^{v_{k_1}},T_{u_{j_2}}^{v_{k_2}})\leq   \sum_{|\Delta|\leq 2\varepsilon n^{1/3}}( C(1+|\Delta|)^{-1}n^{2/3}+ C_1e^{-c_1\Delta^2}n^{2/3})\leq C'n^{2/3}\log n,
\end{align}
This completes the proof.
\end{proof}
It now remains to prove Propositions \ref{prop:35}, \ref{prop:34}, and this is the goal of the remainder of the section.
\subsection{Covariance estimate 1: The proof of Proposition \ref{prop:35}}
\label{sec:covar-estim-1}
The now is to prove Proposition \ref{prop:35}. We start by noting that the following simpler result suffices to prove the above proposition.
\begin{proposition}
  \label{prop:36}
  There exists a constant $C$ such that for $j,k,\Delta$ satisfying $|j|,|k|\leq 2\varepsilon n^{1/3}$ and $1\leq |\Delta|\leq 2\varepsilon n^{1/3}$ and all $n$, we have $\Cov(T_{u_j}^{v_k}, T_{u_j}^{v_{k+\Delta}})\leq C\Delta^{-2}n^{2/3}$.
\end{proposition}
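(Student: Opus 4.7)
The two geodesics $\Gamma_{u_j}^{v_k}$ and $\Gamma_{u_j}^{v_{k+\Delta}}$ share the starting point $u_j$ but have endpoints separated transversally by $\Delta n^{2/3}$. By KPZ scaling they should coalesce near $u_j$ up to a height of order $m := \lfloor n/|\Delta|^3 \rfloor$ (which is always at least $1$ in the range $|\Delta|\leq 2\varepsilon n^{1/3}$), after which they begin to separate; by a secondary transversal fluctuation argument, they live in disjoint parallelograms above a height $-n+Cm$ for a suitable large constant $C$. The variance contribution from each shared region is of order $m^{2/3} = n^{2/3}/|\Delta|^2$, and the aim is to show this dominates the covariance.

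Concretely, I will use a three-slab decomposition of the environment via the antidiagonal lines $L_m := \{p : \phi(p) = -n+m\}$ and $L_{Cm} := \{p : \phi(p) = -n+Cm\}$. Using Propositions \ref{prop:39} and \ref{prop:40}, with stretched-exponential probability both geodesics pass through a common interval $I \subset L_m$ of length $O(m^{2/3})$ --- noting that the two straight-line intersections $\LL_{u_j}^{v_k} \cap L_m$ and $\LL_{u_j}^{v_{k+\Delta}} \cap L_m$ differ by only $O(m^{2/3}/|\Delta|^2)$, well within the $m^{2/3}$ fluctuation scale --- then through a common slab between $L_m$ and $L_{Cm}$, and finally through two disjoint parallelograms $P_k^{++}, P_{k+\Delta}^{++}$ from $L_{Cm}$ to the respective endpoints $v_k, v_{k+\Delta}$. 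On this high-probability event, letting $z^\ell := \Gamma_{u_j}^{v_\ell} \cap L_m$ and $w^\ell := \Gamma_{u_j}^{v_\ell} \cap L_{Cm}$, one has the exact identity
\[
T_{u_j}^{v_\ell} \;=\; T_{u_j}^{z^\ell} + T_{z^\ell}^{w^\ell} + T_{w^\ell}^{v_\ell}\lvert_{P_\ell^{++}}, \qquad \ell \in \{k, k+\Delta\}.
\]

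The first two summands are measurable with respect to the environment below $L_{Cm}$, and by the moderate deviation estimates of Propositions \ref{prop:41} and \ref{prop:42} applied to an $O(m^{2/3})$-wide strip of length $Cm$, each has variance $O(m^{2/3}) = O(n^{2/3}/|\Delta|^2)$; the third summand is measurable with respect to the environment inside $P_\ell^{++}$, and since $P_k^{++}$ and $P_{k+\Delta}^{++}$ are disjoint, the third summands for $\ell = k$ and $\ell = k+\Delta$ are independent. Expanding $\Cov(T_{u_j}^{v_k}, T_{u_j}^{v_{k+\Delta}})$ bilinearly, all cross terms involving a third summand drop out by independence, and the remaining contribution from the shared first two summands is bounded by Cauchy--Schwarz at the desired size $O(n^{2/3}/|\Delta|^2)$.

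\textbf{Main obstacle.} The identity above only holds on a high-probability event, and the coalescence points $z^\ell, w^\ell$ are random. On the small-probability exceptional event, naive unconditional $L^2$ bounds on $T_{u_j}^{v_\ell}$ are only of order $n^{1/3}$, so a naive application of Cauchy--Schwarz with respect to the indicator of the good event would yield $O(n^{2/3})$ rather than the target $O(n^{2/3}/|\Delta|^2)$. The crux will be to absorb the exceptional contribution using the stretched-exponential decay of the transversal fluctuation tails (Propositions \ref{prop:39}, \ref{prop:40}) together with the moderate deviation moment bounds of Propositions \ref{prop:41} and \ref{prop:42}; quantitatively, I plan to calibrate the parameter $\alpha$ in the transversal fluctuation estimates to scale as a positive power of $|\Delta|$, so that the exceptional event's contribution to the covariance decays faster than $n^{2/3}/|\Delta|^2$ while the main event admits the clean three-piece decomposition described above.
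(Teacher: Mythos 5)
Your overall heuristic (the covariance should come only from the portion near $u_j$ where the two geodesics have not yet separated, a region of length $n/|\Delta|^3$ whose passage time fluctuates at scale $\Delta^{-1}n^{1/3}$) matches the intuition behind the paper's proof. But the decomposition you propose has a genuine gap in its independence structure, and it is precisely the gap that the paper's argument is engineered to avoid.

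The problem is that your splitting points $z^\ell$ and $w^\ell$ are defined as the crossing points of the \emph{geodesic} $\Gamma_{u_j}^{v_\ell}$ with the antidiagonal lines $L_m$ and $L_{Cm}$. The geodesic is a function of the entire environment, so $w^\ell$ is not measurable with respect to the weights below $L_{Cm}$; consequently $T_{u_j}^{z^\ell}+T_{z^\ell}^{w^\ell}=T_{u_j}^{w^\ell}$ is not measurable with respect to the environment below $L_{Cm}$, and $T_{w^\ell}^{v_\ell}\lvert_{P_\ell^{++}}$ is not measurable with respect to the environment inside $P_\ell^{++}$ alone (it depends on the random base point $w^\ell$, which in turn depends on everything). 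The step ``all cross terms involving a third summand drop out by independence'' therefore fails: conditionally on the environment below $L_{Cm}$, the two argmax locations $w^k,w^{k+\Delta}$ are still coupled through the shared profile $w\mapsto T_{u_j}^{w}$, and the third summands are coupled through them. One can try to repair this by replacing the random-point decomposition with $T_{u_j}^{v_\ell}=\max_{w}\bigl(T_{u_j}^{w}\lvert_{\mathrm{slab}}+T_{w}^{v_\ell}\lvert_{P_\ell^{++}}\bigr)$ over a deterministic window of $w$'s, but then you are computing the covariance of two maxima of sums sharing a common first profile, and controlling that requires additional arguments about the fluctuation of the profile and the location of the maximizers that your plan does not supply. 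Your treatment of the exceptional event (calibrating $\alpha$ to a power of $|\Delta|$) is fine and mirrors what the paper does, but it is not the main difficulty.

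The paper sidesteps the measurability issue by splitting the plane \emph{longitudinally} rather than transversally: it takes the line from the common endpoint $u_j$ to the midpoint $v_{k+\Delta/2}$, and uses the \emph{restricted} passage times $T_{u_j}^{v_k}\lvert_{L_0}$ and $T_{u_j}^{v_{k+\Delta}}\lvert_{R_0}$, where $L_0,R_0$ are the deterministic half-strips to the left and right of that line. These are maxima over paths confined to deterministic disjoint regions, hence genuinely independent (Lemma~\ref{lem:145}); the errors $T_{u_j}^{v_k}-T_{u_j}^{v_k}\lvert_{L_0}$ are shown to be $O(\Delta^{-1}n^{1/3})$ in $L^2$ (Lemma~\ref{lem:97}) because the geodesic can only flirt with the separating line within distance $O(n/\Delta^3)$ of $u_j$, and a further telescoping over the nested regions $L_i,R_{-i}$ (Lemmas~\ref{lem:95}, \ref{lem:98}) keeps all the cross terms summable. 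If you want to keep your slab picture, you would need to convert it into restricted passage times over deterministic regions and then still confront the non-independence of the two restrictions on the shared slab; the left/right split through the common point $u_j$ is what makes the main terms exactly independent.
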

\begin{proof}[Proof of Proposition \ref{prop:35} assuming Proposition \ref{prop:36}]
First note that the case $\Delta=0$ follows immediately by the Cauchy-Schwartz inequality, and thus, we can assume $\Delta\neq 0$. Now, we observe that for $i\in A_{\Delta}$, we have $\LL_{u_j}^{v_k}\cap \LL_{u_{j+i}}^{v_{k+\Delta + i}}\neq \emptyset$. As a result, by using the integrability from Proposition \ref{prop:50}, for any $j,k,i,\Delta$ as in the statement of the proposition which additionally satisfy $in^{2/3}\in \ZZ$, we must have $u_{j+i}-u_j=v_{k+\Delta+i}-v_{k+\Delta}$ and therefore must have
  \begin{equation}
    \label{eq:458}
    \Cov(T_{u_j}^{v_{k}}, T_{u_{j+i}}^{v_{k+\Delta + i}})= \Cov(T_{u_j}^{v_k}, T_{u_j}^{v_{k+\Delta}})\leq C\Delta^{-2}n^{2/3},
  \end{equation}
  where the last inequality follows by Proposition \ref{prop:36}.
  Now, in the general case when $in^{2/3}\notin \ZZ$, there is a slight rounding off error since we might not exactly have $u_{j+i}-u_j=v_{k+\Delta+i}-v_{k+\Delta}$. However, we note that $v_{k+\Delta+i}- (u_{j+i}-u_{i})= v_{k+\Delta'}$ for some $\Delta'$ satisfying $|\Delta'-\Delta|\leq 2n^{-2/3}$. Thus by using Proposition \ref{prop:50} along with Proposition \ref{prop:35}, for some constant $C'$, we must have
  \begin{equation}
    \label{eq:652}
    \Cov(T_{u_j}^{v_{k}}, T_{u_{j+i}}^{v_{k+\Delta + i}})= \Cov(T_{u_j}^{v_{k}}, T_{u_j}^{v_{k+\Delta'}})\leq C\Delta'^{-2}n^{2/3}\leq C(\Delta - 2n^{-2/3})^{-2}n^{2/3}\leq C'\Delta^{-2/3}n^{2/3},
  \end{equation}
  where we use $|\Delta|\geq 1$ for the last inequality.
\end{proof}
\begin{figure}
  \centering
  \includegraphics[width=0.7\linewidth]{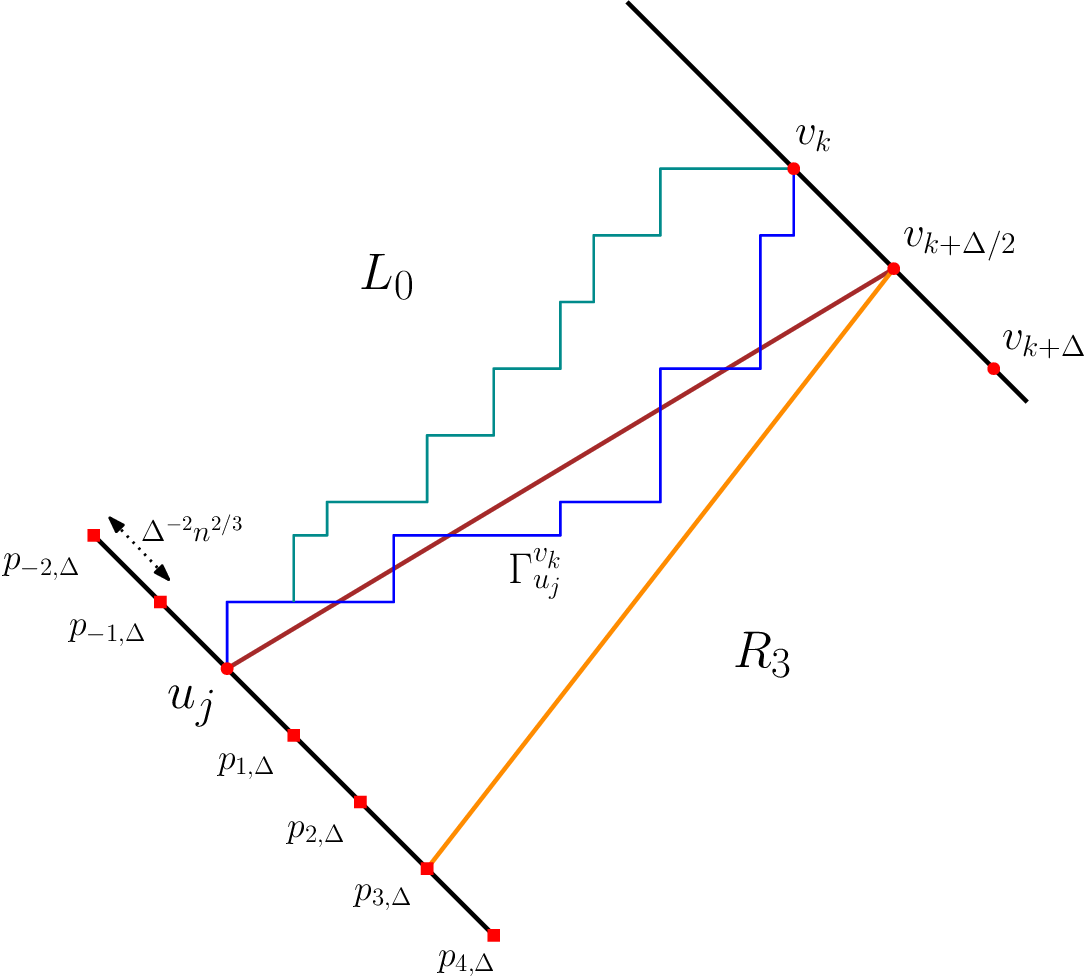}
  \caption{The separation between consecutive points $p_{i,\Delta}$ and $p_{i+1,\Delta}$ is roughly $\Delta^{-2}n^{2/3}$. The region $L_i$ is to the left of the line connecting $p_{i,\Delta}$ and $v_{k+\Delta/2}$ while the region $R_i$ is to the right of it. Here, the region to the left of the brown line is $L_0$ while the region to the right of the orange line is $R_3$. The blue path here is the geodesic $\Gamma_{u_j}^{v_k}$ while the cyan path is one which attains $T_{u_j}^{v_k}\lvert_{L_0}$. Lemma \ref{lem:97} shows stretched exponential tails for $T_{u_j}^{v_k}-T_{u_j}^{v_k}\lvert_{L_0}$ at the scale $\Delta^{-1} n^{1/3}$. }
  \label{fig:ppoints}
\end{figure}
 In order to prepare for the proof of Proposition \ref{prop:36}, we introduce some notation. For points $p,q$ with $-\phi(p)=\phi(q)=n$, we shall use $L_{p,q},R_{p,q}$ to denote the part of $\{z: |\phi(z)|\leq n\}$ strictly to the left and right of $\LL_{p}^{q}$ respectively. That is, we have the disjoint union
\begin{equation}
  \label{eq:398}
  \{z\in \RR^2: |\phi(z)|\leq n\}= L_p^q\cup \LL_{p}^{q}\cup R_p^q.
\end{equation}
Further, we define the points $p_{i,\Delta}=u_{j+i\Delta^{-2}}$. %
For convenience, we now introduce the shorthands:
  \begin{equation}
    \label{eq:402}
    L_{i}= L_{p_{i,\Delta}}^{v_{k+\Delta/2}}, R_{i}=R_{p_{i,\Delta}}^{v_{k+\Delta/2}}.
  \end{equation}
  
  We refer the reader to Figure \ref{fig:ppoints} for a depiction of the objects just defined above. We note that $p_{i,\Delta},L_i,R_i$ also depend on $j,k$ but this dependency is suppressed to avoid clutter. Indeed, for the remainder of this section, we shall simply think of $j,k$ as fixed and satisfying $|j|,|k|\leq 2\varepsilon n^{1/3}$. Now, with the above notation at hand, we can write %
  \begin{align}
    \label{eq:405}
    T_{u_j}^{v_k}&=T_{u_j}^{v_k}\lvert_{L_0}+\sum_{i=0}^{\Delta-1} (T_{u_j}^{v_k}\lvert_{L_{i+1}}- T_{u_j}^{v_k}\lvert_{L_{i}})+ (T_{u_j}^{v_k}-T_{u_j}^{v_k}\lvert_{L_\Delta}),\nonumber\\
    T_{u_j}^{v_{k+\Delta}}&=T_{u_j}^{v_{k+\Delta}}\lvert_{R_0}+\sum_{i=0}^{\Delta-1} (T_{u_j}^{v_{k+\Delta}}\lvert_{R_{-(i+1)}}- T_{u_j}^{v_{k+\Delta}}\lvert_{R_{-i}}) + (T_{u_j}^{v_{k+\Delta}}- T_{u_j}^{v_{k+\Delta}}\lvert_{R_{-\Delta}}).
  \end{align}
  The utility of the above decomposition is that since $L_0$ and $R_0$ are disjoint, the ``main terms'' $T_{u_j}^{v_{k+\Delta}}\lvert_{R_0}$ and $T_{u_j}^{v_k}\lvert_{L_0}$ are almost independent, as we record in the following trivial lemma.
  \begin{lemma}
    \label{lem:145}
    For $\Delta\geq 1$, the random variables $T_{u_j}^{v_k}\lvert_{L_0}-\omega_{u_j}, T_{u_j}^{v_{k+\Delta}}\lvert_{R_0}-\omega_{u_j}$ are measurable with respect to $\{\omega_z\}_{z\in L_0}$ and $\{\omega_z\}_{z\in R_0}$ respectively and are thus independent.
  \end{lemma}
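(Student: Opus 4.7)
The plan is to proceed directly from the definition of the restricted passage time \eqref{eq:403}. First I would check that $v_k \in L_0$ and $v_{k+\Delta} \in R_0$: since $\phi(v_k) = \phi(v_{k+\Delta/2}) = \phi(v_{k+\Delta}) = n$ and $\psi(v_k) < \psi(v_{k+\Delta/2}) < \psi(v_{k+\Delta})$ for $\Delta \geq 1$, the points $v_k$ and $v_{k+\Delta}$ lie strictly on opposite sides of the line $\LL_{u_j}^{v_{k+\Delta/2}}$, placing them in $L_0$ and $R_0$ respectively, while $u_j = p_{0,\Delta}$ lies on the line itself.

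The main step is then a routine measurability observation. By \eqref{eq:403}, the variable $T_{u_j}^{v_k}\lvert_{L_0}$ is the maximum of $\wgt(\gamma)$ over up-right paths $\gamma\colon u_j \to v_k$ satisfying $\gamma \setminus \{u_j, v_k\} \subseteq L_0$. Since $v_k \in L_0$, the full vertex set of any such path is contained in $L_0 \cup \{u_j\}$, and in particular every such path contains $u_j$ exactly once. Therefore
\[
T_{u_j}^{v_k}\lvert_{L_0} - \omega_{u_j} = \max_{\gamma\colon u_j \to v_k,\; \gamma\setminus\{u_j\}\subseteq L_0} \sum_{z \in \gamma \setminus \{u_j\}} \omega_z
\]
is measurable with respect to $\{\omega_z\}_{z \in L_0}$. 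The same argument applied with $(L_0, v_k)$ replaced by $(R_0, v_{k+\Delta})$ shows that $T_{u_j}^{v_{k+\Delta}}\lvert_{R_0} - \omega_{u_j}$ is measurable with respect to $\{\omega_z\}_{z \in R_0}$.

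Finally, $L_0 \cap R_0 = \emptyset$ by construction (they are defined as the strictly left and strictly right half-spaces of the same line), so the two $\sigma$-algebras $\sigma(\{\omega_z\}_{z \in L_0})$ and $\sigma(\{\omega_z\}_{z \in R_0})$ are independent by the i.i.d.\ property of $\omega$. There is no genuine obstacle here; the only point to double-check is that $v_k \in L_0$ and $v_{k+\Delta} \in R_0$, which is why the hypothesis $\Delta \geq 1$ is needed (for $\Delta = 0$ both endpoints would coincide with the boundary line and the measurability claim would fail).
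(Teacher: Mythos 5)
Your proof is correct and is exactly the intended argument: the paper labels this a "trivial lemma" and gives no proof, relying on the observation after \eqref{eq:403} that $T_u^v\lvert_U$ depends only on $\{\omega_z\}_{z\in U\cap\ZZ^2}\cup\{\omega_u,\omega_v\}$. You correctly supply the two details that make it work — that $v_k\in L_0$ and $v_{k+\Delta}\in R_0$ for $\Delta\geq 1$ (so the endpoint weights $\omega_{v_k},\omega_{v_{k+\Delta}}$ are already covered), and that every admissible path contains $u_j$ exactly once, so subtracting $\omega_{u_j}$ removes the only remaining shared dependence.
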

  
In order to use \eqref{eq:405}, it will be important to us that the terms $ T_{u_j}^{v_k}-  T_{u_j}^{v_k}\lvert_{L_0}$ and $T_{u_j}^{v_{k+\Delta}}- T_{u_j}^{v_{k+\Delta}}\lvert_{R_0}$ both be of the right scale $\Delta^{-1}n^{1/3}=(\Delta^{-3}n)^{1/3}$. We now state a lemma achieving the above.
    \begin{lemma}
    \label{lem:97}
There exist constants $C,c$ such that for all $n$ and all $1\leq \Delta\leq 2\varepsilon n^{1/3}$, all $|j|, |k|\leq 3\varepsilon n^{1/3}$, and all $\alpha>0$,
    \begin{align}
      \label{eq:416}
      &\PP(T_{u_j}^{v_k}- T_{u_j}^{v_k}\lvert_{L_0}\geq \alpha \Delta^{-1}n^{1/3})\leq Ce^{-c\sqrt{\alpha}},\nonumber\\
      &\PP(T_{u_j}^{v_{k+\Delta}}- T_{u_j}^{v_{k+\Delta}}\lvert_{R_0}\geq \alpha \Delta^{-1} n^{1/3})\leq Ce^{-c\sqrt{\alpha}}.
    \end{align}
  \end{lemma}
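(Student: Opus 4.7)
\textbf{Proof plan for Lemma \ref{lem:97}.} We prove the first inequality; the second is symmetric. The natural scale to have in mind is $\ell = c_0 n/\Delta^3$ (with $c_0$ a large constant depending on $\theta,\varepsilon$), chosen so that $\ell^{1/3}\asymp n^{1/3}/\Delta$ is exactly the target scale of the loss. Pick a lattice point $w$ at $\phi(w)=-n+\ell$ with $\psi(w)$ close to $\LL_{u_j}^{v_k}(-n+\ell)$; since the separation between $\LL_{u_j}^{v_k}$ and $\partial L_0=\LL_{u_j}^{v_{k+\Delta/2}}$ at $\phi=-n+\ell$ equals $\ell\Delta n^{-1/3}/4 \asymp \ell^{2/3}$, the point $w$ lies well inside $L_0$, separated from the boundary by exactly the KPZ transversal scale for longitudinal distance $\ell$.

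The core of the proof is the concatenation lower bound $T_{u_j}^{v_k}\lvert_{L_0}\geq T_{u_j}^w\lvert_{L_0}+T_w^{v_k}\lvert_{L_0}-\omega_w$, which combined with the trivial upper bound $T_{u_j}^{v_k}\geq T_{u_j}^w+T_w^{v_k}$ yields
\[
T_{u_j}^{v_k}-T_{u_j}^{v_k}\lvert_{L_0}\leq \bigl[T_{u_j}^{v_k}-T_{u_j}^w-T_w^{v_k}\bigr]+\bigl[T_{u_j}^w-T_{u_j}^w\lvert_{L_0}\bigr]+\bigl[T_w^{v_k}-T_w^{v_k}\lvert_{L_0}\bigr]+\omega_w.
\]
The first term is the twin-peaks gap for the decomposition through the line $\phi=-n+\ell$: by Proposition~\ref{prop:40} the actual maximizer on this line is within $O(\ell^{2/3})$ of $\psi(w)$ with stretched-exponential tails, and the local regularity estimate Proposition~\ref{prop:37} applied to both profiles $y\mapsto T_{u_j}^{(y,-n+\ell)}$ and $y\mapsto T_{(y,-n+\ell)}^{v_k}$ then bounds this term at scale $\alpha\ell^{1/3}$. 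The third term vanishes on the event that $\Gamma_w^{v_k}\subseteq L_0$; a scale-by-scale comparison of the transversal fluctuation (bounded by $s^{2/3}$ at longitudinal distance $s$ from $w$) to the separation $(\ell+s)\Delta n^{-1/3}/4$ from $\partial L_0$ shows that the ratio is uniformly bounded below for large $c_0$, so a dyadic union bound using Proposition~\ref{prop:40} controls this term with strong tails. The fourth term is $\mathrm{Exp}(1)$.

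The main technical difficulty, and the main obstacle, is the second term $T_{u_j}^w-T_{u_j}^w\lvert_{L_0}$, because $u_j$ lies on $\partial L_0$: the wedge $L_0\cap\{\phi\leq-n+\ell\}$ is triangular with vanishing width at $u_j$ and width $\asymp\ell^{2/3}$ at $\phi=-n+\ell$. We handle this by introducing a parallelogram-like region $U\subseteq L_0$ with sides along $\LL_{u_j}^{v_{k\pm\Delta/2}}$ (degenerating to the single point $u_j$ at the $\phi=-n$ side and of transversal width $\asymp\ell^{2/3}$ at the $\phi=-n+\ell$ side), noting $T_{u_j}^w\lvert_{L_0}\geq T_{u_j}^w\lvert_U$, and then writing
\[
T_{u_j}^w-T_{u_j}^w\lvert_U\leq \bigl[T_{u_j}^w-\EE T_{u_j}^w\bigr]+\bigl[\EE T_{u_j}^w-T_{u_j}^w\lvert_U\bigr].
\]
The first bracket has upper tail $e^{-c\alpha^{3/2}}$ at scale $\alpha\ell^{1/3}$ by Proposition~\ref{prop:41}(1), and the second has tail $e^{-c\alpha}$ at scale $\alpha\ell^{1/3}$ by the parallelogram lower-tail estimate Proposition~\ref{prop:42}(3) applied at longitudinal scale $\ell$ (after checking that $U$ is large enough in the transverse direction for the hypothesis of Proposition~\ref{prop:42} to hold, which it is at the KPZ scale $\ell^{2/3}$). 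Summing the four contributions and bounding each by the weakest tail $e^{-c\sqrt{\alpha}}$ gives the claimed estimate, which is stronger than necessary but suffices for the covariance bound Proposition~\ref{prop:36}.
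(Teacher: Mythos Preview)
Your approach uses a \emph{fixed} waypoint $w$ at longitudinal distance $\ell=c_0 n/\Delta^3$, whereas the paper crosses at an $\alpha$-dependent line, at distance $\sqrt{\alpha}\,n/\Delta^3$, and uses the random point $\tilde z_\alpha$ where $\Gamma_{u_j}^{v_k}$ itself meets that line. On the event $E_\alpha$ that the geodesic beyond $\tilde z_\alpha$ already lies in $L_0$ (Lemma~\ref{lem:113} gives $\PP(E_\alpha^c)\le Ce^{-c\sqrt{\alpha}}$ precisely because of the $\sqrt{\alpha}$ in the crossing height), concatenating a restricted path $u_j\to\tilde z_\alpha$ with $\Gamma_{\tilde z_\alpha}^{v_k}$ yields $T_{u_j}^{v_k}-T_{u_j}^{v_k}|_{L_0}\le T_{u_j}^{\tilde z_\alpha}-T_{u_j}^{\tilde z_\alpha}|_{L_0}$, a single short-range term controlled uniformly over the possible locations of $\tilde z_\alpha$ by Proposition~\ref{prop:42}.

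The fixed-line decomposition fails at your third term $T_w^{v_k}-T_w^{v_k}|_{L_0}$. Your scale-by-scale comparison shows the ratio $\mathrm{separation}/\mathrm{transversal\ scale}=[(\ell+s)\Delta n^{-1/3}/4]/s^{2/3}$ is bounded below by a constant $\asymp c_0^{1/3}$ (the minimum occurs near $s=2\ell$), so a dyadic union bound with Proposition~\ref{prop:40} gives $\PP(\Gamma_w^{v_k}\not\subseteq L_0)\le Ce^{-c\,c_0}$. But this is a fixed constant in $\alpha$: for large $\alpha$ the tail of the third term plateaus at $e^{-c\,c_0}$ rather than decaying like $e^{-c\sqrt{\alpha}}$, and on the complement you offer no further control (nor can Proposition~\ref{prop:42}(3) supply one, since a parallelogram from $w$ to $v_k$ inside $L_0$ would need width $\lesssim \ell^{2/3}\ll (2n-\ell)^{2/3}$). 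Making this probability decay in $\alpha$ forces $c_0\asymp\sqrt{\alpha}$, which is exactly the paper's $\alpha$-dependent crossing. A lesser issue: Proposition~\ref{prop:42}(3) is stated for genuine parallelograms, not your degenerate triangle $U$; the correct move (implicit in the paper as well) is to take an off-center parallelogram inside $L_0$ with $u_j$ sitting at a corner of the short side.
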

  In order to prove the above lemma, we shall need the following transversal fluctuation estimates involving the $L$ and $R$ sets introduced above.
  \begin{lemma}
    \label{lem:113}
    There exist constants $C,c$ such that for all $n$, and all $\Delta,j,k,i$ satisfying $1\leq \Delta\leq 2\varepsilon n^{1/3}$, $|j|,|k|\leq 3\varepsilon n^{1/3}$ and $0\leq i\leq \Delta^3/4$, we have
    \begin{equation}
      \label{eq:509}
      \PP(\Gamma_{u_j}^{v_k}\subseteq L_i)\geq 1-Ce^{-ci}.
      \end{equation}
      Further, for $\alpha\leq \Delta^3$, we have
    \begin{equation}
      \label{eq:515}
      \PP(\Gamma_{u_j}^{v_k}\cap \{z: \phi(z)\geq -n+ \alpha \Delta^{-3}n\}\subseteq L_0)\geq 1-Ce^{-c\alpha}.
    \end{equation}
  \end{lemma}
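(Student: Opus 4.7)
The plan is to combine a dyadic decomposition in the $\phi$-direction with the mesoscopic transversal fluctuation estimate of Proposition~\ref{prop:40}. Denote by $g_i(s)$ the signed $\psi$-distance at column $\phi=-n+s$ between $\LL_{u_j}^{v_k}$ and $\LL_{p_{i,\Delta}}^{v_{k+\Delta/2}}$; a direct computation using $\psi(p_{i,\Delta})-\psi(u_j)=i\Delta^{-2}n^{2/3}$ and $\psi(v_{k+\Delta/2})-\psi(v_k)=(\Delta/2)n^{2/3}$ yields the linear interpolation
\[
g_i(s)=i\Delta^{-2}n^{2/3}\cdot\frac{2n-s}{2n}+\frac{\Delta}{2}n^{2/3}\cdot\frac{s}{2n}.
\]
Since $i\leq\Delta^3/4$ implies $\Delta/2-i\Delta^{-2}\geq\Delta/4$, one has the convenient lower bound $g_i(s)\geq\max(i\Delta^{-2}n^{2/3},\,\Delta s/(8n^{1/3}))$. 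Writing $D(r)$ for the $\psi$-deviation of $\Gamma_{u_j}^{v_k}$ from $\LL_{u_j}^{v_k}$ at column $\phi=r$, the event $\{\Gamma_{u_j}^{v_k}\not\subseteq L_i\}$ forces $D(-n+s)>g_i(s)$ for some $s\in[0,2n]$.

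I would then decompose $[0,2n]$ into $O(\log n)$ dyadic bands and control $\sup D$ uniformly on each. Within a band of scale $s\in[1,n]$, I would upgrade the pointwise estimate of Proposition~\ref{prop:40} to a bound of the form
\[
\PP\!\left(\sup_{r\in[-n+s,\,-n+2s]}|D(r)|\geq \alpha s^{2/3}\right)\leq Ce^{-c\alpha^3};
\]
this is routine: first locate the entry/exit columns of the geodesic into the band using Proposition~\ref{prop:40}, then apply the sup estimate of Proposition~\ref{prop:39} to the sub-geodesic within the band, which has length $\sim s$ and slope bounded away from $0$ and $\infty$ by our assumptions on $j,k$. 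For $s\in[n,2n]$ one uses the symmetric $v_k$-end statement with $2n-s$ in place of $s$. With $\alpha_s=g_i(s)/s^{2/3}$, the key algebraic fact is that $\max(a,b)^3\geq ab^2$; applied with $a=i\Delta^{-2}n^{2/3}/s^{2/3}$ and $b=\Delta s^{1/3}/(8n^{1/3})$ this gives
\[
\alpha_s^3\geq ab^2=\frac{i}{64}
\]
uniformly in $s\in[1,n]$. Because $a$ is decreasing and $b$ is increasing in $s$, the per-band probabilities $Ce^{-c\alpha_s^3}$ form a doubly geometric sequence around the crossover scale $s^\star=8in/\Delta^3$, so the dyadic sum is $\leq Ce^{-c'i}$ rather than $O(\log n)\cdot Ce^{-ci/64}$. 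Bands with $s\in[n,2n]$ give only stronger tails since $g_i\geq\Delta n^{2/3}/8$ and $\Delta^3\geq 4i$.

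The second assertion is handled identically, with the gap now $g_0(s)=\Delta s/(4n^{1/3})$ (vanishing at $s=0$): restricting to $s\geq\alpha\Delta^{-3}n$, the ratio $g_0(s)/s^{2/3}=\Delta s^{1/3}/(4n^{1/3})$ is at least $\alpha^{1/3}/4$ at the threshold and only larger for bigger $s$, giving $Ce^{-c\alpha}$ after the dyadic sum; the $v_k$-side is easier because the gap there is $\sim\Delta n^{2/3}$. The only genuine obstacle is the upgrade from pointwise to sup-in-band transversal fluctuation, but this is a standard application of Propositions~\ref{prop:39} and~\ref{prop:40}; everything else is the short algebra summarized above.
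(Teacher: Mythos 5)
Your proposal is correct, and while it follows the same skeleton as the paper's argument -- the linear gap function $g_i$ (the paper's $f_i$), the lower bound by the two competing terms $i\Delta^{-2}n^{2/3}$ and $\Delta s n^{-1/3}$, Proposition~\ref{prop:40} for the transversal fluctuations, and a summation organized around the crossover scale $s^\star\asymp in\Delta^{-3}$ -- it differs in one substantive way. The paper union-bounds over every one of the $2n+1$ integer anti-diagonals, applies the pointwise estimate of Proposition~\ref{prop:40} to each, and then sums the resulting series; you instead group the anti-diagonals into $O(\log n)$ dyadic bands, prove a sup-in-band fluctuation bound of the form $\PP(\sup_{r}|D(r)|\geq \alpha s^{2/3})\leq Ce^{-c\alpha^3}$, and sum over bands using the geometric growth of $\alpha_s^3$ away from $s^\star$. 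Your extra chaining step is not wasted effort: near $s^\star$ there are $\Theta(in\Delta^{-3})$ columns at which the pointwise bound is only $e^{-\Theta(i)}$, so the per-column sum in \eqref{eq:513} carries a prefactor of order $n\Delta^{-3}$ (e.g.\ with $i=\Delta=1$ every $r\in[n/2,2n]$ contributes a constant), and the banded sup estimate is exactly what collapses this entropy to $O(\log n)$ trials with geometric decay, yielding the clean $Ce^{-ci}$. The one step you defer -- upgrading Proposition~\ref{prop:40} to a sup over a band -- is indeed routine: fix the geodesic's entry and exit columns of the band to within $\alpha s^{2/3}/2$ of the line via two applications of Proposition~\ref{prop:40}, then control the sub-geodesic between them via Proposition~\ref{prop:39}, handling the randomness of its endpoints by a grid of deterministic endpoints at spacing $s^{2/3}$ together with geodesic ordering (the polynomial-in-$\alpha$ cost of the grid is absorbed into the exponential). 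This is precisely the mechanism behind the sup-form mesoscopic estimates already quoted in the paper (Proposition~\ref{prop:53}, \cite[Lemma 2.4]{BBBK25}). Your treatment of \eqref{eq:515} is likewise correct, with the exponent $\Delta^3 s/n$ equal to $\alpha$ at the threshold $s=\alpha\Delta^{-3}n$ and growing geometrically along the dyadic bands, and the far end controlled by $\Delta^3\geq\alpha$.
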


  \begin{proof}
We first prove \eqref{eq:509}. For $r\in [\![0,2n]\!]$, Let $z_r$ be the unique point satisfying $z_r\in \Gamma_{u_j}^{v_k}$ and $\phi(z_r)=r-n$ and define $f_i(r)$ by
    \begin{equation}
      \label{eq:511}
      f_i(r)=(1-\frac{r}{2n})(in^{2/3}\Delta^{-2})+ \frac{r}{2n}(\Delta n^{2/3}/2)
    \end{equation}
    Since $i\leq \Delta^{3}/4$, it can be checked that we always have
    \begin{equation}
      \label{eq:512}
      f_i(r)\geq in^{2/3}\Delta^{-2}+ r\Delta n^{-1/3}/16.
    \end{equation}
    Now, by using the mesoscopic transversal fluctuation estimate Proposition \ref{prop:40} along with a union bound, we have
    \begin{align}
      \label{eq:510}
      \PP(\Gamma_{u_j}^{v_k}\not\subseteq L_i)&\leq \sum_{r=0}^{2n}\PP(z_r\notin L_i)\leq \sum_{r=0}^{2n}C\exp(-c\left(\frac{f_i(r)}{r^{2/3}}\right)^3)\nonumber\\
      &\leq \sum_{r=0}^{2n}C\exp(-c(r^{-2/3}in^{2/3}\Delta^{-2}+r^{1/3}\Delta n^{-1/3}/16)^3).
    \end{align}
    Now, note that when seen as a function of $r$, the expression $r^{-2/3}in^{2/3}\Delta^{-2}+r^{1/3}\Delta n^{-1/3}/16$ is minimised when $r= 32in\Delta^{-3}$, and for this value of $r$, it is equal to $c'i^{1/3}$ for an explicit constant $c'$. As a result, by a simple computation involving an exponential series, we obtain that for some constants $C_1,c_1$,
    \begin{equation}
      \label{eq:513}
\sum_{r=0}^{2n}C\exp(-c(r^{-2/3}in^{2/3}\Delta^{-2}+r^{1/3}\Delta n^{-1/3}/16)^3)\leq C_1e^{-c_1 (i^{1/3})^3}=C_1e^{-c_1i}.      
\end{equation}
On combining the above with \eqref{eq:510}, the proof of \eqref{eq:509} is complete.

We now come to the proof of \eqref{eq:515}. Again, by using Proposition \ref{prop:40} along with a union bound, we have
\begin{align}
  \label{eq:516}
  \PP(\Gamma_{u_j}^{v_k}\cap \{z: \phi(z)\geq -n+ \alpha \Delta^{-3}n\}\not\subseteq L_0)&\leq \sum_{r=\lfloor \alpha \Delta^{-3} n\rfloor}^{2n}\PP(z_r\notin L_0)\nonumber\\
                                                                                          & \leq \sum_{r=\lfloor \alpha \Delta^{-3} n\rfloor }^{2n}C_1\exp(-c_1(\frac{r\Delta n^{-1/3}}{r^{2/3}})^3)\nonumber\\
  &\leq Ce^{-c\alpha}.
\end{align}
This completes the proof.
\end{proof}
We now provide the proof of Lemma \ref{lem:97}.
  \begin{proof}[Proof of Lemma \ref{lem:97}]
We only prove the first inequality-- the second one can be obtained by an analogous argument.  We consider the cases $\alpha> \Delta^3$ and $\alpha\leq \Delta^3$ separately. To handle the former case, we simply note that for some constants $C,c$,    \begin{align}
      \label{eq:667}
      \PP(T_{u_j}^{v_k}- T_{u_j}^{v_k}\lvert_{L_0}\geq \alpha \Delta^{-1}n^{1/3})&\leq \PP( T_{u_j}^{v_k}- \EE T_{u_j}^{v_k} \geq \alpha \Delta^{-1}n^{1/3}/2) + \PP(T_{u_j}^{v_k}\lvert_{L_0}- \EE T_{u_j}^{v_k}\leq -\alpha \Delta^{-1} n^{1/3}/2)\nonumber\\
      &\leq Ce^{-c \alpha \Delta^{-1}}\leq Ce^{-c\alpha^{2/3}},
    \end{align}
    where the above uses the moderate deviation estimates in Proposition \ref{prop:42} along with $\alpha >\Delta^{3}$.
    \begin{figure}
      \centering
      \includegraphics[width=0.7\linewidth]{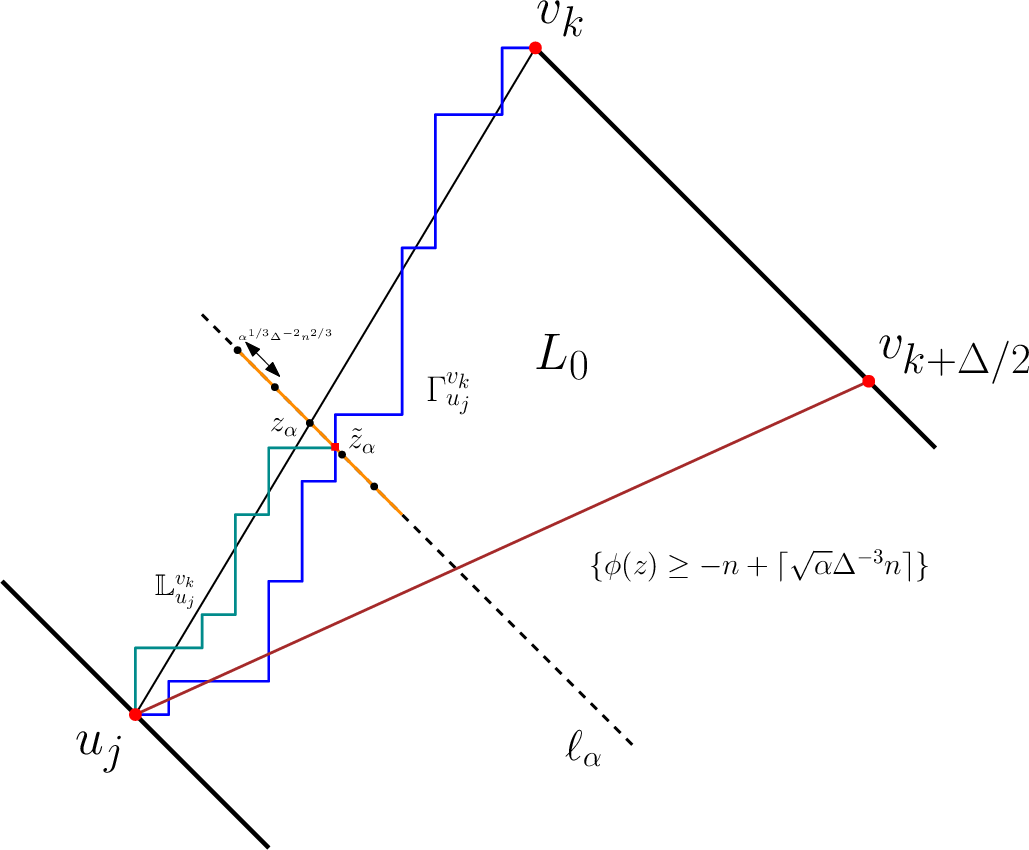}
      \caption{We consider the point $\tilde z_{\alpha}$ where the geodesic $\Gamma_{u_j}^{v_k}$ intersects the line $\ell_\alpha$. While the geodesic $\Gamma_{u_j}^{v_k}$ here does not lie in the region $L_0$, we do have $\Gamma_{u_j}^{v_k}\cap \{\phi(z)\geq -n + \lceil \sqrt{\alpha} \Delta^{-3} n\rceil \}\subseteq L_0$. We now consider a path $\gamma$ (cyan) from $u_j$ to $\tilde z_{\alpha}$ lying within $L_0\cup \{u_j\}$ and attaining $T_{u_j}^{\tilde z_\alpha}\lvert_{L_0}$ and concatenate $\gamma$ with $\Gamma_{\tilde z_{\alpha}}^{v_k}$ to obtain a path from $u_j$ to $v_k$ whose length is within $\alpha \Delta^{-1} n^{1/3}$ of the passage time $T_{u_j}^{v_k}$.
      }
      \label{fig:concat}
    \end{figure}
Now, we consider the case $\alpha\leq \Delta^3$, and we refer the reader to Figure \ref{fig:concat} for a depiction of the argument for this case. Consider the line $\ell_\alpha= \{z:\phi(z)=-n+\lceil\sqrt{\alpha} \Delta^{-3}n\rceil\}$. Let $z_\alpha$ be such that $z_\alpha\in \ell_\alpha\cap \LL_{u_j}^{v_k}$. %
 Let $\tilde z_\alpha$ be the unique point such that $\tilde z_\alpha\in \Gamma_{u_j}^{v_k}\cap \ell_\alpha$. Consider the event $E_\alpha$ defined by
    \begin{equation}
      \label{eq:421}
      E_\alpha=\{|\tilde z_\alpha-z_\alpha|\leq \sqrt{\alpha} \Delta^{-2}n^{2/3}/16\}\cap \{\Gamma_{u_j}^{v_k}\cap \{z: \phi(z)\geq -n+ \sqrt{\alpha} \Delta^{-3}n\}\subseteq L_0\}.
    \end{equation}
    Then by an application of Proposition \ref{prop:40} and Lemma \ref{lem:113}, we have
    \begin{equation}
      \label{eq:419}
      \PP(E_\alpha)\geq 1- Ce^{-c\sqrt{\alpha}}.
    \end{equation}
    By a simple argument involving a concatenation of geodesics, on the event $E_\alpha$, we have
    \begin{equation}
      \label{eq:514}
      T_{u_j}^{v_k}\lvert_{L_0}\geq T_{u_j}^{v_k}- T_{u_j}^{\tilde z_\alpha}+T_{u_j}^{\tilde z_\alpha}\lvert_{L_0},
    \end{equation}
    and as a result, we have $T_{u_j}^{v_k}- T_{u_j}^{v_k}\lvert_{L_0}\leq T_{u_j}^{\tilde z_\alpha}-T_{u_j}^{\tilde z_\alpha}\lvert_{L_0}$. Now, we divide the line segment $\{|z-z_\alpha|\leq \sqrt{\alpha}\Delta^{-2}n^{2/3}/16\}\cap \ell_\alpha\subseteq L_0$ into $O(\alpha^{1/6})$ many line segments $I_i$ of length $\alpha^{1/3}\Delta^{-2}n^{2/3}$ each; in the Figure \ref{fig:concat}, the $I_i$ are the orange line segments connecting the small black dots). By using the estimates from Proposition \ref{prop:42}, we obtain that
    \begin{equation}
      \label{eq:420}
      \PP(\sup_{i} \sup_{z\in I_i}|T_{u_j}^{z}-\EE T_{u_j}^z|\geq \alpha \Delta^{-1} n^{1/3}/2)\leq C\alpha^{1/6} e^{-c\sqrt{\alpha}}.
    \end{equation}
    In fact, by the restricted estimates from Proposition \ref{prop:42}, we also have
    \begin{equation}
      \label{eq:422}
      \PP(\sup_{i} \sup_{z\in I_i}|T_{u_j}^{z}\lvert_{L_0}-\EE T_{u_j}^z|\geq \alpha \Delta^{-1} n^{1/3}/2)\leq C\alpha^{1/6} e^{-c\sqrt{\alpha}}.
    \end{equation}
   We now write
    \begin{align}
      \label{eq:423}
      &\PP(T_{u_j}^{v_k}- T_{u_j}^{v_k}\lvert_{L_0}\geq \alpha \Delta^{-1/3}n^{1/3})\nonumber\\
      &\leq \PP(E_\alpha^c)+ \PP(T_{u_j}^{v_k}- T_{u_j}^{v_k}\lvert_{L_0}\geq \alpha \Delta^{-1/3}n^{1/3}; E_\alpha) \nonumber\\
      &\leq Ce^{-c\sqrt{\alpha}} + \PP(\sup_i\sup_{z\in I_i}  (T_{u_j}^z-T_{u_j}^z\lvert_{L_0})\geq \alpha \Delta^{-1} n^{1/3})\nonumber\\
      &\leq  Ce^{-c\sqrt{\alpha}}+ \PP(\sup_{i} \sup_{z\in I_i}|T_{u_j}^{z}-\EE T_{u_j}^z|\geq \alpha \Delta^{-1} n^{1/3}/2)+ \PP(\sup_{i} \sup_{z\in I_i}|T_{u_j}^{z}\lvert_{L_0}-\EE T_{u_j}^z|\geq \alpha \Delta^{-1} n^{1/3}/2)  \nonumber\\
      &\leq C'e^{-c'\sqrt{\alpha}},
    \end{align}
    where the last inequality is obtained by using \eqref{eq:420} and \eqref{eq:422}.
  \end{proof}
  Later, we shall use \eqref{eq:405} to expand out the covariance $\Cov(T_{u_j}^{v_k}, T_{u_j}^{v_{k+\Delta}})$. While doing so, there shall be a number of ``error'' terms that will come up, and we now prove a few lemmas that will be used to control these.

\begin{lemma}
  \label{lem:95}
  There exist constants $C,c$ such that for all $n$, all $1\leq \Delta\leq 2\varepsilon n^{1/3}$, all $|j|,|k|\leq 3\varepsilon n^{1/3}$, and all $0\leq i\leq \Delta^3/4$, we have
  \begin{align}
    \label{eq:406}
    &\EE(T_{u_j}^{v_k}\lvert_{L_{i+1}}- T_{u_j}^{v_k}\lvert_{L_{i}})^2\leq \EE(T_{u_j}^{v_k}- T_{u_j}^{v_k}\lvert_{L_{i}})^2\leq  Ce^{-ci}\Delta^{-2}n^{2/3},\nonumber\\
    &\EE(T_{u_j}^{v_{k+\Delta}}\lvert_{R_{-(i+1)}}- T_{u_j}^{v_{k+\Delta}}\lvert_{R_{-i}})^2\leq \EE(T_{u_j}^{v_{k+\Delta}}- T_{u_j}^{v_{k+\Delta}}\lvert_{R_{-i}})^2\leq Ce^{-ci}\Delta^{-2}n^{2/3}.
  \end{align}

  \end{lemma}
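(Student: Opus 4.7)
The plan is to handle the two displayed inequalities in each line separately, with the $R$-case reducing to the $L$-case by a symmetric argument. The first inequality in each line is purely deterministic: from the construction of the $L_i$ we have $L_0\subseteq L_i\subseteq L_{i+1}$ (and analogously $R_0\subseteq R_{-i}\subseteq R_{-(i+1)}$), so both $T_{u_j}^{v_k}\lvert_{L_{i+1}}-T_{u_j}^{v_k}\lvert_{L_i}$ and $T_{u_j}^{v_k}-T_{u_j}^{v_k}\lvert_{L_{i+1}}$ are nonnegative and sum to $T_{u_j}^{v_k}-T_{u_j}^{v_k}\lvert_{L_i}$, which gives the first quantity a pointwise upper bound by the latter.

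The main task is therefore the second inequality $\EE(T_{u_j}^{v_k}-T_{u_j}^{v_k}\lvert_{L_i})^2\leq Ce^{-ci}\Delta^{-2}n^{2/3}$. The crucial observation is a vanishing-on-the-good-event identity combined with a deterministic domination: on $\{\Gamma_{u_j}^{v_k}\subseteq L_i\}$ the geodesic itself is admissible for the restricted problem, so $T_{u_j}^{v_k}-T_{u_j}^{v_k}\lvert_{L_i}=0$, while in general the inclusion $L_0\subseteq L_i$ yields $T_{u_j}^{v_k}-T_{u_j}^{v_k}\lvert_{L_i}\leq T_{u_j}^{v_k}-T_{u_j}^{v_k}\lvert_{L_0}$. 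Applying Cauchy--Schwarz then gives
\begin{align*}
\EE(T_{u_j}^{v_k}-T_{u_j}^{v_k}\lvert_{L_i})^2
&\leq \EE\bigl[(T_{u_j}^{v_k}-T_{u_j}^{v_k}\lvert_{L_0})^2\ind(\Gamma_{u_j}^{v_k}\not\subseteq L_i)\bigr] \\
&\leq \bigl[\EE(T_{u_j}^{v_k}-T_{u_j}^{v_k}\lvert_{L_0})^4\bigr]^{1/2}\bigl[\PP(\Gamma_{u_j}^{v_k}\not\subseteq L_i)\bigr]^{1/2}.
\end{align*}

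The two remaining factors are controlled by results already available. The stretched-exponential tail from Lemma \ref{lem:97}, after integrating $\alpha\mapsto 4\alpha^{3}Ce^{-c\sqrt{\alpha}}$, produces a fourth-moment bound $\EE(T_{u_j}^{v_k}-T_{u_j}^{v_k}\lvert_{L_0})^4\leq C'\Delta^{-4}n^{4/3}$, while the localisation estimate \eqref{eq:509} in Lemma \ref{lem:113} supplies $\PP(\Gamma_{u_j}^{v_k}\not\subseteq L_i)\leq Ce^{-ci}$ in the range $0\leq i\leq \Delta^{3}/4$ assumed in the lemma. Multiplying yields $Ce^{-ci/2}\Delta^{-2}n^{2/3}$, which is the claim after relabeling the constant $c$.

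The second line of inequalities, concerning the $R$-sets and the endpoint $v_{k+\Delta}$, follows by running the same argument with left and right swapped: the containment $R_0\subseteq R_{-i}\subseteq R_{-(i+1)}$ holds by construction, the mesoscopic transversal fluctuation argument behind Lemma \ref{lem:113} carries over verbatim to give $\PP(\Gamma_{u_j}^{v_{k+\Delta}}\not\subseteq R_{-i})\leq Ce^{-ci}$, and the second tail bound in Lemma \ref{lem:97} plays the role of the first. I do not anticipate a new obstacle here; the only delicate structural step in either case is the vanishing-on-the-good-event observation, without which the relatively mild $L_0$-scale moment estimate could not upgrade to the exponential-in-$i$ bound that the lemma demands.
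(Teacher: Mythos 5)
Your proposal is correct and follows essentially the same route as the paper: both arguments reduce the first inequality to the monotonicity of restricted passage times in the domain, then bound the main term via Cauchy--Schwarz by combining the vanishing of $T_{u_j}^{v_k}-T_{u_j}^{v_k}\lvert_{L_i}$ on $\{\Gamma_{u_j}^{v_k}\subseteq L_i\}$ (probability controlled by Lemma \ref{lem:113}) with the fourth-moment bound on $T_{u_j}^{v_k}-T_{u_j}^{v_k}\lvert_{L_0}$ extracted from the stretched-exponential tail in Lemma \ref{lem:97}, and dispatch the $R$-case by symmetry. The only cosmetic difference is that you insert the indicator before applying Cauchy--Schwarz while the paper phrases the same step via $\PP(T_{u_j}^{v_k}-T_{u_j}^{v_k}\lvert_{L_i}>0)$.
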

  \begin{proof}
    We only prove the first equation since the latter can be obtained by a symmetry argument. Now, the first inequality here is immediate by the definition of restricted passage times and thus we need only show that $\EE(T_{u_j}^{v_k}- T_{u_j}^{v_k}\lvert_{L_{i}})^2\leq  Ce^{-ci}\Delta^{-2}n^{2/3}$. By Lemma \ref{lem:113}, for some constants $C,c$, we have
    \begin{equation}
      \label{eq:407}
      \PP(\Gamma_{u_j}^{v_k}\subseteq L_i)\geq 1-Ce^{-ci}.
    \end{equation}
    On the event $\{\Gamma_{u_j}^{v_k}\subseteq L_i\}$, we must have $T_{u_j}^{v_k}=T_{u_j}^{v_k}\lvert_{L_{i}}$. Also, we note that $T_{u_j}^{v_k}- T_{u_j}^{v_k}\lvert_{L_{i}}\geq 0$ a.s. by the definition of restricted passage times. As a result of the above, we have
    \begin{equation}
      \label{eq:408}
      \PP(T_{u_j}^{v_k}- T_{u_j}^{v_k}\lvert_{L_{i}}>0)\leq Ce^{-ci}.
    \end{equation}
    Now, by the Cauchy-Schwartz inequality, for some constants $C',c'$, we have
    \begin{align}
      \label{eq:409}
      \EE(T_{u_j}^{v_k}- T_{u_j}^{v_k}\lvert_{L_{i}})^2&\leq \PP(T_{u_j}^{v_k}\lvert_{L_{i+1}}- T_{u_j}^{v_k}\lvert_{L_{i}}>0)^{1/2} (\EE(T_{u_j}^{v_k}- T_{u_j}^{v_k}\lvert_{L_{i}})^4)^{1/2}\nonumber\\
                                                                       &\leq \sqrt{C}e^{-ci/2}(\EE(T_{u_j}^{v_k}- T_{u_j}^{v_k}\lvert_{L_{i}})^4)^{1/2}\nonumber\\
                                                                       &\leq \sqrt{C}e^{-ci/2}(\EE(T_{u_j}^{v_k}-T_{u_j}^{v_k}\lvert_{L_0})^4)^{1/2}\nonumber\\
      &\leq C'e^{-c' i} \Delta^{-2}n^{2/3}.
    \end{align}
    To obtain the third line above, we have used that $0\leq T_{u_j}^{v_k}- T_{u_j}^{v_k}\lvert_{L_{i}}\leq T_{u_j}^{v_k}-T_{u_j}^{v_k}\lvert_{L_0}$ holds a.s.\ and to obtain the last line, we have used Lemma \ref{lem:97}.

  \end{proof}

     \begin{lemma}
    \label{lem:98}
    There exist constants $C,c$ such that for all $n$, all $1\leq \Delta\leq 2\varepsilon n^{1/3}$, all $|j|,|k|\leq 2\varepsilon n^{1/3}$, and all $0\leq i\leq \Delta^{3}/4$, we have
    \begin{align}
      \label{eq:418}
      &|\Cov( T_{u_j}^{v_k}\lvert_{L_{i+1}}-T_{u_j}^{v_k}\lvert_{L_i}, T_{u_j}^{v_k+\Delta}\lvert_{R_0})|\leq Ce^{-ci}\Delta^{-2}n^{2/3},\nonumber\\
      &|\Cov(T_{u_j}^{v_k}\lvert_{L_0},T_{u_j}^{v_{k+\Delta}}\lvert_{R_{-(i+1)}}- T_{u_j}^{v_{k+\Delta}}\lvert_{R_{-i}})|\leq Ce^{-ci}\Delta^{-2}n^{2/3}.
    \end{align}
  \end{lemma}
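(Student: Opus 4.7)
The plan is to establish near-independence between the two factors via a resampling argument, then apply Cauchy--Schwarz combined with Lemma \ref{lem:95} and transversal fluctuation. I focus on the first inequality; the second follows by a symmetric argument in which the roles of $L$ and $R$ are interchanged and the strip is chosen to be $L_0 \cap R_{-(i+1)}$.

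Set $X := T_{u_j}^{v_k}\lvert_{L_{i+1}} - T_{u_j}^{v_k}\lvert_{L_i}$ and $Y := T_{u_j}^{v_{k+\Delta}}\lvert_{R_0}$. Since the additive contributions of $\omega_{u_j}$ and $\omega_{v_k}$ cancel in the difference, $X$ is measurable with respect to $\{\omega_z\}_{z \in L_{i+1}}$, and by Lemma \ref{lem:145}, $Y - \omega_{u_j}$ is measurable with respect to $\{\omega_z\}_{z \in R_0}$. These two weight collections overlap precisely in the strip $\cS := L_{i+1} \cap R_0$, bounded by $\LL_{u_j}^{v_{k+\Delta/2}}$ and $\LL_{p_{i+1,\Delta}}^{v_{k+\Delta/2}}$. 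Let $\{\tilde\omega_z\}_{z \in \cS}$ be an independent $\mathrm{Exp}(1)$ copy of the weights in $\cS$, and let $\tilde Y$ denote the random variable obtained from $Y$ by replacing $\omega_z$ with $\tilde\omega_z$ for $z \in \cS$. Then $\tilde Y$ is a function of random variables all independent of those determining $X$, so $\Cov(X, \tilde Y) = 0$; combined with $\tilde Y \stackrel{d}{=} Y$, this gives $\Cov(X, Y) = \EE[X(Y - \tilde Y)]$, whence Cauchy--Schwarz yields $|\Cov(X, Y)| \leq \sqrt{\EE X^2}\,\sqrt{\EE(Y - \tilde Y)^2}$.

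For the first factor, Lemma \ref{lem:95} gives $\EE X^2 \leq Ce^{-ci}\Delta^{-2}n^{2/3}$. For the sensitivity factor, observe that $\{Y \neq \tilde Y\}$ implies that the geodesic attaining $Y$ (with the original weights) or the one attaining $\tilde Y$ (with the resampled weights) intersects $\cS$. A direct geometric computation using $i \leq \Delta^3/4$ shows that $\cS$ lies at horizontal distance at least $c\Delta n^{2/3}$ from the line $\LL_{u_j}^{v_{k+\Delta}}$ at $\phi = 0$. Applying the transversal fluctuation estimate Proposition \ref{prop:39} to both environments (which are equal in distribution), together with the observation that the restricted and unrestricted geodesics coincide on the high-probability event $\{\Gamma_{u_j}^{v_{k+\Delta}} \subseteq R_0\}$, yields $\PP(Y \neq \tilde Y) \leq Ce^{-c\Delta^3}$. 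Combining this with the fourth-moment bound $\EE(Y - \tilde Y)^4 \leq Cn^{4/3}$ (consequence of the sub-Gaussian tails at KPZ scale $n^{1/3}$ from Proposition \ref{prop:41}) via another Cauchy--Schwarz restricted to the rare-event indicator, we obtain $\EE(Y - \tilde Y)^2 \leq Cn^{2/3}e^{-c\Delta^3/2}$.

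Putting everything together, $|\Cov(X, Y)| \leq Ce^{-ci/2}\Delta^{-1}e^{-c\Delta^3/4}n^{2/3}$. Since $\Delta e^{-c\Delta^3/4}$ is uniformly bounded for $\Delta \geq 1$, we have $\Delta^{-1}e^{-c\Delta^3/4} \leq C'\Delta^{-2}$, yielding the claim after adjusting constants. The main technical obstacle is the sensitivity estimate $\EE(Y - \tilde Y)^2 \leq Cn^{2/3}e^{-c\Delta^3/2}$: it requires a careful geometric identification of the distance between $\cS$ and the typical geodesic location, combined with transversal fluctuation applied to both the original and resampled environments, along with fourth-moment control on the passage-time fluctuations at the KPZ scale.
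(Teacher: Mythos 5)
Your overall skeleton --- decorrelate $T_{u_j}^{v_{k+\Delta}}\lvert_{R_0}$ from $X=T_{u_j}^{v_k}\lvert_{L_{i+1}}-T_{u_j}^{v_k}\lvert_{L_i}$ and then apply Cauchy--Schwarz together with Lemma \ref{lem:95} --- is the same as the paper's. The paper decorrelates by subtracting the comparator $T_{p_{(i+1),\Delta}}^{v_{k+\Delta}}\lvert_{R_{i+1}}$, which is exactly independent of $X$, and then bounds the variance of the difference by the triangle inequality through the unrestricted passage times, using Lemma \ref{lem:97} twice and the local diffusivity estimate Proposition \ref{prop:37} once. You instead resample the overlap region $\cS=L_{i+1}\cap R_0$. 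That is a legitimate alternative in principle, but your bound on the resulting error term is wrong.

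The gap is the sensitivity estimate $\PP(Y\neq\tilde Y)\leq Ce^{-c\Delta^3}$. The set $\cS$ is the wedge between $\LL_{u_j}^{v_{k+\Delta/2}}$ and $\LL_{p_{i+1,\Delta}}^{v_{k+\Delta/2}}$; these two lines meet at $v_{k+\Delta/2}$ and are separated by $(i+1)\Delta^{-2}n^{2/3}$ at $\phi=-n$, so $\cS$ is \emph{adjacent to the starting point} $u_j$ and has width of order $(i+1)\Delta^{-2}n^{2/3}$ there. Your geometric computation that $\cS$ is at distance $c\Delta n^{2/3}$ from $\LL_{u_j}^{v_{k+\Delta}}$ is only valid at $\phi=0$; near $\phi=-n+s$ with $s\lesssim (i+1)\Delta^{-3}n$ the line $\LL_{u_j}^{v_{k+\Delta}}$ itself lies \emph{inside} $\cS$ (its distance $\tfrac{s\Delta}{4n}n^{2/3}$ from the left edge of the wedge is smaller than the wedge's width at that height). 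Consequently the geodesic attaining $Y$ enters $\cS$ essentially immediately after leaving $u_j$ and typically remains there for an initial stretch of length $\asymp (i+1)\Delta^{-3}n$, so $\PP(Y\neq\tilde Y)$ is of constant order, not exponentially small, and $\EE(Y-\tilde Y)^2\leq Cn^{2/3}e^{-c\Delta^3/2}$ is false. (Relatedly, $\{\Gamma_{u_j}^{v_{k+\Delta}}\subseteq R_0\}$ is not a high-probability event: both bounding lines emanate from $u_j$, and Lemma \ref{lem:113} only places the geodesic in $R_0$ after an initial stretch of length $\alpha\Delta^{-3}n$.)

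The correct order of $\EE(Y-\tilde Y)^2$ is $\asymp (i+1)^{2/3}\Delta^{-2}n^{2/3}$, the squared weight fluctuation over that initial stretch, and this would still close your argument since $(i+1)^{1/3}e^{-ci/2}\leq C'e^{-c'i}$. But to prove it you must compare both $Y$ and $\tilde Y$ to a common quantity blind to $\cS$ --- for instance $T_{p_{(i+1),\Delta}}^{v_{k+\Delta}}\lvert_{R_{i+1}}$ --- and control the two comparison errors via Lemma \ref{lem:97} and Proposition \ref{prop:37}. At that point you have reproduced the paper's proof, so the resampling detour buys nothing; as written, the step is a genuine gap.
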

  \begin{proof}
By a symmetry argument, it suffices to prove the first inequality. Recall the definition $p_{i,\Delta}=u_{j+i\Delta^{-2}}$. Now, since $R_{i+1}$ and $L_{i+1}$ are disjoint, the vertex weights $\{\omega_{z}\}_{z\in R_{i+1}}\cup\{\omega_{p_{(i+1),\Delta}}\}$ are independent of $\{\omega_{z}\}_{z\in L_{i+1}}$. As a result of this, we have
    \begin{equation}
      \label{eq:424}
      \Cov( T_{u_j}^{v_k}\lvert_{L_{i+1}}-T_{u_j}^{v_k}\lvert_{L_i}, T_{u_j}^{v_k+\Delta}\lvert_{R_0})= \Cov( T_{u_j}^{v_k}\lvert_{L_{i+1}}-T_{u_j}^{v_k}\lvert_{L_i}, T_{u_j}^{v_k+\Delta}\lvert_{R_0}- T_{p_{(i+1),\Delta}}^{v_{k+\Delta}}\lvert_{R_{i+1}}).
    \end{equation}
  Now, note that for some constant $C$, we have the following inequalities:
    \begin{align}
      \label{eq:426}
      &\Var(T_{u_j}^{v_{k+\Delta}}- T_{u_j}^{v_{k+\Delta}}\lvert_{R_0})\leq C\Delta^{-2} n^{2/3},\nonumber\\
      &\Var(T_{p_{(i+1),\Delta}}^{v_{k+\Delta}}- T_{p_{(i+1),\Delta}}^{v_{k+\Delta}}\lvert_{R_{i+1}})\leq C\Delta^{-2} n^{2/3},\nonumber\\
      &\Var(T_{u_j}^{v_{k+\Delta}} - T_{p_{(i+1),\Delta}}^{v_{k+\Delta}})\leq \EE[(T_{u_j}^{v_{k+\Delta}} - T_{p_{(i+1),\Delta}}^{v_{k+\Delta}})^2]\leq C(i+1)\Delta^{-2}n^{2/3}.
    \end{align}
    The first two inequalities above follow by an application of Lemma \ref{lem:97} while the last inequality is an application of Proposition \ref{prop:37}. By using the above inequalities along with the triangle inequality, we immediately obtain
    \begin{equation}
      \label{eq:427}
      \Var(T_{u_j}^{v_k+\Delta}\lvert_{R_0}- T_{p_{(i+1),\Delta}}^{v_{k+\Delta}}\lvert_{R_{i+1}})\leq C(i+3)\Delta^{-2}n^{2/3}.
    \end{equation}
    Finally, by using \eqref{eq:424} along with the Cauchy-Schwartz inequality, we obtain
    \begin{align}
      \label{eq:428}
      |\Cov( T_{u_j}^{v_k}\lvert_{L_{i+1}}-T_{u_j}^{v_k}\lvert_{L_i}, T_{u_j}^{v_k+\Delta}\lvert_{R_0})|&\leq \sqrt{\Var( T_{u_j}^{v_k}\lvert_{L_{i+1}}-T_{u_j}^{v_k}\lvert_{L_i})}\sqrt{ \Var(T_{u_j}^{v_k+\Delta}\lvert_{R_0}- T_{p_{(i+1),\Delta}}^{v_{k+\Delta}}\lvert_{R_{i+1}})}\nonumber\\
                                                                                                        &\leq (C_1e^{-c_1 i}\Delta^{-1}n^{1/3})(\sqrt{C(i+3)}\Delta^{-1}n^{1/3})\nonumber\\
      &\leq \sqrt{C}C_1\sqrt{i+3}e^{-c_1i}\Delta^{-2}n^{2/3}.
    \end{align}
     Here, in the second line, we have used Lemma \ref{lem:95} along with \eqref{eq:427}.
   \end{proof}

We are now finally ready to complete the proof of Proposition \ref{prop:36}.
\begin{proof}[Proof of Proposition \ref{prop:36}]
  By a symmetry argument, it suffices to work with $\Delta>0$. Recall the expansions for $T_{u_j}^{v_k}$ and $T_{u_j}^{v_{k+\Delta}}$ from \eqref{eq:405}. First, we write
    \begin{align}
      \label{eq:429}
      &\Cov(T_{u_j}^{v_k},T_{u_j}^{v_{k+\Delta}})\nonumber\\
      &= \Cov(T_{u_j}^{v_k},T_{u_j}^{v_{k+\Delta}}\lvert_{R_0})+\sum_{i'=0}^{\Delta-1}\Cov(T_{u_j}^{v_k},T_{u_j}^{v_{k+\Delta}}\lvert_{R_{-(i'+1)}}- T_{u_j}^{v_{k+\Delta}}\lvert_{R_{-i'}}) + \Cov(T_{u_j}^{v_k},T_{u_j}^{v_{k+\Delta}}- T_{u_j}^{v_{k+\Delta}}\lvert_{R_{-\Delta}}).
    \end{align}
    By using Lemma \ref{lem:95} along with the Cauchy-Schwartz inequality, we immediately obtain
    \begin{align}
      \label{eq:430}
      |\Cov(T_{u_j}^{v_k},T_{u_j}^{v_{k+\Delta}}- T_{u_j}^{v_{k+\Delta}}\lvert_{R_{-\Delta}})|&\leq \sqrt{\Var(T_{u_j}^{v_k})}\sqrt{\Var(T_{u_j}^{v_{k+\Delta}}- T_{u_j}^{v_{k+\Delta}}\lvert_{R_{-\Delta}})}\nonumber\\
      &\leq Ce^{-c\Delta} \Delta^{-2}n^{2/3},
    \end{align}
    and this bounds the last term in \eqref{eq:429}. To bound the first term therein, we write
    \begin{align}
      \label{eq:431}
      &|\Cov(T_{u_j}^{v_k},T_{u_j}^{v_{k+\Delta}}\lvert_{R_0})|\nonumber\\
      &\leq |\Cov(T_{u_j}^{v_k}\lvert_{L_0},T_{u_j}^{v_{k+\Delta}}\lvert_{R_0})|+\sum_{i=0}^{\Delta-1} |\Cov(T_{u_j}^{v_k}\lvert_{L_{i+1}}- T_{u_j}^{v_k}\lvert_{L_{i}},T_{u_j}^{v_{k+\Delta}}\lvert_{R_0})|+ |\Cov(T_{u_j}^{v_k}-T_{u_j}^{v_k}\lvert_{L_\Delta}, T_{u_j}^{v_{k+\Delta}}\lvert_{R_0})|\nonumber\\
      &\leq \Var(\omega_{u_j}) + \sum_{i=0}^{\Delta-1}C'e^{-c'i}\Delta^{-2}n^{2/3} + Ce^{-c\Delta}\Delta^{-2}n^{2/3}\leq C_1\Delta^{-2}n^{2/3},
    \end{align}
    where to obtain the first term in the third line above, we have used Lemma \ref{lem:145}, and to obtain the next two terms, we have used the Cauchy-Schwartz inequality along with Lemma \ref{lem:98}-- we have also used that $\Var(T_{u_j}^{v_{k+\Delta}}\lvert_{R_0})=O(n^{2/3})$ and this follows from the restricted estimates in Proposition \ref{prop:42}.
It remains to control the sum appearing in \eqref{eq:429}. Locally, defining $A_i=T_{u_j}^{v_{k+\Delta}}\lvert_{R_{-(i'+1)}}- T_{u_j}^{v_{k+\Delta}}\lvert_{R_{-i'}}$, we have for $i'\in [\![0,\Delta-1]\!]$,
    \begin{align}
      \label{eq:432}
      |\Cov(T_{u_j}^{v_k},A_{i'})|&\leq |\Cov(T_{u_j}^{v_k}\lvert_{L_0},A_{i'})|+\sum_{i=0}^{\Delta-1} |\Cov(T_{u_j}^{v_k}\lvert_{L_{i+1}}- T_{u_j}^{v_k}\lvert_{L_{i}},A_{i'})|+ |\Cov(T_{u_j}^{v_k}-T_{u_j}^{v_k}\lvert_{L_\Delta}, A_{i'})|\nonumber\\
      &\leq C_1e^{-c_1i'}\Delta^{-2}n^{2/3} + \sum_{i=0}^{\Delta-1}C_2e^{-c_2(i+i')}\Delta^{-2}n^{2/3}+ C_3e^{-c_3(\Delta+ i')}\Delta^{-2} n^{2/3}\nonumber\\
      &\leq C_4e^{-c_4 i'}\Delta^{-2}n^{2/3},
    \end{align}
    where the first term in the second line is obtained by using Lemma \ref{lem:98} while the other terms in the second line are obtained by using the Cauchy-Schwartz inequality along with Lemma \ref{lem:95}.
    Finally, by using \eqref{eq:429} and combining \eqref{eq:430}, \eqref{eq:431}, \eqref{eq:432}, we obtain
    \begin{align}
      \label{eq:433}
      |\Cov(T_{u_j}^{v_k},T_{u_j}^{v_{k+\Delta}})|\leq C_1\Delta^{-2}n^{2/3}+ \sum_{i'=0}^{\Delta-1} C_2e^{-c_2i'}\Delta^{-2}n^{2/3} + C_3e^{-c_3 \Delta} \Delta^{-2}n^{2/3}\leq C_4\Delta^{-2}n^{2/3},
    \end{align}
    and this completes the proof.
  \end{proof}

\subsection{Covariance estimate 2: The proof of Proposition \ref{prop:34}}
\label{sec:covar-estim-2}
The goal of this section is to prove Proposition \ref{prop:34}. In contrast to the proof of Proposition \ref{prop:35}, the above can be done by an easy transversal fluctuation argument. First, by a straightforward symmetry argument involving replacing the vertex weights $\{\omega_z\}_{z\in \ZZ^2}$ by $\{\omega_{-z}\}_{z\in \ZZ^2}$, we note that in order to prove Proposition \ref{prop:34}, it suffices to prove the following result.
\begin{lemma}
  \label{lem:136}
  There exist constants $C,c_1,c_2$ such that for all $n$ and all $j,k,i,\Delta\in [\![-2\varepsilon n^{1/3},2\varepsilon n^{1/3}]\!]$ additionally satisfying $\Delta\geq 0$ and $i\geq 1$,
  \begin{equation}
    \label{eq:654}
        |\Cov(T_{u_{j}}^{v_k}, T_{u_{j+i}}^{v_{k+\Delta+i}})|\leq Cn^{2/3} \min(e^{-c_1\Delta^2},e^{-c_2i^3}).
  \end{equation}
\end{lemma}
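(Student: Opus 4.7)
First, observe the geometry: for $\Delta\geq 0$ and $i\geq 1$ the two lines $\LL_{u_j}^{v_k}$ and $\LL_{u_{j+i}}^{v_{k+\Delta+i}}$ do not intersect in the strip $\{z:|\phi(z)|\leq n\}$; their transverse separation at anti-diagonal $\phi=r$ equals $(i+\Delta(r+n)/(2n))n^{2/3}$, which is at least $in^{2/3}$ uniformly in $r\in[-n,n]$ and grows to $(i+\Delta)n^{2/3}$ at $r=n$. It is this uniform non-intersection that will allow the two passage times to be decoupled.

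The overall strategy will mirror the parallelogram-restriction and independence argument used in Section \ref{sec:covar-estim-1} to handle the error terms in the proof of Proposition \ref{prop:36}. The plan is to enclose the two geodesics in disjoint parallelograms $U_1,U_2$ centered on the respective lines, write $T_{u_j}^{v_k}=T_{u_j}^{v_k}\lvert_{U_1}+E_1$ and analogously for the second pair, and bilinearly expand $\Cov(T_{u_j}^{v_k},T_{u_{j+i}}^{v_{k+\Delta+i}})$. Provided $U_1\cap U_2=\emptyset$ and each pair of endpoints lies only in its own parallelogram, the main term $\Cov(T\lvert_{U_1},T\lvert_{U_2})$ vanishes because the restricted passage times are measurable with respect to disjoint families of vertex weights. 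The three residual cross-terms are bounded by the Cauchy--Schwarz inequality using Proposition \ref{prop:42}, which supplies $\Var(T\lvert_{U_r})=O(n^{2/3})$ and, through the fourth-moment bound it implies, $\Var(E_r)\leq C n^{2/3}\,\PP(\Gamma_r\not\subseteq U_r)^{1/2}$. This reduces the lemma to producing good transversal-fluctuation tails for $\PP(\Gamma_r\not\subseteq U_r)$ for judicious choices of parallelograms.

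For the $e^{-c_2 i^3}$ factor, I would choose each $U_r$ with transverse half-width $in^{2/3}/4$ about $\LL_r$. Disjointness is immediate from the minimum separation $in^{2/3}$ of the two lines, and the mesoscopic transversal-fluctuation estimate in Proposition \ref{prop:40} then gives $\PP(\Gamma_r\not\subseteq U_r)\leq Ce^{-ci^3}$; feeding this into the Cauchy--Schwarz decomposition yields $|\Cov|\leq Cn^{2/3}e^{-c_2i^3}$. Note that this already supplies the claimed minimum whenever $i^3\geq \Delta^2$.

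The complementary regime $i^3<\Delta^2$ requires the $e^{-c_1\Delta^2}$ factor and is the genuinely new part of the argument. The plan here is to exploit that the typical characteristic direction of $\Gamma_{u_{j+i}}^{v_{k+\Delta+i}}$ differs from that of $\Gamma_{u_j}^{v_k}$ by an angle of order $\Delta n^{-1/3}$, which is macroscopic once $\Delta\gg 1$. Concretely, I anticipate appealing to the Gaussian large-deviation tail $e^{-c\alpha^2 n^{-1/3}}$ from Proposition \ref{prop:47}, applied at scale $\alpha\sim \Delta n^{2/3}$ so that the exponent becomes $e^{-c\Delta^2}$; this amounts to enclosing the second geodesic in a parallelogram associated to the ``wrong'' characteristic direction and controlling the corresponding restricted passage time within the same Cauchy--Schwarz framework as above. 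I expect this $\Delta^2$ step to be the main obstacle: producing exactly the quadratic (rather than cubic) exponent demands carefully matching the scale of the parallelogram to the Gaussian large-deviation regime of Proposition \ref{prop:47}, as opposed to the cubic tail of Proposition \ref{prop:40} that governs typical-direction transversal fluctuations.
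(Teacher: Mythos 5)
Your overall framework (disjoint regions, restricted passage times whose covariance vanishes by independence, Cauchy--Schwarz on the error terms with variance bounds from Proposition \ref{prop:42} and a fourth-moment/indicator trick) is exactly the paper's, and your treatment of the $e^{-c_2 i^3}$ regime is correct: since the two lines are separated by at least $i n^{2/3}$ uniformly in the strip, a macroscopic transversal fluctuation estimate (the paper uses Proposition \ref{prop:39} applied to a single separating line $\LL_{u_{j+i/2}}^{v_{k+(\Delta+i)/2}}$, which is cleaner than two parallelograms but equivalent) gives the cubic exponent.

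The $e^{-c_1\Delta^2}$ regime is where your proposal breaks down, and the gap is both in the mechanism and in the geometry. First, Proposition \ref{prop:47} is a Brownian LPP estimate while Lemma \ref{lem:136} lives in exponential LPP; more importantly, to extract $e^{-c\Delta^2}$ from the tail $e^{-c\alpha^2 n^{-1/3}}$ you would need $\alpha = \Delta n^{1/6}$, which for any $\Delta \leq 2\varepsilon n^{1/3}$ is far below the threshold $\alpha \geq 5 n^{2/3}$ where that Gaussian regime applies -- you would actually be in the $e^{-c\alpha^{3/2}}$ regime, and no choice of scale produces $\Delta^2$ this way. Second, and more fundamentally, the exponent $\Delta^2$ does not come from a passage-time deviation at all but from a transversal-fluctuation bottleneck at the mesoscopic scale $r \sim n/\Delta$: when $i = O(1)$ and $\Delta$ is large, the two geodesics start only $i n^{2/3}$ apart at the bottom, so any separating region must \emph{widen linearly} as one moves up (your parallelograms, being parallel to the respective lines, stay at fixed width and can only ever yield $e^{-c i^3}$). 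The paper's Lemma \ref{lem:113} separates the geodesics by the tilted line $\LL_{u_{j+1/4}}^{v_{k+\Delta/2}}$, whose distance from $\LL_{u_j}^{v_k}$ at anti-diagonal distance $r$ from the bottom is of order $n^{2/3} + r\Delta n^{-1/3}$; applying the mesoscopic cubic-tail estimate (Proposition \ref{prop:40}) and taking a union bound over $r$, the dominant contribution is at $r \sim n/\Delta$, where the required fluctuation is $\Theta(\Delta^{2/3}) \cdot r^{2/3}$, and cubing $\Delta^{2/3}$ yields $e^{-c\Delta^2}$. You would need to replace your fixed-width parallelogram for the second regime by such a linearly opening separator and redo the union bound over mesoscopic scales to close the argument.
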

The aim now is to prove the above lemma. We shall require the following lemma controlling the transversal fluctuations of the associated geodesics.
\begin{lemma}
  \label{lem:99}
  For $j,k,i,\Delta\in [\![-2\varepsilon n^{1/3},2\varepsilon n^{1/3}]\!]$ additionally satisfying $\Delta\geq 0$ and $i\geq 1$, consider the event $E_{i,\Delta}$ defined by
  \begin{equation}
    \label{eq:442}
    E_{i,\Delta}=\{\Gamma_{u_j}^{v_k}\subseteq L_{u_{j+i/2}}^{v_{k+(\Delta+i)/2}}, \Gamma_{u_{j+i}}^{v_{k+\Delta+i}}\subseteq R_{u_{j+i/2}}^{v_{k+(\Delta+i)/2}}\}.
  \end{equation}
  Then for some positive constants $C,c_1,c_2$, and all $n$, we have
   \begin{equation}
    \label{eq:440}
    \PP(E_{i,\Delta})\geq 1-C\min(e^{-c_1\Delta^2},e^{-c_2i^3}).
  \end{equation}
\end{lemma}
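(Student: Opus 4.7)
The event $E_{i,\Delta}$ asks that two geodesics lie on opposite sides of the line $\LL^*:=\LL_{u_{j+i/2}}^{v_{k+(\Delta+i)/2}}$. By a straightforward reflection symmetry of BLPP (applied about the vertical axis after recentering), the two inclusion events $\{\Gamma_{u_j}^{v_k}\subseteq L_{u_{j+i/2}}^{v_{k+(\Delta+i)/2}}\}$ and $\{\Gamma_{u_{j+i}}^{v_{k+\Delta+i}}\subseteq R_{u_{j+i/2}}^{v_{k+(\Delta+i)/2}}\}$ have the same failure probability, so I will only bound the former. The plan is to view the failure as a transversal fluctuation event and apply the mesoscopic and global estimates of Section~\ref{sec:transv-fluct-estim}.

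A direct computation with the definitions of $u_{\cdot},v_{\cdot}$ shows that at $\phi$-coordinate $-n+r$, $r\in[0,2n]$, the horizontal separation between $\LL_{u_j}^{v_k}$ and $\LL^*$ is
\[
\delta(r)\;=\;\frac{i}{2}n^{2/3}\;+\;\frac{r\Delta}{4n}\,n^{2/3},
\]
so $\delta$ is positive and non-decreasing, with $\delta(0)=(i/2)n^{2/3}$ and $\delta(2n)=\tfrac12(i+\Delta)n^{2/3}$. The required inclusion therefore holds as soon as $|\Gamma_{u_j}^{v_k}(-n+r)-\psi_{u_j\to v_k}(-n+r)|<\delta(r)$ for every $r$.

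For the $e^{-c_2i^3}$ bound I will use that $\delta(r)\geq (i/2)n^{2/3}$ for all $r$ and apply the global transversal fluctuation estimate (Proposition~\ref{prop:38}), appropriately translated and rescaled from $\0,\n$ to $u_j,v_k$; this yields the failure probability $\leq Ce^{-c(i/4)^3}$.

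For the $e^{-c_1\Delta^2}$ bound I will analyze $g(r):=\delta(r)/r^{2/3}=(i/2)(n/r)^{2/3}+(\Delta/4)(r/n)^{1/3}$. Elementary calculus gives the minimizer $r^\ast=4in/\Delta$; when $r^\ast\leq n$ (i.e.\ $\Delta\geq 4i$) the minimum value is $\asymp(i\Delta^2)^{1/3}\geq\Delta^{2/3}$ (using $i\geq 1$). Otherwise $\Delta<4i$ and the $e^{-c_2i^3}$ estimate already implies $e^{-c_1\Delta^2}$ after rescaling constants, so I may assume $\Delta\geq 4i$. A symmetric computation from the $v_k$ endpoint handles $r\in[n,2n]$. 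Applying the mesoscopic transversal fluctuation estimate (Proposition~\ref{prop:53}) on each of $O(\log n)$ dyadic scales $I_k=[2^{k-1},2^k]$ with $\alpha=g(2^{k-1})\gtrsim\Delta^{2/3}$, the failure probability on each scale is at most $Ce^{-c'\Delta^2}$, and summing over scales produces the claimed bound (with the logarithmic union-bound factor absorbed into the constant for $\Delta$ beyond a fixed threshold; for $\Delta$ below that threshold the bound $Ce^{-c_1\Delta^2}$ is $\Omega(1)$ and hence trivial).

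The step that demands the most care is the absorption of the $\log n$ union-bound factor in the $e^{-c_1\Delta^2}$ analysis together with the two-sided handling of the fluctuation: one must ensure that the dyadic estimate from the $u_j$ side covers $r\in[0,n]$ while its reflected counterpart covers $r\in[n,2n]$, and that the offset $\delta(r)$ has been correctly computed so that $g(r)^3\gtrsim\Delta^2$ on both halves. Apart from this bookkeeping, everything is a routine consequence of the transversal fluctuation machinery already cited.
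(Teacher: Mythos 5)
Your strategy is sound and is essentially the one the paper uses: the $e^{-c_2 i^3}$ bound comes from the uniform separation $\gtrsim i n^{2/3}$ between $\LL_{u_j}^{v_k}$ and the separating line combined with the global transversal fluctuation estimate, and the $e^{-c_1\Delta^2}$ bound comes from minimizing the ratio of the separation $\delta(r)$ to $r^{2/3}$ over the $\phi$-coordinate (your computation of $\delta(r)$ and of the minimizer $r^*=4in/\Delta$, with minimum value $\asymp(i\Delta^2)^{1/3}$, is correct, as is the reduction of the case $\Delta<4i$ to the $i^3$ bound). Two remarks on routing: this lemma lives in the exponential LPP setting, so the relevant inputs are Propositions \ref{prop:39} and \ref{prop:40} rather than their BLPP analogues \ref{prop:38} and \ref{prop:53}; and the paper does not redo the optimization here --- it notes that since $i\geq1$ the region $L_{u_{j+i/2}}^{v_{k+(\Delta+i)/2}}$ contains $L_{u_{j+1/4}}^{v_{k+\Delta/2}}=L_{\Delta^2/4}$ and then quotes the already-established Lemma \ref{lem:113} with index $\Delta^2/4$, whose proof is exactly the computation you carry out inline.

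The one genuine flaw is how you dispose of the union-bound factor in the $\Delta^2$ part. Bounding the failure probability on each of the $O(\log n)$ dyadic scales by the worst case $Ce^{-c\,(\min_r g(r))^3}\leq Ce^{-c'\Delta^2}$ yields $C(\log n)e^{-c'\Delta^2}$, and the logarithm cannot be ``absorbed into the constant for $\Delta$ beyond a fixed threshold'': for any fixed $\Delta$ one may take $n$ arbitrarily large, so this is not a bound of the form $Ce^{-c_1\Delta^2}$ with $C$ independent of $n$. The repair is to retain the scale-dependent exponent: $g(r)^3\gtrsim i^3n^2r^{-2}+\Delta^3n^{-1}r$ grows geometrically in the dyadic index on either side of the minimizing scale $r^*\asymp in/\Delta$, so $\sum_k e^{-cg(2^k)^3}$ is dominated by a geometric series and is at most $Ce^{-ci\Delta^2}\leq Ce^{-c\Delta^2}$ uniformly in $n$. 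This is precisely the ``exponential series'' summation performed in the proof of Lemma \ref{lem:113} (there as a union bound over integer values of $r$ rather than dyadic blocks). With that correction your argument goes through.
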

\begin{proof}
   First, we show that for some constants $C_2,c_2$, we have
   \begin{equation}
     \label{eq:447}
     \PP(E_{i,\Delta})\geq 1-C_2e^{-c_2i^3}.
   \end{equation}
   To do so, we first note that for a constant $c'>0$, using $d(\cdot,\cdot)$ to denote the Euclidean distance, we have
  \begin{equation}
    \label{eq:441}
    d(\LL_{u_j}^{v_k}, \LL_{u_{j+i/2}}^{v_{k+(\Delta+i)/2}}), d(\LL_{u_{j+i}}^{v_{k+(\Delta+i)}}, \LL_{u_{j+i/2}}^{v_{k+(\Delta+i)/2}})\geq c'in^{2/3},
  \end{equation}
  and then we simply apply Proposition \ref{prop:39}. It now remains to show that for some constants $C_1,c_1$, we have $\PP(E_{i,\Delta})\geq 1-C_1e^{-c_1 \Delta^2}$.

  To do so, we first note that by the ordering of geodesics, and since $i\geq 1$, we have the inequalities
  \begin{equation}
    \label{eq:517}
    \PP( \Gamma_{u_j}^{v_k}\subseteq L_{u_{j+i/2}}^{v_{k+(\Delta+i)/2}})\geq \PP(\Gamma_{u_j}^{v_k}\subseteq L_{u_{j+1/4}}^{v_{k+\Delta/2}})= \PP(\Gamma_{u_j}^{v_k}\subseteq L_{\Delta^2/4}),
  \end{equation}
  where we are using the notation from \eqref{eq:402} for the last equality. Thus, by using Lemma \ref{lem:113}, we obtain
  \begin{equation}
    \label{eq:518}
    \PP( \Gamma_{u_j}^{v_k}\subseteq L_{u_{j+i/2}}^{v_{k+(\Delta+i)/2}})\geq 1-C_3e^{-c_3\Delta^2}.
  \end{equation}
  By a symmetry argument, we also obtain that
  \begin{equation}
    \label{eq:519}
    \PP(\Gamma_{u_{j+i}}^{v_{k+\Delta+i}}\subseteq R_{u_{j+i/2}}^{v_{k+(\Delta+i)/2}})\geq 1-C_3e^{-c_3\Delta^2}.
  \end{equation}
  As a result of this, we immediately get $\PP(E_{i,\Delta})\geq 1-2C_3e^{-c_3\Delta^2}$, and this when combined with \eqref{eq:447} completes the proof.
\end{proof}
We now use the above to prove Lemma \ref{lem:136}.

\begin{proof}[Proof of Lemma \ref{lem:136}]
  Just for this proof, we use the shorthand $L=L_{u_{j+i/2}}^{v_{k+(\Delta+i)/2}},R=R_{u_{j+i/2}}^{v_{k+(\Delta+i)/2}}$. Recall the event $E_{i,\Delta}$ from Lemma \ref{lem:99}.%

  Note that on the event $E_{i,\Delta}$, we have
  \begin{equation}
    \label{eq:435}
    T_{u_j}^{v_k}=T_{u_j}^{v_k}\lvert_L, T_{u_{j+i}}^{v_{k+i+\Delta}}=T_{u_{j+i}}^{v_{k+i+\Delta}}\lvert_R.
  \end{equation}
  As a result, if we define
  \begin{equation}
    \label{eq:437}
    A=T_{u_j}^{v_k}-T_{u_j}^{v_k}\lvert_L, B= T_{u_{j+i}}^{v_{k+i+\Delta}}-T_{u_{j+i}}^{v_{k+i+\Delta}}\lvert_R,
  \end{equation}
  then by a simple application of the Cauchy-Schwartz inequality and Lemma \ref{lem:99}, we have
  \begin{equation}
    \label{eq:438}
    (\EE A^2)^{1/2},(\EE B^2)^{1/2}\leq C\min(e^{-c_1\Delta^2},e^{-c_2i^3})n^{1/3}.
  \end{equation}
Note that in the above, we have used that all of $\Var(T_{u_{j}}^{v_k}), \Var(T_{u_j}^{v_k}\lvert_L), \Var(T_{u_{j+i}}^{v_{k+i+\Delta}}), \Var(T_{u_{j+i}}^{v_{k+i+\Delta}}\lvert_R)$ are $O(n^{2/3})$, and this is a consequence of Proposition \ref{prop:42}. Thus, we now have
  \begin{align}
    \label{eq:436}
    &|\Cov(T_{u_{j}}^{v_k}, T_{u_{j+i}}^{v_{k+\Delta+i}})|\nonumber\\
    &\leq  |\Cov(T_{u_{j}}^{v_k}\lvert_L, T_{u_{j+i}}^{v_{k+i+\Delta}}\lvert_R)|+ |\Cov(T_{u_{j}}^{v_k}\lvert_L, B)| + |\Cov(A, T_{u_{j+i}}^{v_{k+i+\Delta}}\lvert_R)|+ |\Cov(A,B)|\nonumber\\
    &\leq C\min(e^{-c_1\Delta^2},e^{-c_2i^3})n^{2/3},
  \end{align}
  where we have used that the first term is zero due to independence, and the remaining three terms are controlled by the Cauchy-Schwartz inequality.
\end{proof}

\subsection{Completion of the proof of Theorem \ref{thm:4}}
The goal now is to combine the moment estimates Propositions \ref{prop:2}, \ref{prop:1} to complete the proof of Theorem \ref{thm:4}. Recall the set $\boxx_n$ and random variables $X_n$ defined in \eqref{eq:376}, \eqref{eq:1}. First, we note that by the second moment method, we immediately have the following result. 
\label{sec:obta-exist}
\begin{lemma}
  \label{lem:2.1}
  There exists a positive constant $C$ such that $\PP(X_n>0)\geq C(\log n)^{-1}$ for all $n$.
\end{lemma}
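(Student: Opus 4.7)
The plan is to obtain Lemma \ref{lem:2.1} as a direct one-line application of the Paley--Zygmund inequality (the standard second moment method), using the two moment estimates that have already been established. Since $X_n$ is a non-negative random variable supported on the event $\{X_n > 0\}$, the Cauchy--Schwarz inequality applied to $X_n = X_n \cdot \ind(X_n > 0)$ yields
\begin{equation*}
(\EE X_n)^2 = \bigl(\EE[X_n \, \ind(X_n > 0)]\bigr)^2 \leq \EE[X_n^2] \cdot \PP(X_n > 0),
\end{equation*}
so that whenever $\EE X_n^2 > 0$ we have the bound $\PP(X_n > 0) \geq (\EE X_n)^2 / \EE X_n^2$.

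Next, I would substitute in the two key moment estimates. By Proposition \ref{prop:2}, there is a constant $C_1 > 0$ such that $\EE X_n \geq C_1 n^{-1/3}$ for all $n$, and by Proposition \ref{prop:1} there is a constant $C_2 > 0$ such that $\EE X_n^2 \leq C_2 n^{-2/3} \log n$ for all $n$. Plugging these in gives
\begin{equation*}
\PP(X_n > 0) \geq \frac{(C_1 n^{-1/3})^2}{C_2 n^{-2/3} \log n} = \frac{C_1^2}{C_2} (\log n)^{-1},
\end{equation*}
which is precisely the desired inequality with $C = C_1^2/C_2$.

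There is essentially no obstacle to this step: all of the difficulty has already been absorbed into the proofs of Propositions \ref{prop:2} and \ref{prop:1}, where the genuinely hard work (the transversal fluctuation lower bound for the first moment, and the delicate covariance estimates of Propositions \ref{prop:35} and \ref{prop:34} that feed into the second moment bound via the dynamical Russo--Margulis formula) has been carried out. The only thing worth verifying en route is that the $(\log n)^{-1}$ factor in the denominator of the second moment bound is genuinely what limits us--indeed, as explained in Section \ref{sec:outrem} and in Question \ref{que:2}, removing this logarithmic factor (so as to upgrade the conclusion to $\PP(X_n > 0) \geq C$ and hence, combined with ergodicity, to positive-probability existence of exceptional times) would require a more refined weighted second moment computation, which is outside the scope of this lemma.
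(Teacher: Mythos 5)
Your proof is correct and is exactly the paper's argument: the standard second moment (Paley--Zygmund/Cauchy--Schwarz) bound $\PP(X_n>0)\geq (\EE X_n)^2/\EE X_n^2$, combined with Propositions \ref{prop:2} and \ref{prop:1}. No differences worth noting.
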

\begin{proof}
  By the second moment method, we know that $\PP(X_n>0)\geq \EE(X_n)^2/(\EE X_n)^2\geq C(\log n)^{-1}$, where we have used Proposition \ref{prop:2} and Proposition \ref{prop:1} to obtain the above inequality.
\end{proof}
By using Lemma \ref{lem:2.1} along with the pigeonhole principle, we immediately have the following.
\begin{lemma}
  \label{lem:10}
There is a positive constant $C$ such that for each $n\in \NN$, there is a deterministic point $\fq_n\in \boxx_{n}$ for which
  \begin{displaymath}
    \PP(\fq_n\in \Gamma_{u_j}^{v_k,t} \textrm{ for some } j,k\in [\![-\varepsilon n^{1/3}, \varepsilon n^{1/3}]\!] \textrm{ and for some } t\in (0,1) %
    )\geq C(\log n)^{-1}.
  \end{displaymath}
\end{lemma}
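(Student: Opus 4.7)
The plan is a direct application of the pigeonhole principle, using the crucial fact (built into the definition of $X_n$) that $\fp_n$ is independent of the dynamical LPP environment.

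First I would unpack what $\{X_n>0\}$ means. Since $X_n$ is a sum of non-negative integrals of indicators, it is strictly positive if and only if there exist $j, k \in [\![-\varepsilon n^{1/3}, \varepsilon n^{1/3}]\!]$ and a positive-Lebesgue-measure set of $t \in (0,1)$ with $\fp_n \in \Gamma_{u_j}^{v_k,t}$. In particular, $\{X_n>0\}$ is contained in the event
\[
\cE := \{\fp_n \in \Gamma_{u_j}^{v_k,t} \text{ for some } j,k \in [\![-\varepsilon n^{1/3},\varepsilon n^{1/3}]\!], t\in (0,1)\},
\]
so Lemma \ref{lem:2.1} immediately gives $\PP(\cE) \geq C(\log n)^{-1}$.

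Second, since $\fp_n$ is uniform on $\boxx_n$ and independent of the LPP, I would condition on $\fp_n$ to write
\[
\PP(\cE) = \frac{1}{|\boxx_n|}\sum_{q \in \boxx_n} g(q), \quad \text{where } g(q) := \PP\bigl(q \in \Gamma_{u_j}^{v_k,t} \text{ for some } j,k,t\bigr),
\]
where the ranges of $j,k,t$ are as in the definition of $\cE$. The average value of $g$ over $\boxx_n$ is therefore at least $C(\log n)^{-1}$, and the pigeonhole principle furnishes a deterministic point $\fq_n \in \boxx_n$ with $g(\fq_n) \geq C(\log n)^{-1}$, which is the conclusion.

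There is no real obstacle here: this is purely a bookkeeping step that converts the first/second moment bound on the averaged statistic $X_n$ (with random anchor $\fp_n$) into a statement about a single deterministic anchor. The genuinely non-trivial de-randomization — replacing $\fq_n$ by $\0$, as required for Theorem \ref{thm:4} itself — will be carried out afterwards using the translation invariance of the dynamics together with transversal fluctuation estimates to show that a linear fraction of such $\fq_n\in \boxx_n$ admit long through-geodesics that, after re-rooting, pass near $\0$.
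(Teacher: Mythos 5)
Your argument is correct and is exactly the paper's proof: the paper derives Lemma \ref{lem:10} in one line from Lemma \ref{lem:2.1} together with the pigeonhole principle, using precisely the inclusion $\{X_n>0\}\subseteq\cE$ and the independence of the uniform point $\fp_n$ from the LPP to average over $\boxx_n$. No gaps.
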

We are now ready to complete the proof of Theorem  \ref{thm:4}. %

\begin{proof}[Proof of Theorem \ref{thm:4}]

  Locally, for points $p\leq q\in \ZZ^2$, let us consider the quantity
  \begin{equation}
    \label{eq:656}
    \slope^*(p,q)= \frac{\psi(q)-\psi(p)}{\phi(q)-\phi(p)}.
  \end{equation}
 Note that for all $j,k\in [\![-\varepsilon n^{1/3}, \varepsilon n^{1/3}]\!]$, we have
  \begin{equation}
    \label{eq:655}
    \slope^*(u_j,v_k)\in (\theta-2\varepsilon,\theta+2\varepsilon)
  \end{equation}
  for all $n$ large enough. Now, for a set $S\subseteq \RR$, $\fluc^S_n$ denote the event that there exists a $t\in [0,1]$, $j,k\in [\![-\varepsilon n^{1/3}, \varepsilon n^{1/3}]\!]$ and two points $p^t,q^t\in \Gamma_{u_j}^{v_k,t}$ satisfying $\phi(q^t)-\phi(p^t)\geq n/10$ along with
  \begin{equation}
    \label{eq:669}
    |\slope^*(p^t,q^t)-\slope^*(u_j,v_k)|\geq \varepsilon.
  \end{equation}
  Note that by using transversal fluctuation estimates for static exponential LPP, there exist constants $C,c$ such that for any fixed $t\in \RR$, we immediately have%
  \begin{equation}
    \label{eq:448}
    \PP(\fluc_n^{\{t\}})\leq Ce^{-cn}.
  \end{equation}
  The goal now is to bound the probability of $\fluc^{[0,1]}_n$. Locally for two points $u\leq v\in \ZZ^2$, we use $\sqr(u,v)$ to denote the lattice square with two of its diagonal endpoints as $u,v$. First, let $\cT_n^{[0,1]}\subseteq [0,1]$ denote the set of times $t$ at which there is at least some vertex $z\in \bigcup_{j,k} \sqr(u_j,v_k)$ whose weight is resampled at time $t$, where the union is over $j,k\in [\![-\varepsilon n^{1/3}, \varepsilon n^{1/3}]\!]$. It is easy to see that $|\bigcup_{j,k} \sqr(u_j,v_k)|\leq Cn^2$ for some constant $C$. Now, since
  \begin{equation}
    \label{eq:668}
    \cT_n^{[0,1]}\sim \mathrm{Poi}\left(|\bigcup_{j,k} \sqr(u_j,v_k)|\right),
  \end{equation}
and by using that for some positive constants $C',C_1,c_1$, the estimate $\PP(\textrm{Poi}(Cn^2)\geq C'n^2)\leq C_1e^{-c_1n^2}$ holds, we have
  \begin{equation}
    \label{eq:425}
    \PP(|\cT_n^{[0,1]}|\geq C'n^{2})\leq C_1e^{-c_1n^2}.
  \end{equation}
  \begin{figure}
    \centering
    \includegraphics[width=0.7\linewidth]{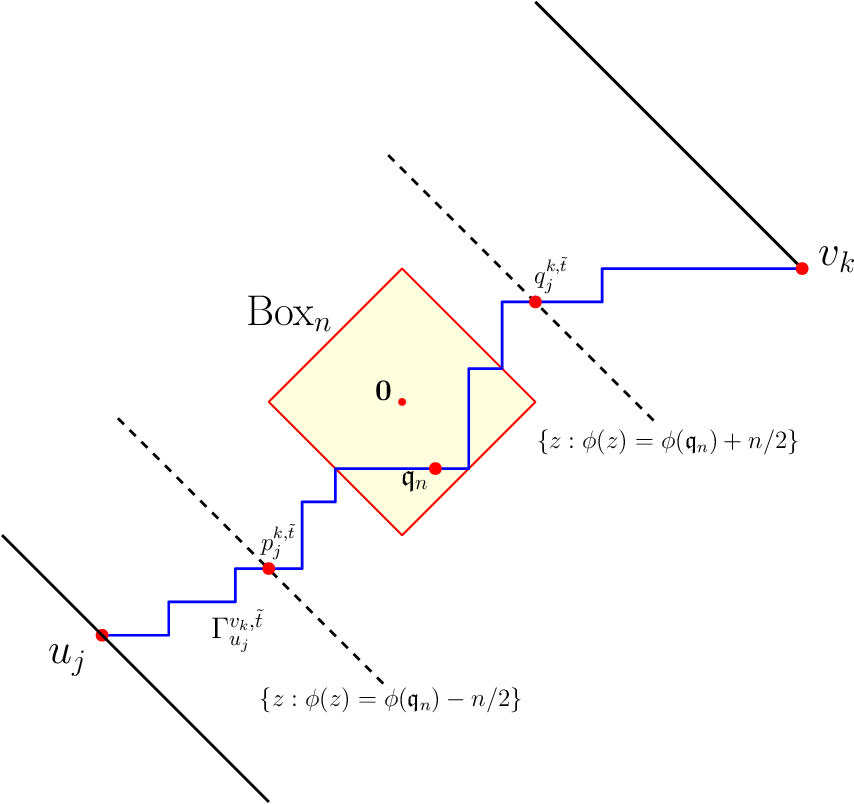}
    \caption{Here, $\mathfrak{q}_n\in \boxx_n$ is a deterministic point with at least a $C(\log n)^{-1}$ probability of there being a geodesic $\Gamma_{u_j}^{v_k,\tilde t}$ going via it for some $|j|,|k|\leq \varepsilon n^{1/3}$ and $\tilde t\in [0,1]$. Additionally, on the event $\fluc_n^{[0,1]}$, we must additionally have $p_j^{k,\tilde t}\in \mathfrak{q}_n+\ell_{-n/2,3\varepsilon}^\theta$ and $q_j^{k,\tilde t}\in \mathfrak{q}_n+\ell_{n/2,3\varepsilon}^\theta$.}
    \label{fig:lbcomplete}
  \end{figure}
  Now, we note that conditional on $\cT_n^{[0,1]}$, for any fixed $t\in \cT_n^{[0,1]}$, the environment $\omega^t$ is simply static exponential LPP. Also, note that by definition, on the set $[0,1]\setminus (\cT_n^{[0,1]})^c$, all the geodesics that we are interested in stay unchanged. Indeed, it can be seen that a.s.\ for every $t\in [0,1]$, there exists a $t'\in \{0\}\cup \cT_n^{[0,1]}$ for which we have $\Gamma_{u_j}^{v_k,t}=\Gamma_{u_{j}}^{v_k,t'}$ for all $j,k\in [\![-\varepsilon n^{1/3}, \varepsilon n^{1/3}]\!]$. As a result, we can write
\begin{align}
  \label{eq:443}
  \PP(\fluc_n^{[0,1]})&\leq \PP(|\cT_n^{[0,1]}|\geq C'n^2)+ \EE[\ind(|\cT_n^{[0,1]}|\leq C'n^2)\sum_{t\in \{0\}\cup \cT_n^{[0,1]}}\PP(\fluc_n^{\{t\}}\lvert \cT_n^{[0,1]})]\nonumber\\
  &\leq C_1e^{-c_1n^2}+ (C'n^2)(Ce^{-cn})\leq C_2e^{-c_2n},
\end{align}
where we have used \eqref{eq:448} to obtain the last line above. Now, we consider the event $F_n$ given by
\begin{equation}
  \label{eq:452}
  F_n=\{\fq_n\in \Gamma_{u_j}^{v_k,\tilde t} \textrm{ for some } j,k \in [\![-\varepsilon n^{1/3}, \varepsilon n^{1/3}]\!]\textrm{ and for some } \tilde t\in (0,1)\}\cap \fluc_n^{[0,1]},
\end{equation}
and as a consequence of \eqref{eq:443} and Lemma \ref{lem:10}, we immediately have that for some constant $C$,
\begin{equation}
  \label{eq:460}
  \PP(F_n)\geq C(\log n)^{-1}.
\end{equation}
Now, if we define $p_{j}^{k,t}, q_{j}^{k,t}\in \Gamma_{u_j}^{v_k,t}$ to be the unique points (see Figure \ref{fig:lbcomplete}) which additionally satisfy $\phi(p_j^{k,t}-\fq_n)=-n/2$, $\phi(q_j^{k,t}-\fq_n)=n/2$, then by using \eqref{eq:655} and the condition \eqref{eq:669} in the definition of $\fluc_n^{[0,1]}$, we obtain that on the event $F_n$, we must have $\psi(\fq_n-p_j^{k,\tilde t}),\psi(q_j^{k,\tilde t}-\fq_n)\in ((\theta-3\varepsilon) n/2,(\theta+3\varepsilon) n/2)$. Further, we emphasize that, by definition, we have $\fq_n\in \Gamma_{p_j^{k,\tilde t}}^{q_j^{k,\tilde t},\tilde t}$. 

As a result of this, in the notation of Theorem \ref{thm:4}, we have
\begin{equation}
  \label{eq:454}
  F_n\subseteq \{\exists \tilde t\in [0,1] \textrm{ and points }  p\in \fq_n+\ell^\theta_{-n/2,3\varepsilon}, q\in \fq_n+\ell^\theta_{n/2,3\varepsilon} \textrm{ with } \fq_n\in \Gamma_{p}^{q,\tilde t}\}.
\end{equation}
Finally, by using the above, we can write
\begin{align}
  \label{eq:459}
  &\PP(\exists \tilde t\in [0,1]\textrm{ and points } p\in \ell_{-n/2,3\varepsilon}^\theta, q\in \ell_{n/2,3\varepsilon}^\theta \textrm { with } \0\in \Gamma_{p}^{q,\tilde t})\nonumber\\
  &=\PP(\exists \tilde t\in [0,1] \textrm{ and points }  p\in \fq_n+\ell_{-n/2,3\varepsilon}^\theta, q\in \fq_n+\ell_{n/2,3\varepsilon}^\theta \textrm{ with } \fq_n\in \Gamma_{p}^{q,\tilde t})\geq C(\log n)^{-1},
\end{align}
where the first inequality follows by the translation invariance of exponential LPP and the second inequality follows by \eqref{eq:454} and \eqref{eq:460}. Replacing $\varepsilon$ by $\varepsilon/3$ and $n$ by $2n$ now completes the proof.

\end{proof}

\printbibliography
\end{document}